\newtheorem{theorem}{Theorem}[section]
\newtheorem{corollary}{Corollary}[section]
\newtheorem{proposition}{Proposition}[section]
\newtheorem{cex}{Counter Example}
\newtheorem{lemma}{Lemma}[section]
\newtheorem{definition}{Definition}[section]
\newtheorem{remark}{Remark}[section]
\newenvironment{proof}[1][Proof]{\textbf{#1.} }{\ \rule{0.5em}{0.5em}}
\begin{document}
	
	\title{A groupoid approach to the study of fuzzy topological spaces}
	\author{
		Anjeza Krakulli \\
		Universiteti Aleksand\"er Moisiu \\
		Fakulteti i Teknologjis\"e dhe Informacionit \\
		Departamenti i Matematik\"es, Durr\"es \\
		anjezakrakulli@uamd.edu.al \\
		Elton Pasku \\
		Universiteti i Tiran\"es \\
		Fakulteti i Shkencave Natyrore \\
		Departamenti i Matematik\"es, Tiran\"e \\
		elton.pasku@fshn.edu.al \\
	}
	
	\date{}
	
	\maketitle
	
\begin{abstract}
The definition of the complement of a fuzzy subset is algebraic in nature and when it is used in the context of fuzzy topological spaces it does not share any similarity with the usual property of topological spaces that the complement of an open subset is closed. To tackle this inconsistency, we associate to any fuzzy topological space a topological space and use its fundamental groupoid equipped with the Lasso topology to give a topological characterization for the complementation of fuzzy subsets.\\\\
\textbf{Key words and phrases}: Fuzzy topological space, fundamental groupoid of a space, Lasso topology on the fundamental groupoid.\\\\
\textbf{AMS Mathematics  Subject Classification $(2020)$}: 22A22, 55U40, 03E72, 54A40.
\end{abstract}

	\section{Introduction}
	
	The following is the fuzzy version of the definition of a topological space and appears in \cite{FTS}.
\begin{definition} \label{fts}
A fuzzy topology is a family $\mathcal{T}$ of fuzzy sets in $X$ which
	satisfies the following conditions:
	\begin{itemize}
		\item [(i)] $\emptyset, X \in \mathcal{T}$,
		\item[(ii)] If $A,B \in \mathcal{T}$, then $A \wedge B \in \mathcal{T}$,
		\item[(iii)] For every family $I$ and every $A_{i} \in \mathcal{T}$ with $i \in I$, $\underset{i \in I}{\vee} A_{i} \in \mathcal{T}$.
	\end{itemize}
\end{definition}
This definition generalizes that of a topological space since every topological space $(X, \tau)$ can be regarded as a fuzzy topological space in the above sense. Indeed, for every $T \in \tau$, the corresponding indicator map $I_{T}$ on $X$ is a fuzzy subset of $X$ in the sense of \cite{Z}, and the collection $\mathcal{T}$ of all such $I_{T}$ is a fuzzy topology on $X$. The indicator map $I_{\theta}$ of the empty map $\theta$ is the constant map $\emptyset$ at zero, the indicator map $I_{X}$ is the constant map $X$ at 1, and conditions (ii) and (iii) of definition \ref{fts} are easily verified. Of course there are examples of fuzzy topological spaces which do not arise from topological spaces in the above manner. One such example has been provided by Lowen in \cite{Lowen} who explores some interesting relationships between topological and fuzzy topological spaces. Lowen associates to any fuzzy topological space $(X, \mathcal{T})$ the topological space $(X, \iota(\mathcal{T}))$, where $\iota(\mathcal{T})$ is the initial topology on $X$ for the family of maps in $\mathcal{T}$ and the topological space $I_{r}$. In the reverse direction, he associates to any topological space $(X ,\tau)$ the fuzzy topological space $(X,\mathcal{C}(X, I_{r}))$ where $\mathcal{C}(X, I_{r})$ is the set of all continuous maps from $(X ,\tau)$ to $I_{r}$. It is obvious that the maps of $\mathcal{C}(X, I_{r})$ are not indicator maps in general, therefore $(X,\mathcal{C}(X, I_{r}))$ serves as a counterexample making thus the difference between topological and fuzzy topological spaces apparent. It should be mentioned that there is another, not so obvious reason, which distinguishes the two notions and has to do with the definition of the complement of a fuzzy subset $F$ of a set $X$ as being the map $1-F$. This definition, which is used widely in the context of fuzzy topological spaces too, is associated with the following pathology. Given any fuzzy topological space $(X, \mathcal{T})$, $T \in \mathcal{T}$ a fuzzy open subset of $X$ and $T^{-1}(\gamma,1]$ any sub bases member for the topological space $(X, \iota(\mathcal{T}))$, the complement of $T^{-1}(\gamma,1]$ in $(X, \iota(\mathcal{T}))$ is in general different than $(1-T)^{-1}(\gamma,1]$ even if $1-T$ is fuzzy open as the following example shows. Let for instance $X$ be any nonempty set and $\mathcal{T}=\{1/3,2/3,1, \emptyset\}$ consisting of four constant maps. If $\gamma=0$ and $T=1/3$, then $1-T=2/3 \in \mathcal{T}$ and $T^{-1}(\gamma, 1]=X=(1-T)^{-1}(\gamma, 1]$, but $X \setminus (T^{-1}(\gamma, 1])=\theta \neq (1-T)^{-1}(\gamma, 1]$. It is now natural to ask the following question. If fuzzy complementary subsets $F$ and $1-F$ in $(X, \mathcal{T})$ do not give rise in general to set theoretic complementary subsets in $(X, \iota(\mathcal{T}))$, then is there any other space related to $(X, \mathcal{T})$ in which we can interpret in algebraic or topological terms the process of taking complements of fuzzy subsets in $(X, \mathcal{T})$? The main scope of this paper is to address this question. We leave the details for the definition of that space for the next section, but we have to mention here that our approach is heavily based on the notion of the topological groupoid $\pi^{l}Y$ of a given space $Y$ defined from Pakdaman and Shahini in \cite{LT}. If $Y$ is a topological space, which is assumed to be connected and locally path connected and $\pi Y$ its fundamental groupoid, Pakdaman and Shahini defined a topology on $\pi Y$ provided that a bases for $Y$ is known. With this topology, the groupoid $\pi Y$ now becomes a topological groupoid and is denoted throughout by $\pi^{l}Y$. Pakdaman and Shahini definition is a generalization of the Lasso topology defined in \cite{Br} which makes the fundamental group of a space a topological group. It is shown in \cite{LT} that a bases for $\pi^{l}Y$ ''modulo'' a given bases $\mathcal{U}$ of $Y$ consists of sets $N([\alpha], \mathcal{U}, V, W)$ with $[\alpha] \in \pi(Y, x,y)$ where $V,W \in \mathcal{U}$ are open nighborhoods of $x, y$ respectively, which have elements $[\beta]$ such that
$$\beta \sim \gamma \ast \mu \ast \alpha \ast \mu' \ast \lambda,$$
where $\gamma$ is a path in $V$ with $\gamma(1)=x$, $\mu \in \pi_{1}(\mathcal{U}, x)$, $\mu' \in \pi_{1}(\mathcal{U}, y)$ and $\lambda$ is a path in $W$ such that $\lambda(0)=y$. Here $\pi_{1}(\mathcal{U}, x)$ is the so called the Spanier group with respect to $\mathcal{U}$ and is by definition the subgroup of $\pi_{1}(Y, x)$ consisting of homotopy classes of loops that can be represented by a finite product of the form $\underset{i \in I}{\prod}u_{i}v_{i}u^{-1}_{i}$ where $u_{i}(0)=x$ and $v_{i}$ is a loop which sits inside some $U_{i} \in \mathcal{U}$. We conclude this section by stating that everything which is used in the paper but is not explained in the introduction or latter, can be found in \cite{Brown} and in \cite{LT} if it is related to groupoids, and in \cite{Rotman-T} if it is related to Algebraic Topology. Necessary notions from fuzzy subsets and fuzzy topological spaces are in \cite{FTS}, \cite{Lowen} and \cite{Z}.

	\section{From fuzzy topological spaces to topological spaces}\label{rt}

	Given a fuzzy topology $\mathcal{T}$ on a set $X$, we will define a topology $\Psi^{\ast}(\mathcal{T})$ on the Cartesian product $X \times J$ where $J=[0,1)$ as follows. It is more convenient to define first a map
	$$\Psi^{\ast}: [0,1]^{X} \rightarrow \mathcal{P}(X \times J)$$
	such that for every $T \in [0,1]^{X}$,
	$$\Psi^{\ast}(T)=\{(x,\alpha) \in X \times J| T(x) > \alpha\}.$$
We denote for further use $X_{T}^{\ast}=\{x \in X| T(x) \neq 0\}$ and note that $\pi_{1}(\Psi^{\ast}(T))=X_{T}^{\ast}$, where $\pi_{1}$ is the projection in the first coordinate. 
In the extreme case when $T=\emptyset$ we see that
$$
(x,\alpha) \in \Psi^{\ast}(\emptyset)  \Leftrightarrow 0=\emptyset(x) > \alpha
$$
which is impossible, therefore $ \Psi^{\ast}(\emptyset)=\emptyset$. On the other hand, it is clear that for $T \neq \emptyset$, $\Psi^{\ast}(T)$ can be expressed as a disjoint union of sets as follows
\begin{equation} \label{du}
\Psi^{\ast}(T)=\underset{x \in X_{T}^{\ast}}{\bigcup}\{x\} \times [0,T(x)),
\end{equation}
and that for every fixed $x \in X_{T}^{\ast}$,
	\begin{equation} \label{sup}
		T(x)=\text{sup}\{\alpha \in J| (x,\alpha) \in \Psi^{\ast}(T)\}.
	\end{equation}
Let us show that if we restrict $\Psi^{\ast}$ on $\mathcal{T}$, then the respective family of subsets $\Psi^{\ast}(\mathcal{T})=\{\Psi^{\ast}(T)| T \in \mathcal{T}\}$ constitutes to a topology on $X \times J$. Indeed, as we previously observed, $\Psi^{\ast}(\emptyset)=\emptyset$, and so $\emptyset \in \Psi^{\ast}(\mathcal{T})$. If we take $T=X$, then
	\begin{align*}
		(x,\alpha) \in \Psi^{\ast}(X)  &\Leftrightarrow 1=X(x) > \alpha\\
		& \Leftrightarrow (x,\alpha) \in X \times J,
	\end{align*}
	which shows that $X \times J \in \Psi^{\ast}(\mathcal{T})$.
	Now let $T_{1}, T_{2} \in \mathcal{T}$. We show that
\begin{equation} \label{pinf}
\Psi^{\ast}(T_{1}) \cap \Psi^{\ast}(T_{2})=\Psi^{\ast}(T_{1} \wedge T_{2}) 
\end{equation}
from which we get that $\Psi^{\ast}(T_{1} \wedge T_{2}) \in \Psi^{\ast}(\mathcal{T})$ since $T_{1} \wedge T_{2} \in \mathcal{T}$. This proves that $\Psi^{\ast}(\mathcal{T})$ is closed under finite intersections. Indeed, if $T_{1}\wedge T_{2}=\emptyset$, then there are no pairs $(x,\alpha) \in   \Psi^{\ast}(T_{1}) \cap \Psi^{\ast}(T_{2})$, therefore 
	$$\Psi^{\ast}(T_{1}) \cap \Psi^{\ast}(T_{2})=\emptyset=\Psi^{\ast}(T_{1} \wedge T_{2}). $$
If $T_{1} \wedge T_{2} \neq \emptyset$, then for some $x \in X$, $T_{1}(x) \wedge T_{2}(x) \neq 0$, consequently $\Psi^{\ast}(T_{1}) \cap \Psi^{\ast}(T_{2}) \neq \emptyset$ since for every $0 <\alpha < T_{1}(x) \wedge T_{2}(x)$ we have that $(x,\alpha) \in \Psi^{\ast}(T_{1}) \cap \Psi^{\ast}(T_{2})$. Further, to complete the proof for (\ref{pinf}) in the latter case, we see that
	\begin{align*}
		(x,\alpha) \in \Psi^{\ast}(T_{1}) \cap \Psi^{\ast}(T_{2}) & \Leftrightarrow T_{1}(x) > \alpha \text{ and } T_{2}(x)> \alpha\\
		& \Leftrightarrow (T_{1}(x) \wedge T_{2}(x))>\alpha\\
		& \Leftrightarrow (T_{1} \wedge T_{2})(x) > \alpha\\
		& \Leftrightarrow (x,\alpha) \in \Psi^{\ast}(T_{1} \wedge T_{2}).
	\end{align*}
Finally, let $I$ be any nonempty index set and let $T_{i} \in \mathcal{T}$ for $i \in I$. We show that 
\begin{equation} \label{psup}
\underset{i \in I}{\cup} \Psi^{\ast}(T_{i})=\Psi^{\ast}(\underset{i \in I}{\vee}(T_{i})),
\end{equation}
from which we get that $\Psi^{\ast}(\underset{i \in I}{\vee}(T_{i}))\in \Psi^{\ast}(\mathcal{T})$ since $\underset{i \in I}{\vee}T_{i} \in \mathcal{T}$. This proves that $\Psi^{\ast}(\mathcal{T})$ is closed under arbitrary unions. Indeed, if each $T_{i} =\emptyset$, then the equality is clear. Otherwise, $\underset{i \in I'}{\cup} \Psi^{\ast}(T_{i})$ is nonempty in which case we see that
	\begin{align*}
		(x,\alpha) \in \underset{i \in I}{\cup} \Psi^{\ast}(T_{i}) & \Leftrightarrow \exists i \in I, (x,\alpha) \in \Psi^{\ast}(T_{i})\\
		& \Leftrightarrow \exists i \in I, T_{i}(x) > \alpha\\
		& \Leftrightarrow \underset{i \in I}{\vee}T_{i}(x) > \alpha\\
		& \Leftrightarrow (x,\alpha) \in \Psi^{\ast}(\underset{i \in I}{\vee}T_{i}).
	\end{align*}
The problem with the topology $\Psi^{\ast}(\mathcal{T})$ on $X \times J$ defined in terms of the fuzzy topology $\mathcal{T}$ on $X$, is that the correspondence $\Psi^{\ast}$ does not extends to closed subsets. More specifically, it may happen that for some fuzzy open subset $T \in \mathcal{T}$, we have that $\Psi^{\ast}(1-T) \neq (X \times J) \setminus \Psi^{\ast}(T)$ as the subsequent example shows. In contrast with this, when $T$ is the indicator map $I_{A}$ of some $A \subseteq X$, we have that $\Psi^{\ast}(1-T) = (X \times J) \setminus \Psi^{\ast}(T)$. Indeed, when $A=X$, then $I_{A}=1$ the constant map at 1 and $1-I_{A}=\theta$ the constant map at zero. On the other hand $\Psi^{\ast}(1)=X \times J$ and $\Psi^{\ast}(\theta)=\emptyset$. In this case we see that $\Psi^{\ast}(1-I_{A}) = (X \times J) \setminus \Psi^{\ast}(I_{A})$. The case when $A=\emptyset$ is dual to the above. When $A \subset X$ is a nontrivial subset, we observe that $1-I_{A}=I_{X \setminus A}$ and that 
\begin{align*}
	\Psi^{\ast}(1-I_{A}) & = \{(x,\alpha) \in X \times J: I_{X \setminus A}(x) > \alpha\}\\
	&= (X \setminus A) \times J\\
	&= (X \times J) \setminus (A \times J)\\
	&= (X \times J) \setminus \{(x,\alpha) \in X \times J: I_{A}(x)> \alpha\}\\
	&=(X \times J) \setminus \Psi^{\ast}(I_{A}).
\end{align*}
\begin{cex}
	Let $X$ be any nonempty set and let $\mathcal{T}=\{\theta, 1, \dfrac{1}{3}\}$ be the fuzzy topology on $X$ consisting of only these three constant maps. Letting $T=\dfrac{1}{3}$ we see that
	$$\Psi^{\ast}(T)=\{(x,\alpha) \in X \times J: \dfrac{1}{3}>\alpha\}=X \times [0, 1/3[,$$
	and then
	$$(X \times J) \setminus \Psi^{\ast}(T)= X \times [1/3,1[.$$
	But 
	$$\Psi^{\ast}(1-T)=\{(x,\alpha) \in X \times J: 1-1/3> \alpha\}=X \times [0,2/3[,$$
	which shows that $\Psi^{\ast}(1-T) \neq (X \times J) \setminus \Psi^{\ast}(T)$.
\end{cex}
This oddity that steams from the definition of fuzzy complements which is algebraic in nature, motivates us to construct another, more complex topological space in which taking complements of fuzzy subsets on the one side, is equivalent on the other side with taking inverses. This idea has led us to consider the groupoid of homotopy classes of paths on $X \times J$ but the topology $\Psi^{\ast}(\mathcal{T})$ is to coarse to achieve anything, therefore it is tempting to refine $\Psi^{\ast}(\mathcal{T})$ in a reasonable way. Our approach in doing this is based on Lowen's idea in \cite{Lowen} who associates any given fuzzy topology $\mathcal{T}$ on $X$ with the initial topology on $X$ for the family of maps $T\in \mathcal{T}$ and the topological space $I_{r}$. 
\begin{definition}
	Consider the set $(-1, 1]$ and the family of all left intervals $(\gamma,1]$ with $-1 \leq \gamma <1$. We let $(-1, 1]_{r}$ be the topological space generated by this family.
\end{definition}

\begin{definition}
	For every $T \in \mathcal{T}$ we define $T^{\ast}: X \times J \rightarrow (-1, 1]_{r}$ by $T^{\ast}(x,\alpha)=T(x)-\alpha$. 
\end{definition}

\begin{definition}
	Let $\pi_{2}: X \times J \rightarrow (-1, 1]_{r}$ be the projection in the second coordinate, $\pi_{2}(x,\alpha)=\alpha$.
\end{definition}

\begin{definition}
	Let $\iota(\mathcal{T})$ be the initial topology on $X \times J$ for the family of maps $T^{\ast}$ with $T \in \mathcal{T}$ together with $\pi_{2}$, and the topological space $(-1, 1]_{r}$.
\end{definition}

\begin{lemma}
	The topology $\iota(\mathcal{T})$ is finer than $\Psi^{\ast}(\mathcal{T})$.
\end{lemma}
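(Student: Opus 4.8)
The plan is to exploit the fact that $\iota(\mathcal{T})$ is, by its very definition, an initial topology, hence generated as a subbasis by the preimages $(T^{\ast})^{-1}(U)$ (for $T \in \mathcal{T}$) and $\pi_{2}^{-1}(U)$, where $U$ ranges over the generating left intervals $(\gamma, 1]$ of $(-1,1]_{r}$. To prove that $\iota(\mathcal{T})$ refines $\Psi^{\ast}(\mathcal{T})$ it suffices to show that every generating open set of $\Psi^{\ast}(\mathcal{T})$ — that is, every set of the form $\Psi^{\ast}(T)$ with $T \in \mathcal{T}$ — already belongs to $\iota(\mathcal{T})$. So I only need to exhibit each $\Psi^{\ast}(T)$ as an $\iota(\mathcal{T})$-open set.

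The key computation is to unwind $(T^{\ast})^{-1}((\gamma,1])$ for a fixed $T \in \mathcal{T}$. By definition this is $\{(x,\alpha) \in X \times J \mid \gamma < T(x)-\alpha \leq 1\}$. Since $T(x) \leq 1$ and $\alpha \geq 0$ for every $(x,\alpha) \in X \times J$, the upper constraint $T(x)-\alpha \leq 1$ is automatically satisfied, so this set equals $\{(x,\alpha) \mid T(x)-\alpha > \gamma\}$. The decisive observation is then to specialize to $\gamma = 0$: since $-1 \leq 0 < 1$, the interval $(0,1]$ is one of the generating left intervals of $(-1,1]_{r}$, and
$$(T^{\ast})^{-1}((0,1]) = \{(x,\alpha) \mid T(x)-\alpha > 0\} = \{(x,\alpha) \mid T(x) > \alpha\} = \Psi^{\ast}(T).$$

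Thus each $\Psi^{\ast}(T)$ is exhibited as a subbasic open set of the initial topology $\iota(\mathcal{T})$, which immediately gives $\Psi^{\ast}(\mathcal{T}) \subseteq \iota(\mathcal{T})$ and hence the desired comparison. I do not anticipate any genuine obstacle here; the only points requiring a little care are checking that $T^{\ast}$ indeed takes values in $(-1,1]$ (so that the maps and their preimages are legitimately defined against $(-1,1]_{r}$, which follows from $0 \leq T(x) \leq 1$ and $0 \leq \alpha < 1$ giving $-1 < T(x)-\alpha \leq 1$) and noticing that the admissible choice $\gamma = 0$ reproduces precisely the defining inequality $T(x) > \alpha$ of $\Psi^{\ast}(T)$. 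The projection $\pi_{2}$ plays no role in this direction of the comparison, since $\iota(\mathcal{T})$ contains the $T^{\ast}$-preimages irrespective of it.
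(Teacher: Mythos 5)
Your proof is correct and follows essentially the same route as the paper's: both identify $\Psi^{\ast}(T)$ with the subbasic set $(T^{\ast})^{-1}((0,1])$ of the initial topology $\iota(\mathcal{T})$, which immediately yields the containment $\Psi^{\ast}(\mathcal{T}) \subseteq \iota(\mathcal{T})$. Your additional checks (that $T^{\ast}$ lands in $(-1,1]$ and that the upper bound $T(x)-\alpha \leq 1$ is automatic) are sound but not needed beyond the one-line computation the paper gives.
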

\begin{proof}
	For every $T \in \mathcal{T}$, we see that $(T^{\ast})^{-1}((0,1])=\{(x,\alpha) \in X \times J: T(x)-\alpha \in (0,1]\}=\{(x,\alpha) \in X \times J: T(x)>\alpha \}=\Psi^{\ast}(T)$.
\end{proof}

\begin{definition}
Let $\iota_{X}(\mathcal{T})$ be the initial topology on $X$ for the family of maps $T \in \mathcal{T}$ and the topological space $(-1, 1]_{r}$. 
\end{definition}

\begin{lemma}
Every $T \in \mathcal{T}$ is a continuous map from the space $(X, \iota_{X}(\mathcal{T}))$ to $(-1, 1]_{r}$.
\end{lemma}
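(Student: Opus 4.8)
The plan is to reduce the claim to the defining property of the initial topology: since $\iota_{X}(\mathcal{T})$ is by construction the coarsest topology on $X$ making all the maps $T \in \mathcal{T}$ continuous (viewed as maps into $(-1,1]_{r}$), the continuity of each individual $T$ should follow almost tautologically. Concretely, a sub-basis for $\iota_{X}(\mathcal{T})$ is the family of preimages $T^{-1}(O)$ with $T \in \mathcal{T}$ and $O$ open in $(-1,1]_{r}$, and I would exploit exactly this.

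First I would note that each $T$ genuinely makes sense as a map into $(-1,1]_{r}$, since $T(X) \subseteq [0,1] \subseteq (-1,1]$. Then I would invoke the standard fact that, in order to verify continuity, it is enough to check that the preimages of the members of a generating family of the codomain are open. For $(-1,1]_{r}$ such a generating family is, by its very definition, the collection of left intervals $(\gamma,1]$ with $-1 \le \gamma < 1$.

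Finally, fixing $T \in \mathcal{T}$ and $\gamma$ with $-1 \le \gamma < 1$, I would compute
\[
T^{-1}\big((\gamma,1]\big) = \{x \in X :\ T(x) > \gamma\},
\]
and observe that this is precisely one of the sub-basic open sets used to define $\iota_{X}(\mathcal{T})$, hence open in $(X, \iota_{X}(\mathcal{T}))$. As the preimage of every sub-basic open set of $(-1,1]_{r}$ is thus open, $T$ is continuous. There is no real obstacle here; the only point needing a moment's care is the bookkeeping that it suffices to test continuity on the generating intervals $(\gamma,1]$ rather than on arbitrary open sets, after which the conclusion is immediate from the definition of the initial topology.
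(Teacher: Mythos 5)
Your proposal is correct and takes essentially the same route as the paper: the paper simply invokes the general fact that the initial topology makes the defining maps continuous, while you unpack that fact by checking preimages of the generating intervals $(\gamma,1]$ against the sub-basis of $\iota_{X}(\mathcal{T})$. Nothing is missing; yours is just the same argument spelled out in detail.
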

\begin{proof}
This follows from the general result that the initial topology on a family of maps makes these maps continuous.
\end{proof}

\begin{remark}
	A sub bases for $\iota(\mathcal{T})$ consists in all $(T^{\ast})^{-1}((\gamma,1])$ with $T \in \mathcal{T}$ and $-1 \leq \gamma <1$ together with $(\pi_{2})^{-1}((\gamma,1])$ with $-1 \leq \gamma <1$. To make the second more explicit, we see that $(\pi_{2})^{-1}((\gamma,1])=\{(x,\alpha) \in X \times J: \alpha > \gamma\}$. If we consider now the subspace $X \times \{0\}$ of $(X \times J, \iota(\mathcal{T}))$ where the topology is the one inherited from $\iota(\mathcal{T})$, we see that for every $\gamma < 0$, $(\pi_{2})^{-1}((\gamma,1])=\{(x,0) \in X \times \{0\}: 0 > \gamma\}=X \times \{0\}$, and for every $\gamma \geq 0$, $(\pi_{2})^{-1}((\gamma,1])=\{(x,0) \in X \times \{0\}: 0 > \gamma\}=\emptyset$. Therefore the only sub bases member for $(X \times \{0\}, \iota(\mathcal{T}))$ arising from $\pi_{2}$ is $X \times \{0\}$ itself. Finally, we mention that it follows from the definition $\iota_{X}(\mathcal{T})$ that a sub bases for $\iota_{X}(\mathcal{T})$ consist of all subsets $T^{-1}((\gamma, 1])$ with $T \in \mathcal{T}$ and $-1\leq \gamma < 1$. 
\end{remark}

\begin{lemma} \label{hmm}
	The projection in the first coordinate $\pi_{1}: X \times \{0\} \rightarrow X$ is a homeomorphism of spaces.
\end{lemma}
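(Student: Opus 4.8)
The plan is to show that the restriction $\pi_1|_{X\times\{0\}}$ is a continuous open bijection by comparing the two relevant topologies at the level of subbases and verifying that they correspond under $\pi_1$. First I would note that $\pi_1:(x,0)\mapsto x$ is trivially a bijection from $X\times\{0\}$ onto $X$, so the entire content of the statement lies in the comparison of the subspace topology on $X\times\{0\}$ (inherited from $\iota(\mathcal{T})$) with $\iota_X(\mathcal{T})$ on $X$.

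Next I would describe the subspace topology that $X\times\{0\}$ inherits from $\iota(\mathcal{T})$. By the preceding remark a subbasis for $\iota(\mathcal{T})$ is given by the sets $(T^{\ast})^{-1}((\gamma,1])$ together with the sets $(\pi_2)^{-1}((\gamma,1])$, and intersecting these with $X\times\{0\}$ yields a subbasis for the subspace topology. The remark already records that the $\pi_2$-members contribute only $X\times\{0\}$ itself, so they impose no extra open sets, and hence the subspace topology is generated by the sets $(T^{\ast})^{-1}((\gamma,1])\cap(X\times\{0\})$. Here I would use the computation $(T^{\ast})^{-1}((\gamma,1])\cap(X\times\{0\})=\{(x,0): T(x)-0\in(\gamma,1]\}=\{(x,0): T(x)>\gamma\}$, relying on the fact that $T(x)\le 1$ always, so membership in $(\gamma,1]$ reduces to the single inequality $T(x)>\gamma$.

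Then I would push these subbasic sets forward through $\pi_1$: the image of $\{(x,0): T(x)>\gamma\}$ is exactly $\{x: T(x)>\gamma\}=T^{-1}((\gamma,1])$, which by the remark is precisely a subbasic open set for $\iota_X(\mathcal{T})$. Conversely, every subbasic set $T^{-1}((\gamma,1])$ of $\iota_X(\mathcal{T})$ arises in this way as the $\pi_1$-image of such a set. Thus $\pi_1$ restricts to a bijection carrying a subbasis of the subspace topology onto a subbasis of $\iota_X(\mathcal{T})$, and its inverse does likewise; since a bijection that matches subbases induces equal generated topologies, both $\pi_1$ and $\pi_1^{-1}$ are continuous, which gives the desired homeomorphism.

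I do not expect a serious obstacle here: the essential computation identifying the subspace subbasis has already been carried out in the remark, and the only care required is the bookkeeping that the $\pi_2$-generated subbasic sets collapse to the whole space (so they add nothing to the subspace topology) and that the correspondence $\{(x,0): T(x)>\gamma\}\leftrightarrow T^{-1}((\gamma,1])$ is a genuine bijection of subbases rather than a mere one-sided containment, so that continuity is obtained in both directions.
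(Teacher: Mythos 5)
Your proof is correct and rests on the same core computation as the paper's: the identification $(T^{\ast})^{-1}((\gamma,1])\cap(X\times\{0\})=\{(x,0):T(x)>\gamma\}$, which corresponds under $\pi_{1}$ exactly to the subbasic set $T^{-1}((\gamma,1])$ of $\iota_{X}(\mathcal{T})$. The paper phrases this as two pointwise continuity checks (for $\pi_{1}$ and $\pi_{1}^{-1}$) rather than as a global matching of subbases, and like you it disposes of the $\pi_{2}$-generated subbasic sets by appealing to the preceding remark, so the two arguments are essentially the same.
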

\begin{proof}
Let $(x,0) \in X \times \{0\}$ and $T^{-1}(\gamma,1]$ an open sub bases member containing $x=\pi_{1}(x,0)$. It follows that $T(x)> \gamma$, whence $(x,0) \in (T^{\ast})^{-1}(\gamma, 1]$. For every $(y, 0) \in (T^{\ast})^{-1}(\gamma, 1]$, since $T(y)> \gamma$, we have that $y \in T^{-1}(\gamma,1]$ which proves that $\pi_{1}$ is continuous. Conversely, let $(x,0)=\pi_{1}^{-1}(x)$ and let $(T^{\ast})^{-1}(\gamma, 1]$ be an open sub bases neighbourhood of $(x,0)$. Since $T(x)> \gamma$, we have that $x \in T^{-1}(\gamma,1]$. For every $y \in T^{-1}(\gamma,1]$, $T(y)> \gamma$, hence $\pi_{1}^{-1}(y)=(y,0) \in (T^{\ast})^{-1}(\gamma, 1]$ proving that $\pi_{1}^{-1}$ is also continuous.
\end{proof}

\begin{theorem} \label{defret}
	The space $(X \times J, \iota(\mathcal{T}))$ deformation retracts to $(X \times \{0\}, \iota(\mathcal{T}))$.
\end{theorem}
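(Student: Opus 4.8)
The plan is to exhibit an explicit strong deformation retraction that slides the second coordinate linearly down to $0$. Define
$$H: (X \times J) \times [0,1] \to X \times J, \qquad H((x,\alpha),t) = (x,(1-t)\alpha),$$
where $[0,1]$ carries its usual topology and the domain the product topology. Since $\alpha \in [0,1)$ and $1-t \in [0,1]$ we have $(1-t)\alpha \in [0,1) = J$, so $H$ is well defined into $X \times J$. One checks at once that $H((x,\alpha),0) = (x,\alpha)$, that $H((x,\alpha),1) = (x,0) \in X\times\{0\}$, and that $H((x,0),t) = (x,0)$ for every $t$; hence, once continuity is secured, $H$ is a strong deformation retraction of $(X\times J,\iota(\mathcal{T}))$ onto $(X\times\{0\},\iota(\mathcal{T}))$.

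The whole difficulty lies in the continuity of $H$, and the main obstacle is the unusual target $(-1,1]_r$: its topology is not the Euclidean one, so the naive continuity of subtraction fails and, in particular, one may not subtract a lower semicontinuous function and expect to stay continuous. To handle this I would first reduce the problem. Because $\iota(\mathcal{T})$ is by definition the initial topology for the family $\{T^* : T \in \mathcal{T}\}$ together with $\pi_2$, the universal property of initial topologies shows that $H$ is continuous iff each composite $T^* \circ H$ and $\pi_2 \circ H$ is continuous as a map into $(-1,1]_r$. Computing these, and performing the key algebraic rewriting
$$\pi_2 \circ H((x,\alpha),t) = (1-t)\alpha, \qquad T^* \circ H((x,\alpha),t) = T(x)-(1-t)\alpha = T^*(x,\alpha) + t\alpha,$$
displays each composite as a sum of lower semicontinuous pieces together with a product of nonnegative lower semicontinuous pieces, and never as a subtraction of a lower semicontinuous function; this is exactly the form that survives the passage to $(-1,1]_r$.

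Finally I would note that a map into $(-1,1]_r$ is continuous precisely when it is lower semicontinuous, since the generating open sets $(\gamma,1]$ pull back to the sets $\{\,\cdot > \gamma\,\}$. The building blocks are all lower semicontinuous: $T^*$ and $\pi_2$ are continuous into $(-1,1]_r$ by the very definition of $\iota(\mathcal{T})$, while $t$ and $1-t$ are continuous in the usual sense, and pulling all of these back along the continuous projections of the product keeps them lower semicontinuous on $(X\times J)\times[0,1]$. I would then invoke two standard closure properties, namely that a sum of lower semicontinuous functions is lower semicontinuous, and that a product of \emph{nonnegative} lower semicontinuous functions is lower semicontinuous (the latter via the neighbourhood argument: if $f(z_0)g(z_0) > c \ge 0$, choose $a < f(z_0)$ and $b < g(z_0)$ with $ab > c$ and use the open set $\{f>a\}\cap\{g>b\}$). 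Since $\alpha$, $t$ and $1-t$ are nonnegative, both $(1-t)\alpha$ and $t\alpha$ are such products, hence lower semicontinuous; adding the lower semicontinuous $T^*$ then gives that $T^*\circ H = T^* + t\alpha$ is lower semicontinuous, and $\pi_2 \circ H = (1-t)\alpha$ is as well. This establishes the continuity of $H$ and completes the proof.
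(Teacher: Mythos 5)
Your proof is correct, and it takes a genuinely different route from the paper's. You use the same homotopy $H((x,\alpha),t)=(x,(1-t)\alpha)$, but where the paper proves continuity by hand --- a three-way case analysis on $t$ ($t=0$, $0<t<1$, $t=1$), each case requiring explicit $\varepsilon$--$\delta$ constructions of product neighbourhoods tested against both kinds of subbasis members $(T^{\ast})^{-1}((\gamma,1])$ and $\pi_{2}^{-1}((\gamma,1])$ --- you instead exploit the fact that $\iota(\mathcal{T})$ is \emph{defined} as an initial topology: by its universal property, $H$ is continuous iff $T^{\ast}\circ H$ and $\pi_{2}\circ H$ are continuous into $(-1,1]_{r}$, and continuity into $(-1,1]_{r}$ is precisely lower semicontinuity, since the generating sets pull back to $\{\,\cdot>\gamma\,\}$. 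The decisive step, which has no counterpart in the paper, is the algebraic rewriting $T^{\ast}(H((x,\alpha),t))=T(x)-(1-t)\alpha=T^{\ast}(x,\alpha)+t\alpha$, which expresses the composite as a sum of a l.s.c.\ function and a product of \emph{nonnegative} l.s.c.\ functions, never as a subtraction of one; the standard closure properties of lower semicontinuity (sums, and products of nonnegative functions, whose proof you correctly sketch) then finish the argument, with $\pi_{2}\circ H=(1-t)\alpha$ handled the same way. Your approach buys brevity, robustness against computational slips, and transparency about what really makes the homotopy work --- it is somewhat surprising that the paper defines $\iota(\mathcal{T})$ as an initial topology yet never invokes its universal property; the paper's approach buys self-containedness, needing nothing beyond the definition of a subbasis, at the cost of roughly a page and a half of delicate estimates. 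Both arguments in fact establish the stronger conclusion that this is a strong deformation retraction, since $X\times\{0\}$ is fixed pointwise throughout.
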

\begin{proof}
	Let $H: I \times (X \times J) \rightarrow X \times J$ defined by $H(t,(x,\alpha))=(x, (1-t)\alpha)$. First we note that $H(t,(x,0))=(x,0)$ for every $(x,0) \in X \times \{0\}$, $H(0,(x,\alpha))=(x,\alpha)$ for all $(x,\alpha) \in X \times J$ and $H(1,(x,\alpha))=(x,0)$ for all $(x,\alpha) \in X \times J$. Now we prove that $H$ is continuous by testing it on the sub bases described in the previous remark. We distinguish between the following cases for $t$.\\
	(i) $t=1$. Let $\pi^{-1}_{2}((\gamma,1])$ with $-1\leq \gamma <1$ be an open neighbourhood of $(x,0)=(x,(1-1)\alpha)$. It follows that $\gamma <0$. If $\alpha=0$, then for some appropriately chosen $\varepsilon>0$, $(1-\varepsilon,1] \times \pi^{-1}_{2}((\gamma,1])$ is an open neighbourhood of $(1,(x,0))$. For every $(1-\delta, (y, \beta)) \in (1-\varepsilon,1] \times \pi^{-1}_{2}((\gamma,1])$ we have that $H((1-\delta, (y, \beta)))=(y, \delta \beta) \in \pi^{-1}_{2}((\gamma,1])$ since $\delta \beta \geq 0> \gamma$. In the case when $\alpha > 0$, we can chose $\varepsilon>0$ such that $1-\varepsilon>0$ and $\alpha -\varepsilon>0$. It follows that an open neighbourhood of $(1,(x,\alpha))$ is $(1-\varepsilon,1] \times \pi^{-1}_{2}((\alpha -\varepsilon,1])$. For every $(1-\delta, (y,\beta)) \in (1-\varepsilon,1] \times \pi^{-1}_{2}((\alpha -\varepsilon,1])$ we have that $H((1-\delta, (y,\beta)))=(y, \delta \beta) \in \pi^{-1}_{2}((\gamma,1])$ since $\delta \beta\geq  \delta (\alpha -\varepsilon)\geq 0> \gamma$. In the case when $(T^{\ast})^{-1}((\gamma,1])$ with $-1\leq \gamma <1$ is an open neighbourhood of $(x,0)=(x,(1-1)\alpha)$, we have that $T(x)>\gamma$. If $\alpha=0$, then an open neighbourhood of $(1,(x,0))$ is $(1-\varepsilon,1] \times (T^{\ast})^{-1}((\gamma,1])$ where $0<1-\varepsilon< 1$. For every $(1-\delta, (y,\beta)) \in (1-\varepsilon,1] \times (T^{\ast})^{-1}((\gamma,1])$ we have that $H(1-\delta, (y,\beta))=(y, \delta \beta) \in (T^{\ast})^{-1}((\gamma,1])$. Indeed, 
	\begin{align*}
		T(y)&> \gamma +\beta && (\text{from the assumption that } (y,\beta) \in (T^{\ast})^{-1}((\gamma,1]))\\
		&\geq \gamma+ \delta \beta && (\text{since } \delta <\varepsilon),
	\end{align*}
	which means that $T(y)-\delta \beta \in (\gamma,1]$, or equivalently that $H(1-\delta, (y,\beta))=(y, \delta \beta) \in (T^{\ast})^{-1}((\gamma,1])$. In case $\alpha > 0$, we can chose $0< \varepsilon<1$ such that $T(x)-\alpha> \gamma -(\alpha -2 \varepsilon)$ and $\alpha -\varepsilon>0$. An open neighbourhood of $(1,(x,\alpha))$ now is $(1-\varepsilon,1] \times ((T^{\ast})^{-1}((\gamma -(\alpha -2 \varepsilon)] \cap \pi^{-1}_{2}(\alpha -\varepsilon,1])$. Let $(1-\delta, (y,\beta)) \in (1-\varepsilon,1] \times ((T^{\ast})^{-1}((\gamma -(\alpha -2 \varepsilon)] \cap \pi^{-1}_{2}(\alpha -\varepsilon,1])$, then $T(y)-\beta> \gamma -(\alpha -2 \varepsilon)$. It follows that for every $1-\delta \in (1-\varepsilon,1]$ we have
	\begin{align*}
		T(y)-(1-(1-\delta))\beta&= T(y)-\beta +(1-\delta)\beta\\
		&> \gamma -(\alpha -2 \varepsilon)+(1-\delta)(\alpha -\varepsilon)\\
		&= \gamma - (\alpha -\varepsilon) + \varepsilon +(\alpha -\varepsilon)-\delta (\alpha -\varepsilon)\\
		&= \gamma + \varepsilon -\delta (\alpha -\varepsilon)\\
		&\geq \gamma && (\text{since } \varepsilon -\delta (\alpha -\varepsilon)\geq 0)
	\end{align*}
	(ii) $0< t <1$. As in the previous case, an open neighbourhood of $(x, (1-t)\alpha)$ can be of two kinds. The first one is $\pi^{-1}_{2}((\gamma,1])$ with $-1\leq \gamma <1$. Then, $\alpha> \gamma +t \alpha$. We can find $\varepsilon> 0$ such that $(t-\varepsilon, t+\varepsilon) \subset [0,1]$ and $\alpha> \gamma +(t+\varepsilon) \alpha$. It follows that $(x,\alpha) \in \pi^{-1}_{2}((\gamma +(t+\varepsilon) \alpha,1])$, and that an open neighbourhood of $(t, (x,\alpha))$ is $(t-\varepsilon, t+\varepsilon)\times \pi^{-1}_{2}((\gamma +(t+\varepsilon) \alpha,1])$. We prove that $H\left((t-\varepsilon, t+\varepsilon)\times \pi^{-1}_{2}((\gamma +(t+\varepsilon) \alpha,1])\right) \subseteq \pi^{-1}_{2}((\gamma,1])$. Let $(t+\delta, (y,\beta)) \in (t-\varepsilon, t+\varepsilon)\times \pi^{-1}_{2}((\gamma +(t+\varepsilon) \alpha,1])$, hence $|\delta|< \varepsilon$ and $\beta> \gamma +(t+\varepsilon) \alpha$. For $H(t+\delta, (y,\beta))=(y, (1-(t+\delta))\beta)$ we see that
	\begin{align*}
		(1-(t+\delta))\beta & \geq (1-(t+\varepsilon))\beta\\
		&> (1-(t+\varepsilon))(\gamma +(t+\varepsilon)\alpha)\\
		&= \gamma + (t+\varepsilon)(\alpha -\gamma -(t+\varepsilon)\alpha)\\
		& \geq \gamma && (\text{since } \alpha >\gamma +(t+\varepsilon)\alpha).
	\end{align*}
	This proves that $H(t+\delta, (y,\beta)) \in \pi^{-1}_{2}((\gamma,1])$. The second kind of an open neighbourhood of $(x, (1-t)\alpha)$ is $(T^{\ast})^{-1}((\gamma,1])$ with $-1\leq \gamma <1$. In this case $T(x)-\alpha> \gamma -\alpha t$. We can chose $\varepsilon>0$ such that 
	$(t-\varepsilon, t+\varepsilon) \subset[0,1]$ and that $$T(x)-\alpha>  \gamma -\alpha t+(\varepsilon t+\varepsilon)=\gamma -(\alpha-(\varepsilon+\dfrac{\varepsilon}{t})) t.$$ 
Let $\mu=\text{max}\{-1, \gamma -(\alpha-(\varepsilon+\dfrac{\varepsilon}{t})) t\}$.
	It follows that $(x,\alpha) \in (T^{\ast})^{-1}((\mu,1]) \cap \pi^{-1}_{2}((\alpha -\varepsilon,1])$. Thus an open neighborhood of $(t,(x,\alpha))$ is $(t-\varepsilon, t+\varepsilon) \times ((T^{\ast})^{-1}((\mu,1]) \cap \pi^{-1}_{2}((\alpha -\varepsilon,1]))$. We prove that $H((t-\varepsilon, t+\varepsilon) \times ((T^{\ast})^{-1}((\mu,1]) \cap \pi^{-1}_{2}((\alpha -\varepsilon,1]))) \subseteq (T^{\ast})^{-1}((\gamma,1])$. Indeed, for every $(t+\delta, (y,\beta)) \in (t-\varepsilon, t+\varepsilon) \times ((T^{\ast})^{-1}((\mu,1]) \cap \pi^{-1}_{2}((\alpha -\varepsilon,1]))$, we have that $|\delta|< \varepsilon$, $T(y)-\beta>\mu\ \geq \gamma -(\alpha -(\varepsilon +\dfrac{\varepsilon}{t}))t$ and $\beta> \alpha -\varepsilon$. It follows that 
	\begin{align*}
		T(y)-(1-(t+\delta))\beta & =T(y)-\beta +t \beta + \delta \beta\\
		& > \gamma -(\alpha -(\varepsilon +\dfrac{\varepsilon}{t}))t +t (\alpha -\varepsilon)+  \delta \beta\\
		&=\gamma -(\alpha -\varepsilon)t +\varepsilon +t (\alpha -\varepsilon)+  \delta \beta\\
		&=\gamma +\varepsilon+\delta \beta\\
		&\geq \gamma && (\text{since } \varepsilon>\beta \varepsilon> \beta |\delta |,)
	\end{align*}
	which show that $H((t+\delta, (y,\beta))) \in (T^{\ast})^{-1}((\gamma,1])$.\\
(iii) $t=0$. Let $\pi_{2}^{-1}((\gamma,1])$ be an open neighbourhood of $H(0,(x,\alpha))=(x,\alpha)$, hence $\alpha> \gamma$. When $\alpha\neq 0$, we can find $0<\varepsilon<1$ such that $\alpha > \dfrac{1}{1-\varepsilon}\gamma$. We can in fact chose $0< \varepsilon <1-\dfrac{\gamma}{\alpha}$. It is clear that $(0,(x,\alpha)) \in [0,\varepsilon) \times \pi_{2}^{-1}((\dfrac{1}{1-\varepsilon}\gamma,1])$. Let now $(\delta, (y,\beta)) \in [0,\varepsilon) \times \pi_{2}^{-1}((\dfrac{1}{1-\varepsilon}\gamma,1])$, hence $\delta< \varepsilon$ and $\beta> \dfrac{1}{1-\varepsilon}\gamma$. It follows that $H((\delta, (y,\beta)))=(y, (1-\delta)\beta) \in \pi_{2}^{-1}((\gamma,1])$ since
	$$
	\beta> \dfrac{1}{1-\varepsilon}\gamma\geq  \dfrac{1}{1-\delta}\gamma,
	$$
	which implies that $(1-\delta)\beta> \gamma$. Now assume that $\alpha=0$, then $H(0, (x,0))=(x,0) \in \pi_{2}^{-1}((\gamma,1])$. This means that $\gamma<0$. For every $0< \varepsilon<1$, $(0,(x,0)) \in [0,\varepsilon) \times \pi_{2}^{-1}((\gamma,1])$. Let $(\delta, (y,\beta)) \in [0,\varepsilon) \times \pi_{2}^{-1}((\gamma,1])$, then $\delta< \varepsilon$ and $\beta> \gamma$. It follows that $(1-\delta)\beta> (1-\delta)\gamma \geq \gamma$, therefore $H((\delta, (y,\beta)))=(y, (1-\delta)\beta) \in \pi_{2}^{-1}((\gamma,1])$. Finally assume that $(0,(x,\alpha)) \in (T^{\ast})^{-1}((\gamma, 1])$. Since $T(x)-\alpha > \gamma$, then for every $0< \varepsilon<1$, $[0,\varepsilon) \times (T^{\ast})^{-1}((\gamma, 1])$ is an open neighbourhood of $(0, (x,\alpha))$. For every $(\delta, (y,\beta)) \in [0,\varepsilon) \times (T^{\ast})^{-1}((\gamma, 1])$ we see that 
	$$T(y)-(1-\delta)\beta \geq T(y)-\beta> \gamma.$$
	This proves that $H((\delta, (y,\beta)))=(y, (1-\delta)\beta) \in (T^{\ast})^{-1}((\gamma, 1])$, concluding the proof.
\end{proof}

\section{The groupoid approach}

In this section we will consider the topological groupoid $\pi^{l}(X \times J)$ of homotopy classes of paths on $X \times J$ equipped with the Lasso topology. The topological groupoid $\pi^{l}(Y)$ whose elements are homotopy classes of paths on $Y$ has been defined in \cite{LT} under the assumption that $Y$ is path connected (p.c) and locally path connected (l.p.c), therefore the definition of our $\pi^{l}(X \times J)$ would require $X \times J$ to be p.c and l.p.c. It turns out from Proposition \ref{pc+lpc} that in order for $X \times J$ to be p.c and l.p.c., it is enough to assume that $(X, \iota_{X}(\mathcal{T}))$ is p.c and l.p.c. The first part of this section is devoted to the proof of Proposition \ref{pc+lpc}. In order to make the proof easy to follow we have proved in advance a number of technical results one of which is Lemma \ref{s-i} that uses Remark \ref{fhrem}, a result which belongs to the second part of the section. The second part contains also a few other technical results that precede the proof of Theorem \ref{dfrt} which states that $\pi^{l}(X \times J)$ deformation retract onto $\pi^{l}(X \times \{0\})$. The result of Theorem \ref{dfrt} will be essential in section \ref{fc}.

In what follows we will let $\sigma: X \times J \rightarrow X \times \{0\}$ be the restriction of $H$ on $\{1\} \times (X \times J)$. Since $H$ is a deformation retraction, $\sigma$ has to be a homotopy equivalence. Here the topology in $X \times \{0\}$ is the one derived from that in $X \times J$.

\begin{lemma} \label{prsc}
	For every path $p$ in $X \times \{0\}$ and every fixed $\alpha \in J$, we let $\prescript {}{\alpha} p: I \rightarrow X \times J$ defined by $\prescript {}{\alpha}p(u)=(\pi_{1}(p(u)), \alpha)$ where $\pi_{1}: X \times \{0\} \rightarrow X$ is the homeomorphism of Lemma \ref{hmm}. The map $\prescript {}{\alpha} p$ is a path in $X \times J$.
\end{lemma}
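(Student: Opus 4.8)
The plan is to verify that $\prescript{}{\alpha}p$ is continuous by exploiting the fact that $\iota(\mathcal{T})$ is an initial topology. By definition, $\iota(\mathcal{T})$ is the initial topology on $X \times J$ for the family $\{T^{\ast} : T \in \mathcal{T}\}$ together with $\pi_{2}$, so a map $I \to (X \times J, \iota(\mathcal{T}))$ is continuous precisely when its composite with each $T^{\ast}$ and with $\pi_{2}$ is continuous into $(-1,1]_{r}$. I would therefore reduce the whole problem to checking these two kinds of composites for $\prescript{}{\alpha}p$; this is equivalent to testing the subbasis described in the earlier remark.

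The composite $\pi_{2} \circ \prescript{}{\alpha}p$ sends $u$ to the second coordinate of $(\pi_{1}(p(u)), \alpha)$, namely the constant value $\alpha$, and a constant map is continuous, so this case is immediate. The substantive case is the composite $T^{\ast} \circ \prescript{}{\alpha}p$, which by the definition of $T^{\ast}$ is the map $u \mapsto T(\pi_{1}(p(u))) - \alpha$. To handle it, I would first observe that $g := T \circ \pi_{1} \circ p \colon I \to (-1,1]_{r}$ is continuous, being the composite of the path $p$, the homeomorphism $\pi_{1} \colon X \times \{0\} \to (X, \iota_{X}(\mathcal{T}))$ of Lemma \ref{hmm}, and the map $T$, which is continuous on $(X, \iota_{X}(\mathcal{T}))$ by the lemma asserting continuity of each $T \in \mathcal{T}$. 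It then remains to pass from $g$ to $g - \alpha$: for any subbasic open set $(\delta,1]$ of $(-1,1]_{r}$ with $-1 \le \delta < 1$ one has
$$(T^{\ast} \circ \prescript{}{\alpha}p)^{-1}((\delta,1]) = \{u \in I : g(u) > \delta + \alpha\} = g^{-1}((\delta + \alpha, 1]),$$
and since $\delta + \alpha \ge -1$ the shifted interval $(\delta + \alpha, 1]$ is again open in $(-1,1]_{r}$, so its $g$-preimage is open in $I$. Combining the two cases yields continuity of $\prescript{}{\alpha}p$, which is exactly the assertion that it is a path in $X \times J$.

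I do not anticipate a genuine obstacle, as the statement is essentially a continuity check riding on the universal property of the initial topology. The only point demanding care is the index shift $\delta \mapsto \delta + \alpha$ in the final step: one must confirm that $(\delta + \alpha, 1]$ is still open in $(-1,1]_{r}$, which amounts to noting that it is a generating interval when $\delta + \alpha < 1$ and degenerates to the empty set when $\delta + \alpha \ge 1$, both of which are open. One should also keep track of the fact that $T$'s continuity is with respect to $\iota_{X}(\mathcal{T})$ and that $\pi_{1}$ is precisely the homeomorphism linking $(X \times \{0\}, \iota(\mathcal{T}))$ with $(X, \iota_{X}(\mathcal{T}))$, so that the composition $g$ is legitimately continuous.
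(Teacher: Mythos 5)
Your proposal is correct and follows essentially the same route as the paper: both verify continuity by testing against the defining subbasis of $\iota(\mathcal{T})$, reducing the $T^{\ast}$-case to continuity of $T\circ\pi_{1}\circ p$ (via Lemma \ref{hmm} and the continuity of $T$ on $(X,\iota_{X}(\mathcal{T}))$) together with the shift $\gamma\mapsto\gamma+\alpha$, and disposing of the $\pi_{2}$-case by constancy of the second coordinate. Your phrasing through the universal property of the initial topology is just a cleaner packaging of the paper's pointwise subbasis check; the underlying computation is identical.
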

\begin{proof}
	If $(T^{\ast})^{-1}(\gamma,1]$ is a sub bases member containing $(\pi_{1}(p(u)), \alpha)$, then $(T \pi_{1} p)(u)> \alpha +\gamma$. But the map $T: X \rightarrow I$ is continuous, therefore $(T \pi_{1} p)^{-1}((\alpha + \gamma, 1]  \cap I)$ is an open subset of $I$ and it contains $u$. For every $v \in (T \pi_{1} p)^{-1}((\alpha + \gamma, 1] \cap I)$, $(T \pi_{1} p)(v)> \alpha + \gamma$, which means that $\prescript {}{\alpha} p(v)=(\pi_{1}(p(v)), \alpha) \in (T^{\ast})^{-1}(\gamma,1]$. If now a sub bases member containing $(\pi_{1}(p(u)), \alpha)$ is $\pi_{2}^{-1}(\gamma,1]$, then $\alpha> \gamma$. In this case as an open neighbourhood of $\alpha$ we can take the whole of $I$ since for every $v \in I$, $\prescript {}{\alpha}p(v)=(\pi_{1}(p(v)), \alpha) \in \pi_{2}^{-1}(\gamma,1]$. It follows that $\prescript {}{\alpha}p$ is continuous.
\end{proof}

\begin{lemma} \label{rho-p}
	For every finite number of open subsets of the form $(T_{i}^{\ast})^{-1}(\gamma_{i},1]$, we have that $\sigma(\underset{i }{\cap} (T_{i}^{\ast})^{-1}(\gamma_{i},1])=\underset{i}{\cap}\sigma((T_{i}^{\ast})^{-1}(\gamma_{i},1])$.
\end{lemma}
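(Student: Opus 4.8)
The plan is to reduce the claimed set equality to its one nontrivial inclusion and then exploit the special ``downward-closed'' shape of the sub-basic sets $(T_i^{\ast})^{-1}(\gamma_i,1]$ in the $\alpha$-coordinate. First I would record the explicit form of $\sigma$: since $\sigma$ is the restriction of $H$ to $\{1\}\times(X\times J)$ and $H(1,(x,\alpha))=(x,0)$, we have $\sigma(x,\alpha)=(x,0)$ for every $(x,\alpha)$. In particular $\sigma$ fixes $X\times\{0\}$ pointwise, and for any $A\subseteq X\times J$ one has $\sigma(A)=\{(x,0): \exists\,\alpha\in J,\ (x,\alpha)\in A\}$.

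The inclusion $\sigma(\bigcap_i A_i)\subseteq\bigcap_i\sigma(A_i)$, where I abbreviate $A_i=(T_i^{\ast})^{-1}(\gamma_i,1]$, is the general fact that images preserve intersections in this one direction, so it requires no hypothesis on the $A_i$. The entire content of the lemma therefore lies in the reverse inclusion, and this is precisely where the non-injectivity of $\sigma$ would ordinarily obstruct equality of images. I expect this to be the main (indeed the only) obstacle, and the observation that removes it is that each $A_i$ is closed downward along columns: unwinding $T_i^{\ast}(x,\alpha)=T_i(x)-\alpha$, membership $(x,\alpha)\in A_i$ amounts to $T_i(x)-\alpha>\gamma_i$, i.e. $\alpha<T_i(x)-\gamma_i$; hence whenever $(x,\alpha)\in A_i$ and $0\le\alpha'\le\alpha$ we also have $(x,\alpha')\in A_i$, and in particular $(x,0)\in A_i$.

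With this in hand the reverse inclusion is immediate: if $(x,0)\in\bigcap_i\sigma(A_i)$, then for each $i$ there is some $\alpha_i$ with $(x,\alpha_i)\in A_i$, and downward closure yields $(x,0)\in A_i$ for every $i$; thus $(x,0)\in\bigcap_i A_i$, and since $\sigma(x,0)=(x,0)$ we conclude $(x,0)\in\sigma(\bigcap_i A_i)$. Alternatively one can package the argument cleanly by first proving that for any downward-closed $A$ one has $\sigma(A)=A\cap(X\times\{0\})$ (membership of $(x,0)$ in $A$ being equivalent to the column over $x$ being nonempty), then noting that the class of downward-closed sets is stable under finite intersection, and finally computing both sides of the asserted identity as $\bigl(\bigcap_i A_i\bigr)\cap(X\times\{0\})$. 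Either route completes the proof, and notably no continuity or $\varepsilon$-estimate is needed beyond the explicit description of $\sigma$.
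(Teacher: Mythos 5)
Your proof is correct and follows essentially the same route as the paper's: both reduce to the non-obvious inclusion $\bigcap_i\sigma\bigl((T_i^{\ast})^{-1}(\gamma_i,1]\bigr)\subseteq\sigma\bigl(\bigcap_i(T_i^{\ast})^{-1}(\gamma_i,1]\bigr)$, and both deduce from $T_i(x)>\alpha_i+\gamma_i\geq\gamma_i$ (your ``downward closure,'' which is the paper's Lemma on lowering the second coordinate) that $(x,0)$ itself lies in each sub-basic set, then conclude via $\sigma(x,0)=(x,0)$. Your alternative packaging $\sigma(A)=A\cap(X\times\{0\})$ is also consistent with what the paper proves separately in its subsequent lemma on $\sigma$ being open on the sub-basis, so nothing further is needed.
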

\begin{proof}
	We prove the non obvious inclusion $\underset{i}{\cap}\sigma((T_{i}^{\ast})^{-1}(\gamma_{i},1]) \subseteq \sigma(\underset{i}{\cap} (T_{i}^{\ast})^{-1}(\gamma_{i},1])$. Let $(x,0) \in \underset{i}{\cap}\sigma((T_{i}^{\ast})^{-1}(\gamma_{i},1])$, then there are $\alpha_{i} \in J$ such that $T_{i}(x)> \alpha_{i}+\gamma_{i}\geq \gamma_{i}$. This implies that $(x,0) \in \underset{i}{\cap}(T_{i}^{\ast})^{-1}(\gamma_{i},1])$. Since $(x,0)=\sigma(x,0)$, the inclusion follows.
\end{proof}

\begin{lemma} \label{lowerU}
	For every open subset of the form $(T^{\ast})^{-1}(\gamma,1]$ in $X \times J$, every $(y,\beta) \in (T^{\ast})^{-1}(\gamma,1]$ and every $0\leq \beta'\leq \beta$, $(y, \beta') \in (T^{\ast})^{-1}(\gamma,1]$. In particular, if $(y,\beta) \in (T^{\ast})^{-1}(\gamma,1]$, then $(y,0) \in (T^{\ast})^{-1}(\gamma,1]$.
\end{lemma}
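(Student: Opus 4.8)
The plan is to unwind the definition of the set $(T^{\ast})^{-1}(\gamma,1]$ and then exploit the elementary monotonicity of the map $\beta \mapsto T(y)-\beta$ in its second argument. Recall that $T^{\ast}(x,\alpha)=T(x)-\alpha$, so membership $(y,\beta)\in (T^{\ast})^{-1}(\gamma,1]$ amounts to the single condition $T(y)-\beta\in(\gamma,1]$, that is, $\gamma<T(y)-\beta\le 1$. The key observation is that lowering the second coordinate can only \emph{increase} the value $T(y)-\beta$, so if it already lay in the half-open interval $(\gamma,1]$ at height $\beta$, it will remain there at every smaller height $\beta'$.

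Concretely, I would first fix $(y,\beta)\in(T^{\ast})^{-1}(\gamma,1]$ and rewrite this as $T(y)-\beta>\gamma$; the upper bound $T(y)-\beta\le 1$ is automatic because $T(y)\le 1$ and $\beta\ge 0$, and indeed it plays no active role below. Next, given any $\beta'$ with $0\le\beta'\le\beta$, I would note that $-\beta'\ge-\beta$ and hence, adding $T(y)$ to both sides,
\[
T(y)-\beta'\ \ge\ T(y)-\beta\ >\ \gamma.
\]
To confirm that $(y,\beta')$ actually lands in the interval $(\gamma,1]$ and not merely above $\gamma$, I would check the upper bound separately: since $\beta'\ge 0$ and $T(y)\le 1$, we get $T(y)-\beta'\le T(y)\le 1$. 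Thus $T(y)-\beta'\in(\gamma,1]$, which is exactly the statement that $(y,\beta')\in(T^{\ast})^{-1}(\gamma,1]$.

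The ``in particular'' clause is then just the special case $\beta'=0$ of what has been established, so no separate argument is needed. I do not anticipate any genuine obstacle here: the proof is a one-line monotonicity estimate, and the only point requiring a moment's care is verifying that the upper endpoint $1$ of the interval is respected, which follows immediately from $T(y)\le 1$ and $\beta'\ge 0$. This lemma is plainly meant as a preparatory ``downward closure'' fact about the basic open sets, to be invoked later when relating paths at positive heights to their images under the retraction $\sigma$ at height $0$.
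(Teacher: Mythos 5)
Your proof is correct and is essentially identical to the paper's: both arguments reduce membership to the inequality $T(y)-\beta>\gamma$ and use the monotonicity $T(y)-\beta'\geq T(y)-\beta$ (the paper writes it as $T(y)>\beta+\gamma \Rightarrow T(y)>\beta'+\gamma$). Your extra verification that $T(y)-\beta'\leq 1$ is a harmless addition that the paper leaves implicit.
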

\begin{proof}
	Indeed, since $T(y)> \beta + \gamma$, then $T(y)> \beta' + \gamma$, hence $(y, \beta') \in (T^{\ast})^{-1}(\gamma,1]$.
\end{proof}

\begin{lemma} \label{s-i}
	\begin{itemize}
		\item [(i)] If $(y, \beta_{1}), (y, \beta_{2}) \in (T^{\ast})^{-1}(\gamma,1]$, then the path $p$ arising from Remark \ref{fhrem} which connects $(y, \beta_{1})$ with $(y, \beta_{2})$ sits inside $(T^{\ast})^{-1}(\gamma,1]$.
		\item[(ii)] If $(y, \beta_{1}), (y, \beta_{2}) \in \pi_{2}^{-1}(\gamma,1]$, then the path $p$ arising from Remark \ref{fhrem} which connects $(y, \beta_{1})$ with $(y, \beta_{2})$ sits inside $\pi_{2}^{-1}(\gamma,1]$.
	\end{itemize}
\end{lemma}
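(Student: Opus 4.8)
The plan is to read off from Remark \ref{fhrem} that the path $p$ joining two points $(y,\beta_{1})$ and $(y,\beta_{2})$ lying in the same fibre $\{y\} \times J$ keeps the first coordinate fixed at $y$ and lets the second coordinate sweep out the straight segment from $\beta_{1}$ to $\beta_{2}$; that is, $p(s)=(y,(1-s)\beta_{1}+s\beta_{2})$ for $s \in I$. The only feature of $p$ that the argument will use is that for every $s$ its second coordinate $(1-s)\beta_{1}+s\beta_{2}$ is a convex combination of $\beta_{1}$ and $\beta_{2}$, and therefore stays inside the interval $[\min\{\beta_{1},\beta_{2}\},\, \max\{\beta_{1},\beta_{2}\}]$. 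Once this is in hand, both parts reduce to one-line estimates, each tailored to the particular ``shape'' of the two kinds of sub bases members.

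For part (i) I would set $\beta=\max\{\beta_{1},\beta_{2}\}$, so that $(y,\beta)$ is whichever endpoint has the larger second coordinate; by hypothesis $(y,\beta)\in (T^{\ast})^{-1}(\gamma,1]$. For each $s\in I$ the point $p(s)=(y,\beta')$ has $\beta'=(1-s)\beta_{1}+s\beta_{2}$ with $0\leq \beta'\leq \beta$, so Lemma \ref{lowerU} applies verbatim and yields $p(s)=(y,\beta')\in (T^{\ast})^{-1}(\gamma,1]$. As $s$ is arbitrary, the whole image of $p$ lies in $(T^{\ast})^{-1}(\gamma,1]$, which is the assertion. Here the crucial point is that $(T^{\ast})^{-1}(\gamma,1]$ is \emph{downward closed} in each fibre (Lemma \ref{lowerU}), so it suffices to control $p$ from above by the higher endpoint.

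For part (ii) the relevant set $\pi_{2}^{-1}(\gamma,1]=\{(x,\alpha): \alpha>\gamma\}$ is instead cut out by a \emph{lower} bound on the second coordinate, so I would argue directly rather than through Lemma \ref{lowerU}: the hypotheses are $\beta_{1}>\gamma$ and $\beta_{2}>\gamma$, and a convex combination of two reals each exceeding $\gamma$ again exceeds $\gamma$, since $(1-s)\beta_{1}+s\beta_{2}\geq \min\{\beta_{1},\beta_{2}\}>\gamma$ for every $s\in I$; hence $p(s)\in \pi_{2}^{-1}(\gamma,1]$ throughout. The computations are trivial, and the one genuinely load-bearing step is the precise description of $p$ coming from Remark \ref{fhrem}: I need $p$ to stay in the fibre $\{y\}\times J$ and never to overshoot the interval $[\min\{\beta_{1},\beta_{2}\},\,\max\{\beta_{1},\beta_{2}\}]$ in its second coordinate. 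Note this is exactly why the segment, rather than a path dipping down through $(y,0)$, is the right object: a path passing through height $0$ would leave $\pi_{2}^{-1}(\gamma,1]$ whenever $\gamma\geq 0$, so part (ii) forces $p$ to remain between the two endpoint heights. Provided Remark \ref{fhrem} delivers such a fibrewise monotone path, both arguments go through unchanged.
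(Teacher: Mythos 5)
Your proposal is correct and takes essentially the same route as the paper: both prove (i) by combining the description of $p$ from Remark \ref{fhrem} (second coordinate staying between $\beta_{1}$ and $\beta_{2}$) with the fibrewise downward-closedness of $(T^{\ast})^{-1}(\gamma,1]$ from Lemma \ref{lowerU}, and both prove (ii) by the direct observation that a value between $\beta_{1}$ and $\beta_{2}$, each exceeding $\gamma$, still exceeds $\gamma$. Your explicit formula $p(s)=(y,(1-s)\beta_{1}+s\beta_{2})$ and the remark on why the path must not dip below the endpoint heights are just a more detailed spelling-out of what the paper uses implicitly.
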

\begin{proof}
	(i) This follows from Lemma \ref{lowerU} since each point of $p$ is a pair $(y, \beta)$ with $\beta_{1} \leq \beta \leq \beta_{2}$. \\
	(ii) Indeed, any point of $p$ is a pair $(y, \beta)$ with $\beta_{1} \leq \beta \leq \beta_{2}$. But $\beta_{1}> \gamma$, hence $\beta> \gamma$ and $(y, \beta) \in \pi_{2}^{-1}(\gamma,1]$.
\end{proof}

\begin{lemma} \label{rho}
The restriction of $\sigma$ on the sub bases is an open map.
\end{lemma}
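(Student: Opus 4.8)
The plan is to show directly that $\sigma$ carries each sub bases member of $\iota(\mathcal{T})$ to an open subset of the subspace $X \times \{0\}$, so that ``open on the sub bases'' means precisely: the $\sigma$-image of every sub basic open set is open. First I would recall from the remark following Lemma \ref{hmm} that the subspace topology on $X \times \{0\}$ is generated by the sets $(T^{\ast})^{-1}(\gamma,1] \cap (X \times \{0\})=\{(x,0): T(x)>\gamma\}$ for $T \in \mathcal{T}$ and $-1 \leq \gamma <1$, together with the single set $X \times \{0\}$ contributed by $\pi_{2}$. There are exactly two kinds of sub bases members in $X \times J$ to check, and I would treat them in turn.

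For a sub bases member of the form $(T^{\ast})^{-1}(\gamma,1]=\{(x,\alpha): T(x)>\alpha +\gamma\}$, since $\sigma(x,\alpha)=(x,0)$ its image is $\sigma((T^{\ast})^{-1}(\gamma,1])=\{(x,0): \text{there is } \alpha \in J \text{ with } T(x)>\alpha +\gamma\}$. Because $0$ is the least element of $J=[0,1)$, such an $\alpha$ exists precisely when $T(x)>\gamma$: if $T(x)>\gamma$ then $\alpha=0$ already works, and conversely if $T(x)>\alpha+\gamma$ for some $\alpha \geq 0$ then a fortiori $T(x)>\gamma$. This converse is exactly Lemma \ref{lowerU} specialised to $\beta'=0$, which guarantees that $(x,0)$ lies in $(T^{\ast})^{-1}(\gamma,1]$ whenever some $(x,\alpha)$ does. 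Hence $\sigma((T^{\ast})^{-1}(\gamma,1])=\{(x,0): T(x)>\gamma\}$, which is exactly the sub bases open set of $X \times \{0\}$ displayed above, and is therefore open.

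For a sub bases member of the form $\pi_{2}^{-1}(\gamma,1]=\{(x,\alpha): \alpha>\gamma\}$ with $-1 \leq \gamma <1$, the existence condition does not involve $x$ at all: since $\gamma<1$ and $J=[0,1)$, for every $x$ there is some $\alpha \in J$ with $\alpha>\gamma$ (take $\alpha=0$ when $\gamma<0$, and $\alpha=(\gamma+1)/2$ when $0 \leq \gamma<1$). Therefore $\sigma(\pi_{2}^{-1}(\gamma,1])=X \times \{0\}$, which is open since it is the whole space.

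Having verified that both kinds of sub bases members map onto open subsets of $X \times \{0\}$, I would conclude that the restriction of $\sigma$ to the sub bases is open. The only point requiring care, and it is a short one, is recognising that applying $\sigma$ amounts to existentially quantifying over $\alpha \in J$, and that because $0$ is the minimum of $J$ this existential quantifier collapses to the value at $\alpha=0$ for the $T^{\ast}$-sets (justified by Lemma \ref{lowerU}) and is trivially satisfied for the $\pi_{2}$-sets. I thus expect no genuine obstacle beyond the clean bookkeeping of these two cases.
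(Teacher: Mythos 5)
Your proposal is correct and proceeds exactly as the paper does: for the sets $(T^{\ast})^{-1}(\gamma,1]$ you compute the image $\sigma((T^{\ast})^{-1}(\gamma,1])=(T^{\ast})^{-1}(\gamma,1]\cap (X\times\{0\})$ by noting that the existential quantifier over $\alpha$ collapses to $\alpha=0$, and for the sets $\pi_{2}^{-1}(\gamma,1]$ you observe the image is all of $X\times\{0\}$. The only cosmetic difference is that you invoke Lemma \ref{lowerU} and treat the signs of $\gamma$ explicitly where the paper argues inline and reduces to $\gamma\geq 0$; the substance is identical.
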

\begin{proof}
First we assume that $B=(T^{\ast})^{-1}(\gamma,1]$, with $T \in \mathcal{T}$ and $-1\leq \gamma < 1$. For every $(x,0)=\sigma(x,\alpha)$ with $(x,\alpha) \in (T^{\ast})^{-1}(\gamma,1]$ we have that $T(x)> \alpha+\gamma\geq \gamma$ from which we get that $(x,0) \in (T^{\ast})^{-1}(\gamma,1] \cap (X \times \{0\})$. So we have in this case that 
	$$\sigma((T^{\ast})^{-1}(\gamma,1]) \subseteq (T^{\ast})^{-1}(\gamma,1] \cap (X \times \{0\}).$$
	But the converse inclusion holds true as well for if $(x,0) \in (T^{\ast})^{-1}(\gamma,1] \cap (X \times \{0\})$, then $(x,0)=\sigma(x,0)$. Therefore we have that 
	$$\sigma((T^{\ast})^{-1}(\gamma,1]) = (T^{\ast})^{-1}(\gamma,1] \cap (X \times \{0\}),$$
	and $(T^{\ast})^{-1}(\gamma,1] \cap (X \times \{0\})$ is open in the subspace topology. Secondly, if the sub bases member is  $(\pi^{-1}_{2})(\gamma,1]$, where we can assume without loss of generality that $\gamma \geq 0$, then it follows that $\sigma(\pi^{-1}_{2}(\gamma,1])=X \times \{0\}$ since for $\gamma \geq 0$, $\pi^{-1}_{2}(\gamma,1]=X \times (\gamma,1]$, hence $\sigma(\pi^{-1}_{2}(\gamma,1])$ is open. 
\end{proof}

\begin{proposition} \label{pc+lpc}
	\begin{itemize}
		\item [(i)] $X \times \{0\}$ is l.p.c., if and only if $X \times J$ is l.p.c.
		\item[(ii)] $X \times \{0\}$ is p.c., if and only if $X \times J$ is p.c.
	\end{itemize}
\end{proposition}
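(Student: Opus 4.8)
The plan is to deduce everything from the single fact that \(\sigma\colon X\times J\to X\times\{0\}\) is a continuous retraction (indeed a deformation retraction by Theorem \ref{defret}), together with the homeomorphism \(\pi_{1}\colon X\times\{0\}\to X\) of Lemma \ref{hmm}, which lets me transport local path connectivity between \(X\times\{0\}\) and \(X=(X,\iota_{X}(\mathcal{T}))\). Three of the four implications are soft; the remaining one, that local path connectivity of \(X\times\{0\}\) forces it on \(X\times J\), is where the work lies.

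For the soft implications I would argue as follows. In (ii), if \(X\times J\) is p.c. then \(X\times\{0\}=\sigma(X\times J)\) is p.c. as a continuous image; conversely, if \(X\times\{0\}\) is p.c., then for \((x,\alpha),(x',\alpha')\in X\times J\) the maps \(t\mapsto H(t,(x,\alpha))=(x,(1-t)\alpha)\) and \(t\mapsto H(t,(x',\alpha'))\) join each point to its floor, and splicing these with a path in \(X\times\{0\}\) from \((x,0)\) to \((x',0)\) (also a path in \(X\times J\) through the inclusion) connects the two points. For the direction of (i) that \(X\times J\) l.p.c. implies \(X\times\{0\}\) l.p.c., I would use the characterisation ``path components of open sets are open'': given an open \(U_{0}\subseteq X\times\{0\}\), a path component \(C\) of \(U_{0}\) and \(z\in C\), the set \(\sigma^{-1}(U_{0})\) is open and contains \(z\), so its path component \(D\) of \(z\) is open by l.p.c. of \(X\times J\); pushing paths forward by \(\sigma\) shows \(D\cap(X\times\{0\})\subseteq C\), whence \(C\) is open. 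Thus a retract of an l.p.c. space is l.p.c.

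The hard direction is that l.p.c. of \(X\times\{0\}\) (equivalently of \(X\)) implies l.p.c. of \(X\times J\). Here I take a point \((x,\alpha)\) and reduce to a basic neighbourhood
\[
U=\bigcap_{i=1}^{n}(T_{i}^{\ast})^{-1}(\gamma_{i},1]\ \cap\ \bigcap_{j=1}^{m}\pi_{2}^{-1}(\delta_{j},1],
\]
so that \(T_{i}(x)>\alpha+\gamma_{i}\) for all \(i\) and \(\alpha>\delta:=\max_{j}\delta_{j}\). I choose \(\eta>0\) with \(\alpha-\eta>\delta\), \(\alpha+\eta<1\) and \(T_{i}(x)>\alpha+\eta+\gamma_{i}\) for every \(i\); then \(O'=\bigcap_{i}T_{i}^{-1}(\alpha+\eta+\gamma_{i},1]\) is an open neighbourhood of \(x\) in \(X\), and l.p.c. of \(X\) yields a path-connected open \(A\) with \(x\in A\subseteq O'\). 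I set
\[
V=(A\times J)\ \cap\ \bigcap_{i=1}^{n}(T_{i}^{\ast})^{-1}(\gamma_{i},1]\ \cap\ \pi_{2}^{-1}(\alpha-\eta,1].
\]
The cylinder \(A\times J\) equals \(\sigma^{-1}(A\times\{0\})\), hence is open by continuity of \(\sigma\); therefore \(V\) is open, it contains \((x,\alpha)\), and the estimate \(\beta>\alpha-\eta>\delta\ge\delta_{j}\) for \((y,\beta)\in V\) shows \(V\subseteq U\).

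It remains to prove that \(V\) is path-connected, which I expect to be the main obstacle. For \(y\in A\subseteq O'\) one has \(T_{i}(y)>\alpha+\eta+\gamma_{i}\), so the fibre \(\{\beta:(y,\beta)\in V\}\) is the interval \((\alpha-\eta,\min_{i}(T_{i}(y)-\gamma_{i}))\cap J\), which is nonempty and contains \(\alpha\). Given \((y,\beta)\in V\), the vertical path joining \((y,\beta)\) to \((y,\alpha)\) stays inside each \((T_{i}^{\ast})^{-1}(\gamma_{i},1]\) and inside \(\pi_{2}^{-1}(\alpha-\eta,1]\) by Lemma \ref{s-i}, and inside \(A\times J\) since its first coordinate is the constant \(y\in A\); hence it lies in \(V\). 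Finally the horizontal slice \(V\cap(X\times\{\alpha\})\) equals \(\{(y,\alpha):y\in A\}\), and since \(A\) is path-connected, any two of its points are joined by a path \(\prescript{}{\alpha}{p}\) of the type furnished by Lemma \ref{prsc}, whose image remains in the slice and hence in \(V\). Composing a vertical path, a slice path and another vertical path connects any two points of \(V\), so \(V\) is path-connected, producing the required neighbourhood and completing the hard direction.
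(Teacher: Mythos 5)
Your proof is correct, and while it uses the same building blocks as the paper (vertical paths from Remark \ref{fhrem}/Lemma \ref{s-i}, horizontal paths at a fixed height $\alpha$ from Lemma \ref{prsc}, continuity of $\sigma$, and the homeomorphism of Lemma \ref{hmm}), it is organized genuinely differently in the two non-trivial places. For the implication that $X\times J$ l.p.c.\ forces $X\times\{0\}$ l.p.c., the paper redoes a sub-basis decomposition of $\sigma^{-1}(U^{\ast})$ and pushes paths forward through $\sigma$ using Lemmas \ref{rho-p} and \ref{rho}; your argument via path components of open sets, combined with the fact that $\sigma$ fixes $X\times\{0\}$ pointwise, is the abstract statement that a retract of an l.p.c.\ space is l.p.c., and it bypasses the decomposition entirely. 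For the hard direction, the paper splits the finite intersection $C_{i}$ into three cases (only $\pi_2$-type members, only $T^{\ast}$-type members, or a mixture) and in each case produces a neighbourhood any two of whose points can be joined inside the given open set (the weak form of l.p.c.); you treat all cases at once by padding the defining inequalities with the margin $\eta$, using the open cylinder $A\times J=\sigma^{-1}(A\times\{0\})$ over a path-connected open $A\subseteq X$, and adding the floor constraint $\pi_2^{-1}(\alpha-\eta,1]$, which yields a single, explicitly path-connected open $V\subseteq U$ — the strong form of l.p.c.\ — with no case analysis. There is even a point where your version is more careful than the paper's: in the paper's first case, Lemma \ref{s-i}(ii) is cited to connect two arbitrary points of $\pi_2^{-1}(\gamma_{i},1]$, but that lemma only produces vertical paths between points sharing the same first coordinate; your restriction to the cylinder over a path-connected $A$, followed by a horizontal path at height $\alpha$, is exactly what is needed to join points whose first coordinates differ, so your uniform construction also repairs that step.
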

\begin{proof}
	(i) Assume that $X \times \{0\}$ is l.p.c. Let $(x, \alpha)$ be arbitrary and $U$ some open subset of $X \times J$ containing $(x, \alpha)$. Then $U$ decomposes as $U=\underset{i \in \mathcal{I}}{\cup}C_{i}$ where each $C_{i}=\underset{j \in \mathcal{J}_{i}}{\cap}B_{i,j}$ with $\mathcal{J}_{i}$ finite and each $B_{i,j}$ being a sub bases member. It follows that for some $i \in \mathcal{I}$, $(x,\alpha) \in C_{i}$. There are two possible cases for $C_{i}$. In the first one, all the intersecting components $B_{i,j}$ of $C_{i}$ are of the form $\pi_{2}^{-1}(\gamma_{i,j},1]$, hence $C_{i}=\pi_{2}^{-1}(\gamma_{i},1]$ where $\gamma_{i}=\text{max}\{\gamma_{i,j}: j \in 
	\mathcal{J}_{i}\}$. Lemma \ref{s-i}, (ii) implies that there is a path connecting any two points $(x_{1}, \alpha_{1}), (x_{2}, \alpha_{2}) \in \pi_{2}^{-1}(\gamma_{i},1]$ which is included in $\pi_{2}^{-1}(\gamma_{i},1]$ and therefore in $U$.
	The second case for $C_{i}$ is when all of the $B_{i,j}$ are of the form $(T_{1}^{\ast})^{-1}(\gamma_{1},1],..., (T_{n}^{\ast})^{-1}(\gamma_{n},1]$. From Lemma \ref{rho-p} and Lemma \ref{rho}, $\sigma((T_{1}^{\ast})^{-1}(\gamma_{1},1]) \cap ... \cap \sigma ((T_{n}^{\ast})^{-1}(\gamma_{n},1])$ is an open neighbourhood of $(x,0)$. From the assumption on $X \times \{0\}$, there is neighbourhood $V^{\ast}$ of $(x,0)$ such that $V^{\ast} \subseteq \sigma((T_{1}^{\ast})^{-1}(\gamma_{1},1]) \cap ... \cap \sigma ((T_{n}^{\ast})^{-1}(\gamma_{n},1])$ with the property that any two points in $V^{\ast}$ can be connected by a path in $\sigma((T_{1}^{\ast})^{-1}(\gamma_{1},1]) \cap ... \cap \sigma ((T_{n}^{\ast})^{-1}(\gamma_{n},1])$. Since $\sigma$ is continuous, $\sigma^{-1}(V^{\ast})$ is an open neighbourhood of $(x,\alpha)$, hence 
$$\sigma^{-1}(V^{\ast}) \cap (T_{1}^{\ast})^{-1}(\gamma_{1},1] \cap ...\cap (T_{n}^{\ast})^{-1}(\gamma_{n},1]$$
is also an open neighbourhood of $(x, \alpha)$ and 
$$\sigma^{-1}(V^{\ast}) \cap (T_{1}^{\ast})^{-1}(\gamma_{1},1] \cap ...\cap (T_{n}^{\ast})^{-1}(\gamma_{n},1] \subseteq (T_{1}^{\ast})^{-1}(\gamma_{1},1] \cap ...\cap (T_{n}^{\ast})^{-1}(\gamma_{n},1].$$
Let now $(x_{1}, \alpha_{1}), (x_{2}, \alpha_{2}) \in \sigma^{-1}(V^{\ast}) \cap (T_{1}^{\ast})^{-1}(\gamma_{1},1] \cap ...\cap (T_{n}^{\ast})^{-1}(\gamma_{n},1]$. It follows for their images under $\sigma$ that
\begin{align*}
	(x_{1}, 0), (x_{2},0) &\in \sigma(\sigma^{-1}(V^{\ast}) \cap (T_{1}^{\ast})^{-1}(\gamma_{1},1] \cap ...\cap (T_{n}^{\ast})^{-1}(\gamma_{n},1]) \\
	&\subseteq V^{\ast} \cap \sigma((T_{1}^{\ast})^{-1}(\gamma_{1},1]) \cap ... \cap \sigma ((T_{n}^{\ast})^{-1}(\gamma_{n},1])\\
	&=V^{\ast}.
\end{align*}
From our assumption, there is a path $p^{\ast}$ connecting $(x_{1},0)$ with $(x_{2},0)$ which is included in $\sigma((T_{1}^{\ast})^{-1}(\gamma_{1},1]) \cap ... \cap \sigma ((T_{n}^{\ast})^{-1}(\gamma_{n},1])$. But
$$\sigma((T_{1}^{\ast})^{-1}(\gamma_{1},1]) \cap ... \cap \sigma ((T_{n}^{\ast})^{-1}(\gamma_{n},1]) \subseteq (T_{1}^{\ast})^{-1}(\gamma_{1},1] \cap ...\cap (T_{n}^{\ast})^{-1}(\gamma_{n},1],$$
hence $p^{\ast}$ is also included in $(T_{1}^{\ast})^{-1}(\gamma_{1},1] \cap ...\cap (T_{n}^{\ast})^{-1}(\gamma_{n},1]$. From Lemma \ref{s-i},(i) there are paths, $q_{1}$ connecting $(x_{1}, \alpha_{1})$ with $(x_{1},0)$, and $q_{2}$ connecting $(x_{2}, \alpha_{2})$ with $(x_{2},0)$, which are inside $(T_{1}^{\ast})^{-1}(\gamma_{1},1] \cap ...\cap (T_{n}^{\ast})^{-1}(\gamma_{n},1]$. It follows that $q_{1} \ast p^{\ast} \ast q^{-1}_{2}$ is a path that connects $(x_{1}, \alpha_{1})$ with $(x_{2},\alpha_{2})$ which sits inside $(T_{1}^{\ast})^{-1}(\gamma_{1},1] \cap ...\cap (T_{n}^{\ast})^{-1}(\gamma_{n},1]$.  

The third case for $C_{i}$ is when some of the $B_{i,j}$ are of the form $(T_{1}^{\ast})^{-1}(\gamma_{1},1],..., (T_{n}^{\ast})^{-1}(\gamma_{n},1]$, and the rest of the components form a non-empty family consisting of open sets $\pi_{2}^{-1}(\delta_{j},1]$ whose intersection has the form $\pi_{2}^{-1}(\delta,1]$, therefore 
$$C_{i}=(T_{1}^{\ast})^{-1}(\gamma_{1},1]\cap \dots \cap (T_{n}^{\ast})^{-1}(\gamma_{n},1] \cap \pi_{2}^{-1}(\delta,1].$$
Since $(x,\alpha) \in C_{i}$ we have that $\alpha > \delta$ and for every $i=1,...,n$, $T_{i}(x)> \gamma_{i}+\alpha$ which means in particular that 
$$(x,0) \in \sigma((T_{1}^{\ast})^{-1}(\gamma_{1}+\alpha,1]) \cap \dots \cap \sigma((T_{n}^{\ast})^{-1}(\gamma_{n}+\alpha,1]).$$
From the assumption on $X \times \{0\}$, there is some path connected open set $V^{\ast}$ such that 
$$(x,0) \in V^{\ast} \subseteq \sigma((T_{1}^{\ast})^{-1}(\gamma_{1}+\alpha,1]) \cap \dots \cap \sigma((T_{n}^{\ast})^{-1}(\gamma_{n}+\alpha,1]).$$
Since $\sigma$ is continuous, then $\sigma^{-1}(V^{\ast})$ is an open neighbourhood of $(x,\alpha)$, then so is 
$$U^{\ast}=\sigma^{-1}(V^{\ast}) \cap (T_{1}^{\ast})^{-1}(\gamma_{1},1]\cap \dots \cap (T_{n}^{\ast})^{-1}(\gamma_{n},1] \cap \pi_{2}^{-1}(\delta,1].$$
We prove that for every two points $(x_{1}, \alpha_{1}), (x_{2}, \alpha_{2}) \in U^{\ast}$, there is a path in $(T_{1}^{\ast})^{-1}(\gamma_{1},1]\cap \dots \cap (T_{n}^{\ast})^{-1}(\gamma_{n},1] \cap \pi_{2}^{-1}(\delta,1]$ connecting these two points. Their images $(x_{1},0), (x_{2},0)$ under $\sigma$ are in $V^{\ast}$, therefore there is a path $p^{\ast}$ connecting $(x_{1},0)$ with $(x_{2},0)$ which is included in $\sigma((T_{1}^{\ast})^{-1}(\gamma_{1}+\alpha,1]) \cap \dots \cap \sigma((T_{n}^{\ast})^{-1}(\gamma_{n}+\alpha,1])$. This means that for every point $(y,0)$ in $p^{\ast}$ we have that for every $i=1,...,n$, $T_{i}(y)> \gamma_{i}+\alpha$ which implies that 
$$(y,\alpha) \in (T_{1}^{\ast})^{-1}(\gamma_{1},1]\cap \dots \cap (T_{n}^{\ast})^{-1}(\gamma_{n},1] \cap \pi_{2}^{-1}(\delta,1].$$
It follows that the path $\prescript{}{\alpha}p^{\ast}$ which connects $(x_{1}, \alpha)$ with $(x_{2}, \alpha)$ is such that
$$Im(\prescript{}{\alpha}p^{\ast}) \subset (T_{1}^{\ast})^{-1}(\gamma_{1},1]\cap \dots \cap (T_{n}^{\ast})^{-1}(\gamma_{n},1] \cap \pi_{2}^{-1}(\delta,1].$$
Since 
$$(x_{1},\alpha), (x_{2}, \alpha), (x_{1}, \alpha_{1}), (x_{2}, \alpha_{2}) \in (T_{1}^{\ast})^{-1}(\gamma_{1},1]\cap \dots \cap (T_{n}^{\ast})^{-1}(\gamma_{n},1] \cap \pi_{2}^{-1}(\delta,1],$$
we have from Lemma \ref{s-i} the existence of paths $p_{1}$ and $p_{2}$ inside $(T_{1}^{\ast})^{-1}(\gamma_{1},1]\cap \dots \cap (T_{n}^{\ast})^{-1}(\gamma_{n},1] \cap \pi_{2}^{-1}(\delta,1]$ which connect respectively $(x_{1}, \alpha_{1})$ with $(x_{1}, \alpha)$, and $(x_{2}, \alpha)$ with $(x_{2}, \alpha_{2})$. The composite $p_{1} \ast \prescript{}{\alpha}p^{\ast} \ast p_{2}$ is a path inside $(T_{1}^{\ast})^{-1}(\gamma_{1},1]\cap \dots \cap (T_{n}^{\ast})^{-1}(\gamma_{n},1] \cap \pi_{2}^{-1}(\delta,1]$ which connects $(x_{1}, \alpha_{1})$ with $(x_{2}, \alpha_{2})$.

Conversely assume that $X \times J$ is l.p.c. Let $U^{\ast}$ be any open in $X \times \{0\}$ containing any $(x,0)$. Consider $\sigma^{-1}(U^{\ast})$ which is open in $X \times J$ and contains $(x,0)$. Since $X \times J$ is l.p.c., there exists an open neighbourhood $V$ of $(x,0)$ such that $V \subseteq \sigma^{-1}(U^{\ast})$ and satisfies the property that any two points in $V$ are connected by a path which is included in $V$. Decompose $V$ as $V=\underset{i \in \mathcal{I}}{\cup}C_{i}$ where each $C_{i}=\underset{j \in \mathcal{J}_{i}}{\cap}B_{i,j}$ with $\mathcal{J}_{i}$ finite and each $B_{i,j}$ being a sub bases member. There is some $i \in \mathcal{I}$ such that $(x,0) \in C_{i}=\underset{j \in \mathcal{J}_{i}}{\cap}B_{i,j}$. If for some $j \in \mathcal{J}_{i}$, $B_{i,j}=\pi_{2}^{-1}(\gamma_{i,j},1]$, then $\gamma_{i,j}<0$, therefore $B_{i,j}=X \times J$. It follows that, if for all $j \in \mathcal{J}_{i}$, $B_{i,j}=\pi_{2}^{-1}(\gamma_{i,j},1]$, then $C_{i}=X \times J$, consequently $V=X\times J=\sigma^{-1}(U^{\ast})$. In this case we prove that $U^{\ast}$ has the property that any two points in $U^{\ast}$ are connected with a path that is included in $U^{\ast}$. Indeed, let $(x_{1},0), (x_{2},0) \in U^{\ast}$. Their preimages $(x_{1},0), (x_{2},0) \in V$ can be connected by a path $p$ in $V=\sigma^{-1}(U^{\ast})$. Then $\sigma(p)$ is a path in $U^{\ast}$ connecting $(x_{1},0)$ with $(x_{2},0)$. Now assume that not for all $j \in \mathcal{J}_{i}$, $B_{i,j}=\pi_{2}^{-1}(\gamma_{i,j},1]$. It follows that there is a non-empty subset $\mathcal{J}'_{i}$ of $\mathcal{J}_{i}$ such that for every $j \in \mathcal{J}'_{i}$, $B_{i,j}=(T^{\ast}_{j})^{-1}(\gamma_{j},1]$ and for all $j \notin \mathcal{J}'_{i}$, $B_{i,j}=\pi_{2}^{-1}(\gamma_{j},1]=X \times J$. In such circumstances, $C_{i}= \underset{j \in \mathcal{J}'_{i}}{\cap}(T^{\ast}_{j})^{-1}(\gamma_{j},1]$. From Lemma \ref{rho-p} and Lemma \ref{rho} we have that $\sigma(C_{i})$ is an open neighbourhood of $(x,0)$ in $X \times \{0\}$. Let $(x_{1},0), (x_{2},0) \in \sigma(C_{i})$ be arbitrary. Since 
	$$\sigma(C_{i})= \sigma(\underset{j \in \mathcal{J}'_{i}}{\cap}(T^{\ast}_{j})^{-1}(\gamma_{j},1])=\underset{j \in \mathcal{J}'_{i}}{\cap}\sigma((T^{\ast}_{j})^{-1}(\gamma_{j},1]))=\underset{j \in \mathcal{J}'_{i}}{\cap}(T^{\ast}_{j})^{-1}(\gamma_{j},1]) \cap (X \times \{0\}),$$ 
	it follows that $(x_{1},0), (x_{2},0) \in \underset{j \in \mathcal{J}'_{i}}{\cap}(T^{\ast}_{j})^{-1}(\gamma_{j},1]) \subseteq  V$. From the assumption on $V$, there is a path $p$ in $V$ connecting $(x_{1},0)$ with $(x_{2},0)$. Since $V \subseteq \sigma^{-1}(U^{\ast})$, it follows that $\sigma(p)$ is a path in $U^{\ast}$ connecting $(x_{1},0)$ with $(x_{2},0)$ proving that $X \times \{0\}$ is locally path connected. \\
	(ii) Assume that $X \times \{0\}$ is path-connected, and let $(x,\alpha), (y, \beta) \in X \times J$ be arbitrary. There is a path $p$ connecting $(x, \alpha)$ with $(x,0)$, and a path $q$ connecting $(y, \beta)$ with $(y,0)$. Also, from the assumption, there is a path $r$ connecting $(x,0)$ with $(y, 0)$. Now the path $p \ast r \ast q^{-1}$ connects $(x,\alpha)$ with $(y, \beta)$ proving that $X \times J$ is path connected. Conversely, for every $(x,0), (y,0) \in X \times \{0\}$, and every path $p$ in $X \times J$ which connects $(x,0)$ with $(y,0)$, $\sigma(p)$ is a path in $X \times \{0\}$ which connects $(x,0)$ with $(y,0)$ proving that $X \times \{0\}$ is path connected. This completes the proof.
\end{proof}

\begin{lemma} \label{ext-p}
	Let $\gamma: I \rightarrow X \times J$ be a path in $X \times J$, and $t \in I$ a fixed element. Let $I \times (X \times J)$ be the product of spaces equipped with the product topology. The map $\gamma_{t}: I \rightarrow I \times (X \times J)$ defined by $\gamma_{t}(s)=(t, \gamma(s))$ is continuous, and therefore a path in $I \times (X \times J)$. This path is in fact a pair $(c_{t},\gamma)$ of paths with $c_{t}$ the constant path at $t$ in $I$, and $\gamma$ the given path in $X \times J$.
\end{lemma}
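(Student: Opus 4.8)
The plan is to appeal directly to the universal property of the product topology: a map into a product space is continuous if and only if its composition with each canonical projection is continuous. This reduces the problem to checking continuity of the two coordinate functions of $\gamma_{t}$, each of which is transparently continuous.

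First I would name the two projections $p_{1}: I \times (X \times J) \rightarrow I$ and $p_{2}: I \times (X \times J) \rightarrow X \times J$. Then I would compute the two composites. For the first, $(p_{1} \circ \gamma_{t})(s) = t$ for every $s \in I$, so $p_{1} \circ \gamma_{t}$ is the constant map at $t$, which is exactly $c_{t}$; constant maps are continuous. For the second, $(p_{2} \circ \gamma_{t})(s) = \gamma(s)$, so $p_{2} \circ \gamma_{t} = \gamma$, which is continuous by hypothesis since $\gamma$ is a path in $X \times J$.

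Having verified that both coordinate functions are continuous, I would conclude by the universal property that $\gamma_{t}$ itself is continuous into the product $I \times (X \times J)$. Since its domain is $I$, the map $\gamma_{t}$ is by definition a path, and from the computation of the two composites it is literally the pairing $(c_{t}, \gamma)$, which establishes the final assertion of the statement.

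There is no genuine obstacle here: the result is a direct instance of the characterization of continuous maps into a product, and the only thing to observe is that one coordinate is constant while the other is the given path. I would therefore keep the argument to a few lines and avoid belaboring the point, since the content of the lemma is purely the bookkeeping fact that $\gamma_{t}$ assembles a constant path and $\gamma$ into a single path in the product.
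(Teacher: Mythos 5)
Your proof is correct and follows the same route as the paper's: both verify continuity of $\gamma_{t}$ via the universal property of the product topology, observing that the composite with the first projection is the constant path $c_{t}$ and with the second is $\gamma$, and both identify $\gamma_{t}$ with the pair $(c_{t},\gamma)$ by the standard correspondence between paths in a product and pairs of paths.
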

\begin{proof}
	Since $I \times (X \times J)$ is equipped with the product topology, the continuity of $\gamma_{t}$ follows from the fact that its compositions with projections in the first and in the second coordinate, give respectively, the constant map at $t$, and the map $\gamma$ which are both continuous. The second statement is based on the standard fact that paths in a product of two spaces equipped with the product topology correspond to pairs of paths by taking compositions with the respective projections.
\end{proof}

\begin{lemma} \label{inside-0}
	Let $\gamma: I \rightarrow (X \times J)$ be a path with image inside $X \times \{0\}$ and $t \in I$ arbitrary, then $H\circ \gamma_{t}=\gamma$.
\end{lemma}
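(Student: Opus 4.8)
The plan is to reduce the claimed equality of two maps $I \to X \times J$ to a pointwise verification, exploiting the single hypothesis that the image of $\gamma$ lies in the slice $X \times \{0\}$. First I would unpack the three ingredients that feed into the composite. By hypothesis, for every $s \in I$ the point $\gamma(s)$ has vanishing second coordinate, so I can write $\gamma(s) = (x_s, 0)$ for a suitable $x_s \in X$ (equivalently, $\pi_2 \circ \gamma \equiv 0$). From Lemma \ref{ext-p}, the map $\gamma_t$ is given by $\gamma_t(s) = (t, \gamma(s))$. From the definition of the deformation retraction in Theorem \ref{defret}, the homotopy $H$ acts by $H(t,(x,\alpha)) = (x,(1-t)\alpha)$, that is, it scales only the second coordinate.

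The core step is then a one-line evaluation at an arbitrary $s \in I$:
$$(H \circ \gamma_t)(s) = H\bigl(t, \gamma(s)\bigr) = H\bigl(t, (x_s, 0)\bigr) = \bigl(x_s, (1-t)\cdot 0\bigr) = (x_s, 0) = \gamma(s).$$
The decisive observation is that the scaling factor $1-t$ is multiplied against the second coordinate, which is $0$ precisely because $\gamma$ takes values in $X \times \{0\}$; hence the value of $t$ is irrelevant and $H$ restricts to the identity on this slice. Since $s$ was arbitrary, the two maps $H \circ \gamma_t$ and $\gamma$ agree everywhere, giving $H \circ \gamma_t = \gamma$ as maps $I \to X \times J$.

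I do not anticipate any genuine obstacle here: there is no continuity to re-establish, since $H$, $\gamma$, and $\gamma_t$ are already known to be continuous (the latter two from Lemma \ref{ext-p}), and equality of continuous maps into $X \times J$ is settled by agreement on points. The only subtlety worth flagging is that the conclusion uses the hypothesis on the image of $\gamma$ in an essential way, not merely at the endpoints; this is what makes $H \circ \gamma_t$ independent of $t$ and equal to $\gamma$ for every fixed $t \in I$, a fact that will later let one transport paths living in $X \times \{0\}$ freely through the retraction.
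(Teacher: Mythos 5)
Your proof is correct and is essentially identical to the paper's: both verify the equality pointwise by writing $\gamma(s)=(x_s,0)$ and observing that $H(t,(x_s,0))=(x_s,(1-t)\cdot 0)=(x_s,0)=\gamma(s)$. Nothing further is needed.
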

\begin{proof}
	Let for any $s \in I$, $(x,0)=\gamma(s)$. Then, 
	$$(H\circ \gamma_{t})(s)=H(t, \gamma(s))=H(t,(x,0))=(x,0)=\gamma(s),$$
	which shows that $H\circ \gamma_{t}=\gamma$.
\end{proof}

\begin{lemma}\label{1-inside}
	Let $\gamma: I \rightarrow (X \times J)$ be any path, then $H\circ \gamma_{1}$ is a path inside $X \times \{0\}$.
\end{lemma}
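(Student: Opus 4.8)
The plan is to verify the two defining properties of the assertion separately: that the image of $H\circ \gamma_{1}$ lies in $X \times \{0\}$, and that $H\circ \gamma_{1}$ is continuous. Both should follow almost immediately from results already established, so the argument will be short.

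First I would pin down the image. Writing $\gamma(s)=(x,\alpha)$ for an arbitrary $s \in I$, and recalling from Lemma \ref{ext-p} that $\gamma_{1}(s)=(1,\gamma(s))$, I would compute
\[
(H\circ \gamma_{1})(s)=H(1,(x,\alpha))=(x,(1-1)\alpha)=(x,0).
\]
Hence every value of $H\circ \gamma_{1}$ is of the form $(x,0)$, and so its image is contained in $X \times \{0\}$.

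Next I would address continuity. By Lemma \ref{ext-p}, taken with $t=1$, the map $\gamma_{1}: I \to I \times (X \times J)$ is continuous, while the deformation retraction $H: I \times (X \times J) \to X \times J$ was shown to be continuous in the proof of Theorem \ref{defret}. Consequently the composite $H\circ \gamma_{1}$ is continuous as a map into $X \times J$. Since $X \times \{0\}$ carries the subspace topology inherited from $X \times J$ and the image of $H\circ \gamma_{1}$ already lies in $X \times \{0\}$, this is precisely the statement that $H\circ \gamma_{1}$ is a (continuous) path inside $X \times \{0\}$.

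There is no genuine obstacle here: the result is a direct consequence of the continuity of $H$ and of $\gamma_{1}$ together with the evaluation $H(1,(x,\alpha))=(x,0)$. The only point requiring care is the bookkeeping of which earlier statement supplies each ingredient, namely Lemma \ref{ext-p} for the continuity of $\gamma_{1}$ and Theorem \ref{defret} for the continuity of $H$.
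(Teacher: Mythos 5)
Your proposal is correct and follows essentially the same route as the paper: the core of both arguments is the pointwise computation $(H\circ\gamma_{1})(s)=H(1,(x,\alpha))=(x,(1-1)\alpha)=(x,0)$, showing the image lies in $X\times\{0\}$. The paper's proof stops there, leaving continuity implicit (it follows, as you note, from Lemma \ref{ext-p} and the continuity of $H$ established in Theorem \ref{defret}), so your explicit treatment of that point is merely a more careful write-up of the same argument.
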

\begin{proof}
	Let for any $s \in I$, $(x,\alpha)=\gamma(s)$. Then, 
	$$(H\circ \gamma_{1})(s)=H(1, \gamma(s))=H(1,(x,\alpha))=(x,(1-1)\alpha)=(x,0),$$
	which shows that $\text{Im}(H\circ \gamma_{1}) \subseteq X \times \{0\}$.
\end{proof}

\begin{lemma}\label{zero}
	Let $\gamma: I \rightarrow (X \times J)$ be any path, then $H\circ \gamma_{0}=\gamma$.
\end{lemma}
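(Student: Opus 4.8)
The plan is to verify the identity $H\circ \gamma_{0}=\gamma$ by a direct pointwise computation, exactly in the spirit of Lemma \ref{inside-0} and Lemma \ref{1-inside}. Since both sides are maps with domain $I$ and codomain $X \times J$, it suffices to check that they agree on every $s \in I$. By Lemma \ref{ext-p}, the map $\gamma_{0}: I \rightarrow I \times (X \times J)$ is given by $\gamma_{0}(s)=(0, \gamma(s))$, so composing with $H$ gives $(H\circ \gamma_{0})(s)=H(0, \gamma(s))$.

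Next I would fix an arbitrary $s \in I$ and write $\gamma(s)=(x,\alpha)$ for the corresponding point of $X \times J$. Applying the defining formula $H(t,(x,\alpha))=(x,(1-t)\alpha)$ with $t=0$ yields $H(0,(x,\alpha))=(x,(1-0)\alpha)=(x,\alpha)=\gamma(s)$. Since $s$ was arbitrary, this establishes $(H\circ \gamma_{0})(s)=\gamma(s)$ for all $s \in I$, which is precisely the claimed equality of maps.

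There is no genuine obstacle here: the statement merely records the boundary condition of the homotopy $H$ at $t=0$, namely that $H(0,-)$ is the identity on $X \times J$, transported along the path $\gamma$. The only point requiring attention is the correct substitution of $t=0$ into the formula for $H$ while keeping track of which coordinate is acted upon; no separate continuity argument is needed, since continuity of $H\circ \gamma_{0}$ follows automatically from its being equal to the continuous map $\gamma$.
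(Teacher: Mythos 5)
Your proof is correct and is essentially identical to the paper's: both verify the equality pointwise by writing $\gamma(s)=(x,\alpha)$ and computing $(H\circ\gamma_{0})(s)=H(0,(x,\alpha))=(x,(1-0)\alpha)=(x,\alpha)=\gamma(s)$. The only (harmless) addition is your explicit citation of Lemma \ref{ext-p} for the formula $\gamma_{0}(s)=(0,\gamma(s))$, which the paper uses implicitly.
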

\begin{proof}
	Let for any $s \in I$, $(x,\alpha)=\gamma(s)$. Then, 
	$$(H\circ \gamma_{0})(s)=H(0, \gamma(s))=H(0,(x,\alpha))=(x,(1-0)\alpha)=(x,\alpha)=\gamma(s),$$
	which shows that $H\circ \gamma_{0}=\gamma$.
\end{proof}

\begin{lemma} \label{free homotopy}
For every path $\rho: I \rightarrow X \times J$ and every $s, t \in I$ with $s \neq t$, the paths $H \circ \rho_{s}$ and $H \circ \rho_{t}$ are freely homotopic in $X \times J$. 
\end{lemma}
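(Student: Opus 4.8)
The plan is to build an explicit free homotopy by interpolating linearly in the ``time'' coordinate of $H$ while holding the path $\rho$ fixed. Recall that $\rho_{s}$ and $\rho_{t}$ are the paths $u \mapsto (s, \rho(u))$ and $u \mapsto (t, \rho(u))$ in $I \times (X \times J)$ (Lemma \ref{ext-p}), which differ only in their constant first coordinate. So my first step would be to connect these two paths inside $I \times (X \times J)$ by the map $\widetilde{G}: I \times I \rightarrow I \times (X \times J)$ given by $\widetilde{G}(r, u) = ((1-r)s + rt,\ \rho(u))$. Since $I = [0,1]$ is convex, the value $(1-r)s + rt$ lies in $I$ for every $r \in I$, so $\widetilde{G}$ is well defined. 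Its continuity follows from the universal property of the product topology: its composite with the projection onto $I$ is $(r,u) \mapsto (1-r)s + rt$, which is continuous, and its composite with the projection onto $X \times J$ is $(r,u) \mapsto \rho(u)$, which is continuous.

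Next I would record the endpoint conditions $\widetilde{G}(0, u) = (s, \rho(u)) = \rho_{s}(u)$ and $\widetilde{G}(1, u) = (t, \rho(u)) = \rho_{t}(u)$, so that $\widetilde{G}$ is a free homotopy from $\rho_{s}$ to $\rho_{t}$ in $I \times (X \times J)$ (no endpoint constraint is imposed, which is exactly what ``freely homotopic'' requires).

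Finally, since $H$ is continuous (as established in the proof of Theorem \ref{defret}), the composite $G = H \circ \widetilde{G}: I \times I \rightarrow X \times J$ is again continuous and satisfies $G(0, \cdot) = H \circ \rho_{s}$ and $G(1, \cdot) = H \circ \rho_{t}$. Hence $G$ is the desired free homotopy in $X \times J$. Writing $\rho(u) = (x_{u}, \alpha_{u})$ makes the deformation transparent: $G(r,u) = (x_{u},\ (1 - (1-r)s - rt)\,\alpha_{u})$, so the first coordinate is frozen along $\rho$ while the second coordinate is rescaled by a factor that slides linearly from $(1-s)$ at $r=0$ to $(1-t)$ at $r=1$.

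I do not expect any genuine obstacle here; the whole content is the observation that $H$ already supplies a deformation and that sliding its time parameter linearly from $s$ to $t$ produces the homotopy essentially for free. The only points needing a moment's care are that the interpolated time parameter remains in $I$ (guaranteed by convexity) and that $\widetilde{G}$ is continuous by the product-topology characterization; the hypothesis $s \neq t$ is not actually used, since the construction degenerates to a constant homotopy when $s = t$.
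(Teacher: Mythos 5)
Your proposal is correct and is essentially the paper's own proof: the paper also interpolates linearly in the time coordinate via $\kappa(s,t)(x)=(t-s)x+s$ (identical to your $(1-r)s+rt$), composes with $H$, and establishes continuity through the universal property of the product topology. The only cosmetic difference is that the paper places the deformation parameter in the second slot of $I\times I$ (hence its extra swap map $j$), whereas you place it in the first.
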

\begin{proof}
We let first $\kappa(s,t): I \rightarrow I$ be the map $\kappa(s,t)(x)=(t-s)x +s$ which is continuous and satisfies $\kappa(s,t)(0)=s$ and $\kappa(s,t)(1)=t$, hence it is a path in $I$ from $s$ to $t$. Now we define 
	$$\chi_{\rho, \kappa(s,t)}: I \times I \rightarrow X \times J$$
	by setting
	$$\chi_{\rho, \kappa(s,t)}(\eta, x)=H(\kappa(s,t)(x), \rho(\eta)).$$
	For $x=0$ we have that
	$$\chi_{\rho, \kappa(s,t)}(\eta,0)=H(s, \rho(\eta))=(H \circ \rho_{s})(\eta),$$
	and for $x=1$ we have that
	$$\chi_{\rho, \kappa(s,t)}(\eta,1)=H(t, \rho(\eta))=(H \circ \rho_{t})(\eta).$$
	The map $\chi_{\rho, \kappa(s,t)}$ is continuous. Indeed, $\chi_{\rho, \kappa(s,t)}$ factors through $H$ as $\chi_{\rho, \kappa(s,t)}=H \circ (\kappa(s,t), \rho) \circ j$ where $(\kappa(s,t), \rho): I \times I \rightarrow I \times (X \times J)$ is defined by $(x,\eta) \mapsto (\kappa(s,t)(x), \rho(\eta))$ and $j: I \times I \rightarrow I \times I$ is defined by $j(\eta, x)=(x,\eta)$. The continuity of $\chi_{\rho, \kappa(s,t)}$ follows from that of $j$ and that of $(\kappa(s,t), \rho)$. The map $j$ is obviously continuous, and $(\kappa(s,t), \rho)$ is continuous too since its composite with the projection in the first coordinate gives $\kappa(s,t)$ and its composite with the projection in the second coordinate gives $\rho$ which are both continuous.
\end{proof}

\begin{remark} \label{fhrem}
Let $\rho: I \rightarrow X \times J$ be a path with $\iota(\rho)=\rho(0)=(y,\alpha)$ and $\tau(\rho)=\rho(1)=(z,\beta)$. Let $s, t \in I$ and let $\chi_{\rho, \kappa(s,t)}$ be the free homotopy of Lemma \ref{free homotopy} from $H \circ \rho_{s}$ to $H \circ \rho_{t}$. The restriction of the homotopy $\chi_{\rho, \kappa(s,t)}$ on $ \{0\} \times I$ induces a path $p$ given by
$$p(x)=\chi_{\rho, \kappa(s,t)}(0, x)=H(\kappa(s,t)(x),\rho(0)).$$
	We note that
	$$p(0)=(H\circ \rho_{s})(0)=H(s, \rho(0))=(y, (1-s)\alpha)$$ and 
	$$p(1)=(H\circ \rho_{t})(0)=H(t, \rho(0))=(y, (1-t)\alpha).$$ 
	Similarly, by restricting $\chi_{\rho, \kappa(s,t)}$ on $ \{1\} \times I$ we get a path $q$ given by
	$$q(x)=\chi_{\rho, \kappa(s,t)}(1,x)=H(\kappa(s,t)(x),\rho(1)),$$
	which runs from
	$$q(0)=(H\circ \rho_{s})(1)=H(s, \rho(1))=(z, (1-s)\beta)$$ to
	$$q(1)=(H\circ \rho_{t})(1)=H(t, \rho(1))=(z, (1-t)\beta).$$ 
	We observe that $p^{-1} \ast (H \circ \rho_{s}) \ast q$ and $H \circ  \rho_{t}$ are now parallel paths with
	$$\iota(p^{-1} \ast (H \circ \rho_{s}) \ast q)=\iota(p^{-1})= (y, (1-t)\alpha)=(H \circ \rho_{t})(0),$$
	and
	$$\tau(p^{-1} \ast (H \circ \rho_{s}) \ast q)=\tau(q)= (z, (1-t)\beta)=(H \circ \rho_{t})(1).$$
Also we note that the restriction of $\chi_{\rho,\kappa(s,t)}$ on $I \times \{0\}$ gives $H \circ \rho_{s}$ since 
$$\chi_{\rho,\kappa(s,t)}(\eta, 0)=H(\kappa(s,t)(0), \rho(\eta))=H(s, \rho(\eta))=(H \circ \rho_{s})(\eta).$$
In a similar fashion one can verify that the restriction of $\chi_{\rho,\kappa(s,t)}$ on $I \times \{1\}$ gives $H \circ \rho_{t}$.

Finally, we pay a special attention to the case when $\rho$ is the constant path $c_{(y, \alpha)}$ at some $(y, \alpha)$, $s=0$ and $t \in I$ arbitrary. In this case the above path $p$ starts at $p(0)=H(0, (y, \alpha))=(y, \alpha)$ and ends at $p(1)=H(t, (y, \alpha))=(y, (1-t)\alpha)$. This implies that for every two points $(y, \alpha), (y, \beta) \in X \times J$, there is always a path in $X \times J$ connecting $(y, \alpha)$ with $(y, \beta)$ consisting of points $(y, \delta)$ with $\delta$ between $\alpha$ and $\beta$. Indeed, assuming that one of $\alpha$ or $\beta$ is non zero, for instance $\alpha \neq 0$, we can take $s=0$ and $t=1-\dfrac{\beta}{\alpha}$ and that will do.
\end{remark}

\begin{lemma} \label{relative homotopy}
	The paths $p^{-1} \ast (H \circ \rho_{s}) \ast q$ and $H \circ  \rho_{t}$ are homotopic relative to their end points. 
\end{lemma}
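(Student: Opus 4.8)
The plan is to exploit the square map $\chi_{\rho,\kappa(s,t)}\colon I \times I \to X \times J$ from Remark \ref{fhrem} together with the convexity of its domain. Writing $\chi = \chi_{\rho,\kappa(s,t)}$ and using coordinates $(\eta, x)$ on $I \times I$, Remark \ref{fhrem} records exactly which path each edge of the square carries: the bottom edge $I \times \{0\}$ is $H \circ \rho_s$, the top edge $I \times \{1\}$ is $H \circ \rho_t$, the left edge $\{0\} \times I$ is $p$, and the right edge $\{1\} \times I$ is $q$. The four corners are then $\chi(0,0) = p(0)$, $\chi(1,0) = q(0)$, $\chi(1,1) = q(1)$ and $\chi(0,1) = p(1)$, and by the computation already in Remark \ref{fhrem} both paths in the statement run from $(y,(1-t)\alpha)$ to $(z,(1-t)\beta)$.

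First I would encode the two paths in question as images under $\chi$ of two paths in the square that share their endpoints. Let $\ell\colon I \to I \times I$ be the path that runs down the left edge from $(0,1)$ to $(0,0)$, then along the bottom edge from $(0,0)$ to $(1,0)$, then up the right edge from $(1,0)$ to $(1,1)$, parametrized on the three subintervals $[0, 1/3]$, $[1/3, 2/3]$, $[2/3, 1]$ so that $\chi \circ \ell$ is precisely the concatenation $p^{-1} \ast (H \circ \rho_s) \ast q$; here the descent along the left edge, reversing the orientation of $p$, is what produces the factor $p^{-1}$. Let $\mathrm{top}\colon I \to I \times I$ be $\mathrm{top}(u) = (u, 1)$, so that $\chi \circ \mathrm{top} = H \circ \rho_t$. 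Both $\ell$ and $\mathrm{top}$ are paths in $I \times I$ from $(0,1)$ to $(1,1)$.

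Next I would homotope $\ell$ to $\mathrm{top}$ inside the square rel endpoints. Since $I \times I$ is convex, the straight-line homotopy $G(u, r) = (1-r)\,\ell(u) + r\,\mathrm{top}(u)$ stays inside $I \times I$, satisfies $G(\cdot, 0) = \ell$ and $G(\cdot, 1) = \mathrm{top}$, and fixes the endpoints, because $\ell(0) = \mathrm{top}(0) = (0,1)$ and $\ell(1) = \mathrm{top}(1) = (1,1)$ force $G(0, r) = (0,1)$ and $G(1, r) = (1,1)$ for every $r$. Composing with $\chi$ then yields $\chi \circ G\colon I \times I \to X \times J$, a homotopy between $\chi \circ \ell = p^{-1} \ast (H \circ \rho_s) \ast q$ and $\chi \circ \mathrm{top} = H \circ \rho_t$ which is stationary on the endpoints, since $\chi(G(0,r)) = \chi(0,1) = p(1)$ and $\chi(G(1,r)) = \chi(1,1) = q(1)$ are constant in $r$. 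This is the desired conclusion.

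The only genuinely delicate point is the bookkeeping in the middle step: arranging the piecewise parametrization of $\ell$ so that $\chi \circ \ell$ equals the standard triadic concatenation $p^{-1} \ast (H \circ \rho_s) \ast q$ rather than merely a reparametrization of it. This is routine — one reparametrizes each edge linearly on its assigned third of $I$ — and even if a reparametrization discrepancy were to survive, it is itself a homotopy rel endpoints and hence harmless. Everything else rests on the convexity of $I \times I$ and on the continuity of $\chi$ already established in Lemma \ref{free homotopy}.
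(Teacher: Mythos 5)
Your proof is correct, and while it uses the same geometric object as the paper --- the square map $\chi_{\rho,\kappa(s,t)}\colon I\times I \to X\times J$ and the topological triviality of its domain --- the execution is genuinely different. The paper works on the boundary: it notes that the restriction of $\chi_{\rho,\kappa(s,t)}$ to $\partial(I\times I)\cong S^{1}$ is null homotopic because the inclusion $S^{1}\hookrightarrow D^{2}$ is, identifies that boundary loop with $p^{-1}\ast (H\circ\rho_{s})\ast q \ast (H\circ\rho_{t})^{-1}$ via Remark \ref{fhrem}, concludes this loop is homotopic to the constant loop, and then cancels $(H\circ\rho_{t})^{-1}$ by groupoid algebra. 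You instead work in the interior: you realize the two paths of the statement as images under $\chi_{\rho,\kappa(s,t)}$ of two paths in the square sharing the endpoints $(0,1)$ and $(1,1)$ (the left-bottom-right edge path $\ell$ and the top edge), use convexity of $I\times I$ to get a straight-line homotopy rel endpoints between them, and push it forward by $\chi_{\rho,\kappa(s,t)}$. Your route buys two things: it bypasses the passage from a \emph{free} null homotopy of the boundary loop to a \emph{based} one (the paper's phrase ``null homotopic'' for a map $S^{1}\to X\times J$ is a priori a free statement, and upgrading it to $\sim c_{(y,(1-t)\alpha)}$ rel basepoint needs a standard but unstated argument), and it eliminates the algebraic cancellation step entirely, since the homotopy you build already fixes the endpoints. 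The paper's route, in exchange, requires no parametrization bookkeeping, as the boundary loop is read off directly from Remark \ref{fhrem}. Your handling of the one delicate point is also sound: $\chi\circ\ell$ is the triadic concatenation of $p^{-1}$, $H\circ\rho_{s}$, $q$, which differs from the binary concatenation $p^{-1}\ast(H\circ\rho_{s})\ast q$ only by a reparametrization of $I$ fixing $0$ and $1$, hence is homotopic to it rel endpoints.
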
 
\begin{proof}
The map $\chi_{\rho, \kappa(s,t)}$ is null homotopic since $I \times I$ is contractible. On the other hand, $I \times I$ is homeomorphic with $D^{2}$ and its boundary is homeomorphic with $S^{1}$. Since the inclusion map $\iota:S^{1} \rightarrow D^{2}$ is null homotopic, it follows that the restriction of $\chi_{\rho, \kappa(s,t)}$ on the boundary of $I \times I$ is null homotopic too. But from Remark \ref{fhrem} the image of this restriction is the path $p^{-1} \ast (H \circ \rho_{s} )\ast q \ast (H \circ \rho_{t})^{-1}$, hence $p^{-1} \ast (H \circ \rho_{s} )\ast q \ast (H \circ \rho_{t})^{-1} \sim c_{(y, (1-t)\alpha)}$. This implies that 
\begin{align*}
p^{-1} \ast (H \circ \rho_{s} )\ast q & \sim p^{-1} \ast (H \circ \rho_{s} )\ast q \ast c_{(z,(1-t)\beta)}\\
&\sim p^{-1} \ast (H \circ \rho_{s} )\ast q \ast (H \circ \rho_{t})^{-1}\ast (H \circ \rho_{t}) \\
&\sim c_{(y, (1-t)\alpha)} \ast (H \circ \rho_{t})\\
& \sim H \circ \rho_{t},
\end{align*}
which proves the claim.
\end{proof}

\begin{lemma} \label{hginv}
	For every path $\gamma: I \rightarrow (X \times J)$ and every $t \in I$, $(H \circ \gamma_{t})^{-1}=H \circ \gamma^{-1}_{t}$.
\end{lemma}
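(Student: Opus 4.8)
The statement is a pointwise identity of two paths $I \to X \times J$, so the plan is simply to evaluate both sides at an arbitrary $s \in I$ and check that they agree. First I would record the two ingredients I need: the standard path-reversal convention $p^{-1}(s) = p(1-s)$, and the formula $\gamma_{t}(s) = (t, \gamma(s))$ from Lemma \ref{ext-p}, so that $(H \circ \gamma_{t})(s) = H(t, \gamma(s))$. Since $H$ only ever sees its two arguments, nothing beyond these definitions is required.

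With these in hand the computation is immediate. On the left I would write $(H \circ \gamma_{t})^{-1}(s) = (H \circ \gamma_{t})(1-s) = H(t, \gamma(1-s))$. On the right, since $\gamma^{-1}_{t}$ is the extension at level $t$ of the reversed path $\gamma^{-1}$, I have $\gamma^{-1}_{t}(s) = (t, \gamma^{-1}(s)) = (t, \gamma(1-s))$, whence $(H \circ \gamma^{-1}_{t})(s) = H(t, \gamma(1-s))$. The two expressions coincide for every $s \in I$, which is precisely the asserted equality.

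There is no genuine obstacle here: the $(\cdot)_{t}$ construction freezes the first coordinate at $t$, and path reversal acts only on the $X \times J$-argument of $H$, so it is immaterial whether one reverses before or after applying $H$. The one point worth flagging, only to keep the notation unambiguous, is that $\gamma^{-1}_{t}$ could a priori be parsed either as $(\gamma^{-1})_{t}$ or as $(\gamma_{t})^{-1}$; but both send $s$ to $(t, \gamma(1-s))$ and hence agree as maps $I \to I \times (X \times J)$. It is exactly this compatibility between the reversal and the freezing operations that makes the lemma hold so transparently.
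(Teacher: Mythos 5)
Your proof is correct and follows essentially the same route as the paper's: a pointwise evaluation of both sides at an arbitrary parameter, using the path-reversal convention and the definition $\gamma_{t}(s)=(t,\gamma(s))$, so that both sides reduce to $H(t,\gamma(1-s))$. The remark that the two parsings $(\gamma^{-1})_{t}$ and $(\gamma_{t})^{-1}$ coincide is a nice additional clarification not made explicit in the paper, which implicitly relies on it.
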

\begin{proof}
	We wee that for every $u \in I$,
	\begin{align*}
		(H \circ \gamma_{t})^{-1}(u)&= (H \circ \gamma_{t})(u-1)\\
		&=H(t, \gamma(u-1))\\
		&=H(t, \gamma^{-1}(u))\\
		&=(H \circ \gamma^{-1}_{t})(u),
	\end{align*}
	proving the claim.
\end{proof}

\begin{lemma} \label{ast com comp}
	For every $n \geq 2$ paths $\gamma_{1},...,\gamma_{n}$ in $X \times J$ such that $\gamma_{1} \ast \dots \ast \gamma_{n}$ exists, and every $t \in I$, $(H \circ (\gamma_{1})_{t}) \ast \dots \ast (H \circ (\gamma_{n})_{t})$ exists, and $H \circ (\gamma_{1} \ast \dots \ast \gamma_{n})_{t}=(H \circ (\gamma_{1})_{t}) \ast \dots \ast (H \circ (\gamma_{n})_{t})$.
\end{lemma}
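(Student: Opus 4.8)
The plan is to reduce the statement to the elementary fact that left-composition with a fixed continuous map commutes with concatenation of paths. To this end I would first introduce, for each fixed $t \in I$, the map $\Phi_{t}: X \times J \rightarrow X \times J$ given by $\Phi_{t}(x,\alpha)=H(t,(x,\alpha))=(x,(1-t)\alpha)$, which is continuous, being a restriction of $H$. The point of this reformulation is the identity $H \circ \gamma_{t}=\Phi_{t} \circ \gamma$, valid for every path $\gamma$ in $X \times J$; indeed $(H \circ \gamma_{t})(s)=H(t,\gamma(s))=\Phi_{t}(\gamma(s))=(\Phi_{t}\circ \gamma)(s)$. With this in hand the lemma reads $\Phi_{t} \circ (\gamma_{1} \ast \dots \ast \gamma_{n})=(\Phi_{t} \circ \gamma_{1}) \ast \dots \ast (\Phi_{t} \circ \gamma_{n})$, and the whole matter becomes a statement about a single continuous self-map of $X \times J$.

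Next I would dispose of the existence claim. For the concatenation on the right-hand side to be defined one needs $\tau(H \circ (\gamma_{i})_{t})=\iota(H \circ (\gamma_{i+1})_{t})$ for each $i$, that is $\Phi_{t}(\gamma_{i}(1))=\Phi_{t}(\gamma_{i+1}(0))$. Since $\gamma_{1} \ast \dots \ast \gamma_{n}$ exists we already have $\gamma_{i}(1)=\gamma_{i+1}(0)$, and applying the map $\Phi_{t}$ to both sides yields the required equality; hence $(\Phi_{t} \circ \gamma_{1}) \ast \dots \ast (\Phi_{t} \circ \gamma_{n})$ is well defined.

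For the equality itself I would observe that both sides arise from the same reparametrization of the domain $I$ followed by the same maps, the only difference being whether $\Phi_{t}$ is applied before or after one selects the active subinterval. The cleanest route is induction on $n$ with the two-fold concatenation as base case. For $n=2$, writing out the piecewise definition, for $s \leq 1/2$ both $\Phi_{t} \circ (\gamma_{1} \ast \gamma_{2})$ and $(\Phi_{t} \circ \gamma_{1}) \ast (\Phi_{t} \circ \gamma_{2})$ return $\Phi_{t}(\gamma_{1}(2s))$, while for $s \geq 1/2$ both return $\Phi_{t}(\gamma_{2}(2s-1))$, the two formulas agreeing at $s=1/2$ by the endpoint matching just established; thus the two paths coincide pointwise. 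The inductive step follows by applying the base case to the pair $\gamma_{1} \ast \dots \ast \gamma_{n-1}$ and $\gamma_{n}$ together with the induction hypothesis. Finally, translating back through $H \circ \gamma_{t}=\Phi_{t} \circ \gamma$ gives the formula as stated.

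I do not expect a genuine obstacle here, as the content is purely formal. The only point demanding any care is the bookkeeping of the reparametrizing homeomorphisms hidden in the $n$-fold concatenation; organizing the argument as an induction on $n$, rather than expanding the $n$-piece formula directly, keeps this bookkeeping trivial and isolates all the geometry in the harmless observation that $\Phi_{t}$ acts only on the codomain while concatenation acts only on the domain.
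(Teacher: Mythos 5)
Your proposal is correct and follows essentially the same route as the paper: the endpoint-matching argument for existence, then induction on $n$ with the $n=2$ base case verified pointwise on $[0,1/2]$ and $[1/2,1]$. The only difference is your packaging of $H(t,-)$ as a continuous map $\Phi_{t}$, which is a cosmetic reformulation of the same computation.
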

\begin{proof}
	Indeed, since for every $1\leq i \leq n-1$, $\gamma_{i}(1)=\gamma_{i+1}(0)$, then 
	$$(H \circ (\gamma_{i})_{t})(1)=H(t, \gamma_{i}(1))=H(t, \gamma_{i+1}(0))=(H \circ (\gamma_{i+1})_{t})(0).$$
	The proof of the second part will be done dy induction on $n \geq 2$. When $n=2$, for every $s \in [0, 1/2]$,
	\begin{align*}
		(H \circ (\gamma_{1} \ast \gamma_{2})_{t})(s)&=H(t, (\gamma_{1} \ast \gamma_{2})(s))=H(t, \gamma_{1}(2s))\\
		&=((H \circ (\gamma_{1})_{t}) \ast (H \circ (\gamma_{2})_{t}))(s),
	\end{align*} 
	and similarly, for $s \in [1/2, 1]$,
	\begin{align*}
		(H \circ (\gamma_{1} \ast \gamma_{2})_{t})(s)&=H(t, (\gamma_{1} \ast \gamma_{2})(s))=H(t, \gamma_{2}(2s-1))\\
		&=((H \circ (\gamma_{1})_{t}) \ast (H \circ (\gamma_{2})_{t}))(s).
	\end{align*} 
	By comparing we see that $H \circ (\gamma_{1} \ast \gamma_{2})_{t}=(H \circ (\gamma_{1})_{t}) \ast (H \circ (\gamma_{2})_{t})$. For the inductive step,
	\begin{align*}
		H \circ (\gamma_{1} \ast \dots \ast \gamma_{n-1} \ast \gamma_{n})_{t}&= (H \circ (\gamma_{1} \ast \dots \ast \gamma_{n-1})_{t} )\ast (H \circ (\gamma_{n})_{t}) && (\text{by the base step})\\
		&= (H \circ (\gamma_{1})_{t}) \ast \dots \ast (H \circ (\gamma_{n-1})_{t}) \ast (H \circ (\gamma_{n})_{t}) && (\text{by the assumption.})
	\end{align*}
\end{proof}\\
\\
From this moment and on we will assume that $(X, \iota_{X}(\mathcal{T}))$ is p.c and l.p.c. to ensure that we can define the topological space $\pi^{l}(X \times J)$.

\begin{theorem} \label{dfrt}
The space $\pi^{l}(X \times J)$ deformation retracts to its subspace $\pi^{l}(X \times \{0\})$.
\end{theorem}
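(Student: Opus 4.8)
The plan is to promote the deformation retraction $H$ of Theorem~\ref{defret} to the level of homotopy classes. Writing $h_{t}:=H(t,-)\colon X\times J\to X\times J$ for the continuous time-$t$ slice, I would set
$$\mathcal{H}\colon I\times \pi^{l}(X\times J)\longrightarrow \pi^{l}(X\times J),\qquad \mathcal{H}(t,[\gamma])=[H\circ\gamma_{t}]=[h_{t}\circ\gamma].$$
This is well defined on classes because each $h_{t}$ is continuous, so relatively homotopic paths have relatively homotopic images. The three structural requirements of a strong deformation retraction are then exactly the preparatory lemmas: $\mathcal{H}(0,[\gamma])=[\gamma]$ by Lemma~\ref{zero}; the class $\mathcal{H}(1,[\gamma])=[H\circ\gamma_{1}]$ is carried by a path inside $X\times\{0\}$ by Lemma~\ref{1-inside}, and since $\sigma=h_{1}$ the assignment $[\gamma]\mapsto[H\circ\gamma_{1}]$ is precisely the induced functor $\sigma_{\ast}$, i.e.\ the retraction onto the subspace $\pi^{l}(X\times\{0\})$; and $\mathcal{H}(t,[\gamma])=[\gamma]$ for every class carried by a path inside $X\times\{0\}$ by Lemma~\ref{inside-0}, so the homotopy is stationary on the subspace. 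That each slice $\mathcal{H}(t,-)$ respects composition and inversion of classes follows from Lemmas~\ref{ast com comp} and~\ref{hginv}.

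Everything so far is formal, and the entire difficulty lies in proving that $\mathcal{H}$ is continuous for the Lasso topology; this I regard as the main obstacle. I would test continuity at a point $(t_{0},[\gamma_{0}])$ against a basic Lasso neighbourhood $N([h_{t_{0}}\circ\gamma_{0}],\mathcal{U},V,W)$ of $\mathcal{H}(t_{0},[\gamma_{0}])$ and produce a product neighbourhood $\big((t_{0}-\varepsilon,t_{0}+\varepsilon)\cap I\big)\times N'$ of $(t_{0},[\gamma_{0}])$ whose image lands inside it. The strategy is to decouple the two variables by factoring the comparison of $[h_{t}\circ\gamma]$ with $[h_{t_{0}}\circ\gamma_{0}]$ through the intermediate class $[h_{t_{0}}\circ\gamma]$: first move $t$ to $t_{0}$, then move $\gamma$ to $\gamma_{0}$.

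For the $t$-direction I would use the free-homotopy machinery. Taking $\rho=\gamma$ and $s=t_{0}$ in Lemma~\ref{free homotopy}, Remark~\ref{fhrem} and Lemma~\ref{relative homotopy} yields the relative-homotopy identity
$$[h_{t}\circ\gamma]=[p]^{-1}\ast[h_{t_{0}}\circ\gamma]\ast[q],$$
where $p$ and $q$ are the \emph{vertical} paths in the fibres over $\gamma(0)$ and $\gamma(1)$ joining heights $(1-t_{0})\alpha$ and $(1-t)\alpha$ (and likewise at the terminal point); these degenerate to constant paths as $t\to t_{0}$. Since $V,W$ are open and, by the reasoning behind Lemmas~\ref{lowerU} and~\ref{s-i}, each basic open set leaves a genuine vertical interval of room about every one of its points, I can pick $\varepsilon>0$ so small that for $|t-t_{0}|<\varepsilon$ the segment $p$ stays inside $V$ and $q$ stays inside $W$. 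Then $p^{-1}$ is admissible as the ``path in $V$ ending at the initial point'' and $q$ as the ``path in $W$ starting at the terminal point'' in the definition of the Lasso neighbourhood (with trivial Spanier loops), so pre- and post-composition by these vertical paths keeps a class inside $N([h_{t_{0}}\circ\gamma_{0}],\mathcal{U},V,W)$.

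For the $\gamma$-direction I would invoke continuity of the fixed-time induced map $(h_{t_{0}})_{\ast}$ in the Lasso topology, itself verified against the Lasso basis exactly as for any continuous self-map: there is a neighbourhood $N'=N([\gamma_{0}],\mathcal{U},V_{0},W_{0})$, with $V_{0}\subseteq V$ and $W_{0}\subseteq W$, whose image under $(h_{t_{0}})_{\ast}$ sits in a Lasso neighbourhood of $[h_{t_{0}}\circ\gamma_{0}]$ strictly finer than $N$. Combining the two steps, for $[\gamma]\in N'$ and $|t-t_{0}|<\varepsilon$ I would write $[h_{t}\circ\gamma]=[p]^{-1}\ast[h_{t_{0}}\circ\gamma]\ast[q]$, replace the middle factor by its Lasso representative $[\gamma_{V}\ast\mu\ast(h_{t_{0}}\circ\gamma_{0})\ast\mu'\ast\lambda_{W}]$, and absorb $p^{-1}$ into $\gamma_{V}$ and $q$ into $\lambda_{W}$. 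The hard part will be the bookkeeping that makes this absorption legitimate: one must arrange simultaneously that the endpoint wiggle from varying $\gamma$ and the vertical segments $p,q$ from varying $t$ all lie in the \emph{fixed} $V$ and $W$, that the concatenations $p^{-1}\ast\gamma_{V}$ and $\lambda_{W}\ast q$ are again paths in $V$ and $W$ ending/starting at the prescribed base points, and that the Spanier loops $\mu,\mu'$ relative to $\mathcal{U}$ survive the operation. This forces the nested choice $V_{0}\subseteq V$, $W_{0}\subseteq W$ with a single $\varepsilon$ serving both base points, and it is precisely where the openness and vertical convexity of sub-basic sets (Lemmas~\ref{lowerU}, \ref{s-i}) and the compatibility Lemmas~\ref{ast com comp}, \ref{hginv} are used in concert. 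Once joint continuity is secured, $\mathcal{H}$ is the desired strong deformation retraction of $\pi^{l}(X\times J)$ onto $\pi^{l}(X\times\{0\})$.
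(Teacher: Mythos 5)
Your skeleton coincides with the paper's: the same map $H^{l}(t,[\gamma])=[H\circ\gamma_{t}]$, the same well-definedness argument, and the same three lemmas (\ref{zero}, \ref{1-inside}, \ref{inside-0}) for the retraction identities; you also correctly locate the entire difficulty in Lasso-continuity. The gap is inside the continuity argument, and it is substantive. You fix the data \emph{sequentially}: first a neighbourhood $N'$ of $[\gamma_{0}]$ from continuity of $(h_{t_{0}})_{\ast}$, then a single $\varepsilon>0$ making the vertical correction paths $p_{\gamma},q_{\gamma}$ stay in $V$ and $W$ for every $[\gamma]\in N'$ and $|t-t_{0}|<\varepsilon$. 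No such $\varepsilon$ need exist, because the vertical room of $V$ at the points $h_{t_{0}}(\gamma(0))$ is not bounded below as $[\gamma]$ runs over $N'$. Concretely, write $\gamma_{0}(0)=(x_{0},\alpha_{0})$ with $\alpha_{0}>0$, let $V=\pi_{2}^{-1}(\delta,1]$ with $0<\delta<(1-t_{0})\alpha_{0}$ and $t_{0}<1$, and take $V_{0}=\pi_{2}^{-1}(\delta/(1-t_{0}),1]$. This choice satisfies your own constraints ($V_{0}\subseteq V$ and $h_{t_{0}}(V_{0})\subseteq V$), and by Remark \ref{fhrem} and Lemma \ref{s-i} the vertical segments below $(x_{0},\alpha_{0})$ lie in $V_{0}$, so $N'$ contains classes with initial point $(x_{0},\alpha')$ for every $\alpha'>\delta/(1-t_{0})$. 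Now for any $\varepsilon>0$ pick $t\in(t_{0},t_{0}+\varepsilon)$ and then $\alpha'$ with $(1-t)\alpha'<\delta<(1-t_{0})\alpha'$: the class $[h_{t}\circ\gamma]$ has initial point of height $(1-t)\alpha'<\delta$, hence lies outside $N([h_{t_{0}}\circ\gamma_{0}],\mathcal{U},V,W)$ altogether, since membership forces the initial point into $V$. So the time window and the space neighbourhood cannot be chosen one after the other: what is actually needed is an interval $\mathfrak{I}$ around $t_{0}$ and a neighbourhood $V_{0}$ of $\gamma_{0}(0)$ with $\mathfrak{I}\times V_{0}\subseteq H^{-1}(V)$, a joint condition that the slice condition $V_{0}\subseteq h_{t_{0}}^{-1}(V)$ does not imply. (Note also that your nesting $V_{0}\subseteq V$ is not the relevant relation: $V_{0}$ is a neighbourhood of $\gamma_{0}(0)$, while $V$ is a neighbourhood of the different point $h_{t_{0}}(\gamma_{0}(0))$.) This joint choice is exactly the paper's device: it decomposes $H^{-1}(V)=\bigcup_{i}V_{i}^{(1)}\times V_{i}^{(2)}$, takes the Lasso neighbourhood of $[\gamma_{0}]$ with respect to the cover $\mathcal{U}'$ of all such second factors, and keeps the time interval inside the matching first factor, so that $H(\mathfrak{I}\times V_{i}^{(2)})\subseteq V$ holds uniformly.

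The same missing mechanism infects the Spanier part of your $\gamma$-step. You take $N'=N([\gamma_{0}],\mathcal{U},V_{0},W_{0})$ with the \emph{same} cover $\mathcal{U}$; but $h_{t_{0}}$ pushes a loop lying in $U_{i}\in\mathcal{U}$ to a loop lying in $h_{t_{0}}(U_{i})$, which need not be contained in any member of $\mathcal{U}$, so $(h_{t_{0}})_{\ast}(N')\subseteq N([h_{t_{0}}\circ\gamma_{0}],\mathcal{U},V,W)$ fails in general. Proposition 2.6 of \cite{LT} does give continuity of $(h_{t_{0}})_{\ast}$, but the neighbourhoods it produces are indexed by a cover pulled back through $h_{t_{0}}$ --- and even that is not enough here, because your pushed Spanier loops must stay $\mathcal{U}$-small for \emph{all} $s$ near $t_{0}$ simultaneously, not just at $s=t_{0}$. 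The paper's proof resolves this with the same cover $\mathcal{U}'$ (second factors of product opens inside the sets $H^{-1}(U_{i})$), invoking the conjugation argument of Lemmas \ref{free homotopy} and \ref{relative homotopy} together with Lemma 2.3 of \cite{LT} precisely when $s$ falls outside the relevant interval factor. Your handling of the time direction by vertical paths and absorption is sound and mirrors the paper's use of Remark \ref{fhrem}; what is missing is the pullback-cover construction that makes every estimate uniform in $t$, and without it the two steps you defer to ``bookkeeping'' cannot be completed.
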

\begin{proof}
We define
$$H^{l}: I \times \pi^{l}(X \times J) \rightarrow \pi^{l}(X \times J) \text{ by } (t, [\gamma]) \mapsto [H \circ \gamma_{t}].$$
This map is well defined for if $\gamma \sim \delta$ are homotopic paths in $X \times J$ (relative to end points) and $t\in I$ fixed, then 
$$\pi_{2} \circ \gamma_{t}=\gamma \sim \delta = \pi_{2} \circ \delta_{t}.$$
But for both, $\gamma$ and $\delta$, $$\pi_{1}(\gamma_{t})=c_{t}=\pi_{1}(\delta_{t}),$$
where $c_{t}$ is the constant path at $t$ in $I$, therefore we have a homotopy of pairs
$$\gamma_{t}=(c_{t}, \gamma) \sim (c_{t}, \delta)=\delta_{t}$$
in $I \times (X \times J)$. The continuity of $H$ now implies that  $H \circ \gamma_{t} \sim H \circ \delta_{t}$,
and so $[H \circ \gamma_{t}]=[H \circ \delta_{t}]$.

Before we prove the continuity of $H^{l}$ we define an open cover $\mathcal{U}'$ of $X \times J$ in terms of any given open cover $\mathcal{U}$ of $X \times J$ and of $H$ in the following way. Since $H$ is continuous, for every $V \in \mathcal{U}$, $H^{-1}(V)$ is open in $I \times (X \times J)$, therefore for some index set $\mathfrak{I}_{V}$, $H^{-1}(V)=\underset{i \in \mathfrak{I}_{V}}{\cup}V_{i}^{(1)} \times V_{i}^{(2)}$ where $V_{i}^{(1)}$ is open in $I$ and $V_{i}^{(2)}$ is open in $X \times J$. The family $\mathcal{U}'$ consisting of all $V_{i}^{(2)}$ with $V$ ranging in $\mathcal{U}$ and $i \in \mathfrak{I}_{V}$ is an open cover for $X \times J$. Indeed, for every $(x,\alpha) \in X \times J$, there is $V \in \mathcal{U}$ such that $(x,\alpha) \in V$. It follows that $(0,(x,\alpha)) \in H^{-1}(V)=\underset{i \in \mathfrak{I}_{V}}{\cup}V_{i}^{(1)} \times V_{i}^{(2)}$ therefore for some $i \in \mathfrak{I}_{V}$, $(0,(x,\alpha)) \in V_{i}^{(1)} \times V_{i}^{(2)}$, hence $(x,\alpha) \in V_{i}^{(2)}$, where from the definition, $V_{i}^{(2)} \in \mathcal{U}'$.

Now we prove the continuity of $H^{l}$. Let $(t,[\gamma]) \in I \times \pi^{l}(X \times J)$ and assume that $N([H \circ \gamma_{t}], \mathcal{U}, V,W)$ is an arbitrary open neighbourhood of $[H \circ \gamma_{t}]$. If $\iota(\gamma)=(x,\alpha)$ and $\tau(\gamma)=(y,\beta)$, then $\iota(H \circ \gamma_{t})=(x,(1-t)\alpha)$ and $\tau(H \circ \gamma_{t})=(y,(1-t)\beta)$. So $V$ is an open neighbourhood of $(x,(1-t)\alpha)$ and $W$ an open neighbourhood of $(y,(1-t)\beta)$. It follows that $(t, (x,\alpha)) \in H^{-1}(V)=\underset{i \in \mathfrak{I}_{V}}{\cup}V_{i}^{(1)} \times V_{i}^{(2)}$, therefore for some $i \in \mathfrak{I}_{V}$, $t \in V_{i}^{(1)}$ and $(x,\alpha) \in V_{i}^{(2)}$. Since $V_{i}^{(2)}$ is an open neighbourhood of $(x,\alpha)$, we rewrite it for simplicity as $V_{(x,\alpha)}$. It is also clear from the definition of $\mathcal{U}'$ that $V_{(x,\alpha)} \in \mathcal{U}'$. In the same way we find an open neighbourhood $W_{j}^{(1)}$ of $t$ in $I$ and an open neighbourhood $W_{(y,\beta)}$ of $(y, \beta)$ in $X \times J$ which is a member of $\mathcal{U}'$. Since $V_{i}^{(1)} \cap W_{j}^{(1)}$ is an open subset of $I$ containing $t$, there is an open "interval" $\mathfrak{I}_{t}$ such that $t \in \mathfrak{I}_{t}$ and $\mathfrak{I}_{t} \subset V_{i}^{(1)} \cap W_{j}^{(1)}$. It is now clear that an open neighbourhood of $(t, [\gamma])$ in $I \times \pi^{l}(X \times J)$ is the product $\mathfrak{I}_{t} \times N([\gamma], \mathcal{U}', V_{(x,\alpha)}, W_{(y,\beta)})$. Next we prove that for every $(s,[\delta]) \in \mathfrak{I}_{t} \times N([\gamma], \mathcal{U}', V_{(x,\alpha)}, W_{(y,\beta)})$ we have that $H^{l}(s,[\delta])=[H \circ \delta_{s}] \in N([H \circ \gamma_{t}], \mathcal{U}, V,W)$. From the definition of $N([\gamma], \mathcal{U}', V_{(x,\alpha)}, W_{(y,\beta)})$, 
\begin{equation} \label{delta sim}
\delta \sim \lambda \ast \mu \ast \gamma \ast \mu' \ast \rho
\end{equation}
where $\lambda: I \rightarrow V_{(x,\alpha)}$ with $\tau(\lambda)=(x,\alpha)$, $\mu \in \pi_{1}(\mathcal{U}', (x,\alpha))$, $\mu' \in \pi_{1}(\mathcal{U}', (y,\beta))$ and $\rho: I \rightarrow W_{(y, \beta)}$ with $\iota(\rho)=(y, \beta)$. It follows from the continuity of $H$ and from Lemma \ref{ast com comp} that
\begin{equation} \label{2bcvh}
H \circ \delta_{s} \sim (H \circ \lambda_{s}) \ast (H \circ \mu_{s}) \ast (H \circ \gamma_{s}) \ast (H \circ \mu_{s}') \ast (H \circ \rho_{s}).
\end{equation}
Since $s \in \mathfrak{I}_{t}$, and since for every $u \in I$, $\lambda(u) \in V_{(x,\alpha)}$, it follows that
\begin{equation} \label{2-c}
(H \circ \lambda_{s})(u)=H(s, \lambda(u)) \subseteq H(V_{i}^{(1)} \times V_{(x,\alpha)}) \subseteq H(H^{-1}(V))=V,
\end{equation}
hence $H \circ \lambda_{s}$ is a path with $Im(H \circ \lambda_{s}) \subseteq V$. Also we see that $\tau(H \circ \lambda_{s})=(x,(1-s)\alpha)$ since $\lambda(1)=(x,\alpha)$. By a similar argument we see that $H \circ \rho_{s}$ is a path with $\text{Im}(H \circ \rho_{s}) \subseteq W$ and $\iota(H \circ \rho_{s})=(y, (1-s)\beta)$. Further, since $\mu$ is a finite concatenation of paths $$\mu=\underset{i}{\prod}u_{i} \ast v_{i} \ast u^{-1}_{i}$$ 
where each $u_{i}$ is a path starting at $(x,\alpha)$ and ending at some $(x_{i}, \alpha_{i})$ and $v_{i}$ is a loop at $(x_{i}, \alpha_{i})$ which lies entirely inside some $U'_{i} \in \mathcal{U}'$, it follows that
\begin{align*}
H \circ \mu_{s}&=\underset{i}{\prod}(H \circ (u_{i})_{s}) \ast (H \circ (v_{i})_{s}) \ast (H \circ (u^{-1}_{i})_{s})\stepcounter{equation}\tag{\theequation}\label{Hcirc}\\
&=\underset{i}{\prod}(H \circ (u_{i})_{s}) \ast (H \circ (v_{i})_{s}) \ast (H \circ (u_{i})_{s})^{-1} && (\text{by lemma \ref{hginv}})
\end{align*}
where 
$$(H \circ (u_{i})_{s})(0)=H(s, (x,\alpha))=(x, (1-s)\alpha),$$
and that
$$(H \circ (u_{i})_{s})(1)=H(s, (x_{i},\alpha_{i}))=(x_{i}, (1-s)\alpha_{i}).$$
Since $U'_{i} \in \mathcal{U}'$, there has been some $U_{i} \in \mathcal{U}$ and an open subset $A \subseteq I$ such that $A \times U'_{i}$ takes part in the decomposition of $H^{-1}(U_{i})$ as a union of direct products. If $s \in A$, then for every $r \in I$, since $v_{i}(r) \in U'_{i}$, it follows that $(s,v_{i}(r)) \in H^{-1}(U_{i})$, consequently we have that
$$(H \circ (v_{i})_{s})(r)=H(s, v_{i}(r)) \in H(H^{-1}(U_{i})) \subseteq U_{i}.$$
So in this case the loop $H \circ (v_{i})_{s}$ is included in $U_{i} \in \mathcal{U}$. Now we deal with the case when $s \notin A$. For any $s_{i} \in A$, we have from the above that $H \circ (v_{i})_{s_{i}}$ is included in $U_{i}$. Let $\kappa(s_{i},s): I \rightarrow I$ be the path in $I$ of Lemma \ref{free homotopy} with $\kappa(s_{i},s)(0)=s_{i}$ and $\kappa(s_{i},s)(1)=s$ which defines a free homotopy $\chi_{v_{i}, \kappa(s_{i},s)}$ from $H \circ (v_{i})_{s_{i}}$ to $H \circ (v_{i})_{s}$ by the rule
$$\chi_{v_{i}, \kappa(s_{i},s)}(r,p)=H(\kappa(s_{i},s)(p), v_{i}(r)).$$
For $p=0$ we have that
$$\chi_{v_{i}, \kappa(s_{i},s)}(r,0)=H(s_{i}, v_{i}(r))=(H \circ (v_{i})_{s_{i}})(r),$$
and for $p=1$ we have that
$$\chi_{v_{i}, \kappa(s_{i},s)}(r,1)=H(s, v_{i}(r))=(H \circ (v_{i})_{s})(r).$$
We observe that $H \circ (v_{i})_{s_{i}}$ is a loop at 
$$
(H \circ (v_{i})_{s_{i}})(0)=H(s_{i}, v_{i}(0))=H(s_{i},(x_{i}, \alpha_{i}))=(x_{i}, (1-s_{i})\alpha_{i}),
$$
and most importantly that, 
$$(H \circ (v_{i})_{s_{i}})(r)=H(s_{i}, v_{i}(r)) \in H(H^{-1}(U_{i})) \subseteq U_{i},$$
since $s_{i} \in A$.
The restriction of $\chi_{v_{i}, \kappa(s_{i},s)}$ on $\{0\} \times I$ gives a path $p_{i}$
$$\xymatrix{(x_{i}, (1-s_{i})\alpha_{i})=H(s_{i}, v_{i}(0))=\chi_{v_{i}, \kappa(s_{i},s)}(0,0) \ar[r] & \chi_{v_{i}, \kappa(s_{i},s)}(0,1)}=H(s,v_{i}(0))=(x_{i}, (1-s)\alpha_{i}).$$
But from Lemma \ref{relative homotopy} we have that
$$H \circ (v_{i})_{s} \sim p^{-1}_{i} \ast (H \circ (v_{i})_{s_{i}}) \ast p_{i}.$$
It follows that we can now replace in (\ref{Hcirc}) each component 
$$(H \circ (u_{i})_{s}) \ast (H \circ (v_{i})_{s}) \ast (H \circ (u_{i})_{s})^{-1}$$ 
whenever $s \notin A$ by
$$((H \circ (u_{i})_{s}) \ast p^{-1}_{i}) \ast ( H \circ (v_{i})_{s_{i}}) \ast ((H \circ (u_{i})_{s})\ast p^{-1}_{i})^{-1}$$
without changing the homotopy class of $H \circ \mu_{s}$. In conclusion, $H \circ \mu_{s}$ is homotopic with a loop in $\pi_{1}(\mathcal{U}, (x,(1-s)\alpha))$. By a similar argument, we see that $H \circ \mu'_{s}$ is homotopic with a loop in $\pi_{1}(\mathcal{U}, (y,(1-s)\beta))$. What we proved so far, together with (\ref{2bcvh}), imply that $ [H \circ \delta_{s}]\in N([H \circ \gamma_{s}], \mathcal{U}, V, W)$. A particular case of this is when $\delta=\gamma$, and $\lambda, \mu, \mu', \rho$ are trivial. Then from (\ref{2-c}) we would have for every $\xi \in \mathfrak{I}_{t}$ that
\begin{equation} \label{entirely}
(x, (1-\xi)\alpha)=H(\xi, (x,\alpha))=H(\xi, c_{(x,\alpha)}(u))=(H \circ (c_{(x,\alpha)})_{\xi})(u) \subseteq V.
\end{equation}
We prove now that $[H \circ \gamma_{t}] \in N([H \circ \gamma_{s}], \mathcal{U}, V, W)$ which together with Lemma 2.3 of \cite{LT} imply that $[H \circ \delta_{s}] \in N([H \circ \gamma_{t}], \mathcal{U}, V, W)$ as desired. We will utilize again Remark \ref{fhrem} in the following form. Let $\kappa(s,t): I \rightarrow I$ be the map of Lemma \ref{free homotopy} such that $\kappa(s,t)(0)=s$ and $\kappa(s,t)(1)=t$ and let $\chi_{\gamma, \kappa(s,t)}$ be the corresponding free homotopy from $H \circ \gamma_{s}$ to $H \circ \gamma_{t}$. The restriction of $\chi_{\gamma, \kappa(s,t)}$ on $\{0\} \times I$ gives a path 
$$p(\xi)=\chi_{\gamma, \kappa(s,t)}(0,\xi)=H(\kappa(s,t)(\xi), \gamma(0))$$
with 
$$p(0)=H(s, \gamma(0))=(x, (1-s)\alpha) \text{ and } p(1)=H(t, \gamma(0))=(x, (1-t)\alpha),$$
which from (\ref{entirely}) is included entirely in $V$. By restricting now $\chi_{\gamma, \kappa(s,t)}$ on $\{1\} \times I$ we get a path 
$$q(\xi)=\chi_{\gamma, \kappa(s,t)}(1,\xi)=H(\kappa(s,t)(\xi), \gamma(1)),$$
with
$$q(0)=H(s, \gamma(1))=(y, (1-s)\beta) \text{ and } q(1)=H(t, \gamma(1))=(y, (1-t)\beta).$$
By a similar argument with the one used to prove that $\text{Im}p \subset U$, one can prove that $\text{Im}q \subset W$. It follows that $[p^{-1} \ast (H \circ \gamma_{s}) \circ q] \in N([H \circ \gamma_{s}], \mathcal{U}, V, W)$. Now Lemma \ref{relative homotopy} implies that 
$$H \circ \gamma_{t} \sim p^{-1} \ast (H \circ \gamma_{s}) \circ q,$$
which proves that $[H \circ \gamma_{t}] \in N([H \circ \gamma_{s}], \mathcal{U}, V, W)$. Finally, we need to check the rest of the conditions for $H^{l}$ to be a deformation retraction. For every $[\gamma] \in H^{l}(X \times J)$ we have from Lemma \ref{zero} that
$$H^{l}(0, [\gamma])=[H \circ \gamma_{0}]=[\gamma].$$
For every $[\gamma] \in H^{l}(X \times J)$ we have from Lemma \ref{1-inside} that
$$H^{l}(1, [\gamma]) \in H^{l}(X \times \{0\}).$$
For every $t \in I$ and every path $\gamma$ in $X \times \{0\}$ we have from Lemma \ref{inside-0} that
$$H^{l}(t, [\gamma])=[\gamma],$$
which concludes the proof.
\end{proof}

\begin{corollary}
	There is a continuous map $\pi^{l}\sigma: \pi^{l}(X \times J, \iota(\mathcal{T})) \rightarrow \pi^{l}(X \times \{0\}, \iota(\mathcal{T}))$.
\end{corollary}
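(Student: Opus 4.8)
The plan is to realize $\pi^{l}\sigma$ as the time-one slice of the deformation retraction $H^{l}$ built in Theorem \ref{dfrt}, and then to read off its continuity from the continuity of $H^{l}$ together with the universal property of the subspace topology. Concretely, I would define
$$\pi^{l}\sigma([\gamma]) = H^{l}(1, [\gamma]) = [H \circ \gamma_{1}].$$
Well-definedness of this assignment on homotopy classes requires no separate argument: it is already contained in the opening paragraph of the proof of Theorem \ref{dfrt}, where $H^{l}$ is shown to be well defined. I would also record the pointwise identity $H \circ \gamma_{1} = \sigma \circ \gamma$, which holds because $(H \circ \gamma_{1})(s) = H(1, \gamma(s)) = \sigma(\gamma(s))$; this exhibits $\pi^{l}\sigma$ as the groupoid map induced by $\sigma$ and so justifies the notation.

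Next I would verify that $\pi^{l}\sigma$ actually takes values in the subspace $\pi^{l}(X \times \{0\})$. This is precisely Lemma \ref{1-inside}: for every path $\gamma$ in $X \times J$ the path $H \circ \gamma_{1}$ has image inside $X \times \{0\}$, so $[H \circ \gamma_{1}] \in \pi^{l}(X \times \{0\})$.

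For continuity, I would factor $\pi^{l}\sigma$ as the composite
$$\pi^{l}(X \times J) \xrightarrow{\;j_{1}\;} I \times \pi^{l}(X \times J) \xrightarrow{\;H^{l}\;} \pi^{l}(X \times J),$$
where $j_{1}([\gamma]) = (1, [\gamma])$. The map $j_{1}$ is continuous since, with respect to the product topology on the middle space, its two coordinate maps are the constant map at $1 \in I$ and the identity on $\pi^{l}(X \times J)$, both continuous. As $H^{l}$ is continuous by Theorem \ref{dfrt}, the composite $H^{l} \circ j_{1}$ is continuous as a map into $\pi^{l}(X \times J)$. Since by the previous paragraph its image lies in the subspace $\pi^{l}(X \times \{0\})$, the corestriction $\pi^{l}\sigma: \pi^{l}(X \times J) \rightarrow \pi^{l}(X \times \{0\})$ is continuous by the universal property of the subspace topology.

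I do not expect a genuine obstacle here, since all the hard work — in particular the delicate neighbourhood estimate establishing continuity of $H^{l}$ — is already carried out in Theorem \ref{dfrt}, and the corollary merely harvests the slice at $t = 1$. The only point deserving a line of care is the final passage from continuity into $\pi^{l}(X \times J)$ to continuity into the subspace $\pi^{l}(X \times \{0\})$; this is legitimate precisely because that subspace is equipped with the subspace topology in which Theorem \ref{dfrt} phrases the deformation retraction.
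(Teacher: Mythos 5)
Your construction is sound in its core, but it takes a genuinely different route from the paper, and it leaves one real issue unresolved. The paper's primary argument is not Theorem \ref{dfrt} at all: it invokes Proposition 2.6 of \cite{LT} (functoriality of $Y \mapsto \pi^{l}Y$ for continuous maps between p.c.\ and l.p.c.\ spaces), applied to the continuous retraction $\sigma: X \times J \rightarrow X \times \{0\}$, which at once produces the continuous map $[\alpha] \mapsto [\sigma(\alpha)]$ between the two Lasso-topologized groupoids. The realization of this map as the restriction of $H^{l}$ to $\{1\} \times \pi^{l}(X \times J)$ --- which is your \emph{definition} --- appears in the paper only as a supplementary remark, via exactly the identity $(H \circ \alpha_{1})(s) = \sigma(\alpha)(s)$ that you also record. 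Your route (time-one slice of $H^{l}$, then corestriction) avoids citing the external proposition, which is a virtue; your verifications of well-definedness, of the image landing in $\pi^{l}(X \times \{0\})$ via Lemma \ref{1-inside}, and of continuity of the composite $H^{l} \circ j_{1}$ are all correct.

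The gap is in your final step. Corestriction gives continuity into $\pi^{l}(X \times \{0\})$ equipped with the \emph{subspace topology inherited from} $\pi^{l}(X \times J)$. But the target named in the corollary is $\pi^{l}(X \times \{0\}, \iota(\mathcal{T}))$, i.e.\ the fundamental groupoid of the space $(X \times \{0\}, \iota(\mathcal{T}))$ carrying its \emph{own} Lasso topology, whose basic sets $N([\alpha], \mathcal{U}, V, W)$ are built from open covers $\mathcal{U}$ of $X \times \{0\}$; this intrinsic topology is what Proposition 2.6 of \cite{LT} delivers, and it is not interchangeable with the subspace topology without proof, so your closing sentence (``that subspace is equipped with the subspace topology'') assumes precisely what must be shown. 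The gap can be closed, and only in the one direction you need: the intrinsic Lasso topology is coarser than the subspace topology. Given an intrinsic basic neighbourhood $N([\alpha], \mathcal{U}, V, W)$, pull the cover back along $\sigma$ to the open cover $\sigma^{-1}\mathcal{U}=\{\sigma^{-1}(U): U \in \mathcal{U}\}$ of $X \times J$; if $[\beta] \in N([\alpha], \sigma^{-1}\mathcal{U}, \sigma^{-1}(V), \sigma^{-1}(W)) \cap \pi^{l}(X \times \{0\})$, then applying $\sigma$ to a factorization $\beta \sim \gamma \ast \mu \ast \alpha \ast \mu' \ast \lambda$ --- using that $\sigma$ fixes $X \times \{0\}$ pointwise, sends paths in $\sigma^{-1}(V)$ into $V$ (resp.\ $\sigma^{-1}(W)$ into $W$), and sends $\pi_{1}(\sigma^{-1}\mathcal{U}, -)$ into $\pi_{1}(\mathcal{U}, -)$ --- shows $[\beta] \in N([\alpha], \mathcal{U}, V, W)$. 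Inserting this comparison after your corestriction step completes your proof.
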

\begin{proof}
	The existence of $\pi^{l}\sigma$ follows directly from Proposition 2.6 of \cite{LT} and is defined by $\pi^{l}\sigma([\alpha])=[
	\sigma(\alpha)]$ for every homotopy class of a path $\alpha$ in $X \times J$. Also $\pi^{l}\sigma$ can be realized as the restriction of $H^{l}$ on $\{1\} \times \pi^{l}(X \times J)$ since $H^{l}(1, [\alpha])= [H \circ \alpha_{1}]$ and 
	$(H\circ \alpha_{1})(s)=H(1,\alpha(s))=\sigma(\alpha)(s)$.
\end{proof}

\section{Fuzzy complements in topological terms} \label{fc}

As we mentioned in section \ref{rt} the main goal of this paper is interpreting in topological terms the complement of a fuzzy subset. It turns out that the groupoid $\pi(\pi^{l}(X \times J))$ has the necessary information encoded in it to make this interpretation possible. Theorem \ref{main} explained in simple terms, states that every fuzzy subset $F$ and its complement $1-F$, give rise to a pair of functors $[F], [1-F]$ in the category $\textbf{Grpd}$ of groupoids which are related by the equality $[1-F]=\pi(\boldsymbol{\iota}) \circ [F]$ where $\pi(\boldsymbol{\iota}): \pi(\pi^{l}(X \times J)) \rightarrow \pi({\pi^{l}(X \times J)}$ is the natural extension to the homotopy classes of paths of the map $\boldsymbol{\iota}: \pi^{l}(X \times J) \rightarrow \pi^{l}(X \times J)$ which maps every $[\alpha]$ to $[\alpha^{-1}]$. Therefore complementing $F$ in $[0,1]^{X}$, is represented in $\textbf{Grpd}$ by applying $\pi(\boldsymbol{\iota})$ to $[F]$ where $\pi(\boldsymbol{\iota})$ maps points to their inverses. We emphasize here that we do not assume any topology defined on $\pi(\pi^{l}(X \times J))$. It is just the fundamental groupoid of the topological space $\pi^{l}(X \times J)$.

Let $\gamma$ be a path in $X \times J$, $s,t \in I$ and let $\chi_{\gamma,\kappa(s,t)}$ be the free homotopy of Lemma \ref{free homotopy} which transforms $H \circ \gamma_{s}$ to $H \circ \gamma_{t}$. For every $0\leq a< b \leq 1$ we let $\gamma':I \rightarrow X \times J$ be the path given by $\gamma'(\eta)= \gamma(a + \eta(b-a))$. In the following lemma we will describe the free homotopy of Lemma \ref{free homotopy} which transforms $H \circ \gamma'_{s}$ to $H \circ \gamma'_{t}$ in terms of $\chi_{\gamma,\kappa(s,t)}$. Our interest lies in studying the neighbourhoods of the homotopy classes of paths of Remark \ref{hmm} arising from the above homotopy. For this we need the following. For every fixed $\eta \in I$ we let $\ell_{\eta}:I \rightarrow I \times I$ be the continuous map $x \mapsto (\eta,x)$. Its composite $\chi_{\gamma,\kappa(s,t)} \circ \ell_{\eta}$ is continuous and is given by the rule $(\chi_{\gamma,\kappa(s,t)} \circ \ell_{\eta})(x)=H(\kappa(s,t)(x), \gamma (\eta))=(H \circ \gamma_{\kappa(s,t)(x)})(\eta)$. 

\begin{lemma} \label{path-res}
	Let $\gamma$ be a path in $X \times J$, $s,t \in I$ and let $\chi_{\gamma,\kappa(s,t)}$ be the free homotopy of Lemma \ref{free homotopy} which transforms $H \circ \gamma_{s}$ to $H \circ \gamma_{t}$. For every $0 \leq a< b \leq 1$, the free homotopy induced by $\chi_{\gamma,\kappa(s,t)}$ from the restriction of $H \circ \gamma_{s}$ in $[a,b]$ to the restriction of $H \circ \gamma_{t}$ in $[a,b]$ coincides with the free homotopy from $H \circ \gamma'_{s}$ to $H \circ \gamma'_{t}$. The path of Remark \ref{fhrem} arising from this homotopy which connects $(H \circ \gamma_{s})(a)$ with $(H \circ \gamma_{t})(a)$ is given by $(\chi_{\gamma,\kappa(s,t)} \circ \ell_{a})(x)$, and the other path which connects $(H \circ \gamma_{s})(b)$ with $(H \circ \gamma_{t})(b)$ is given by $(\chi_{\gamma,\kappa(s,t)} \circ \ell_{b})(x)$.
\end{lemma}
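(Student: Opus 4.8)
The plan is to reduce all three assertions to the single substitution identity $\gamma'(\eta)=\gamma(a+\eta(b-a))$ inserted into the explicit formula $\chi_{\gamma,\kappa(s,t)}(\eta,x)=H(\kappa(s,t)(x),\gamma(\eta))$ supplied by Lemma \ref{free homotopy}. Everything will then follow by direct comparison of formulas, with no genuine homotopy argument needed: the ``coincidence'' claimed in the statement is an honest equality of maps, not merely a homotopy.

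First I would fix the convention that restricting a path or a homotopy defined on $I$ to a subinterval $[a,b]$ means precomposing in the path-variable with the affine reparametrization $\eta\mapsto a+\eta(b-a)$ of $I$ onto $[a,b]$. With this convention a one-line computation identifies the restriction of $H\circ\gamma_{s}$ to $[a,b]$ with $H\circ\gamma'_{s}$, namely
$$(H\circ\gamma_{s})(a+\eta(b-a))=H(s,\gamma(a+\eta(b-a)))=H(s,\gamma'(\eta))=(H\circ\gamma'_{s})(\eta),$$
and the same computation with $t$ in place of $s$ identifies the restriction of $H\circ\gamma_{t}$ to $[a,b]$ with $H\circ\gamma'_{t}$.

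Next I would write out both free homotopies and compare them directly. The free homotopy induced by $\chi_{\gamma,\kappa(s,t)}$ on the strip $[a,b]\times I$, reparametrized in the first coordinate, is $(\eta,x)\mapsto\chi_{\gamma,\kappa(s,t)}(a+\eta(b-a),x)=H(\kappa(s,t)(x),\gamma(a+\eta(b-a)))$. Applying Lemma \ref{free homotopy} instead to the path $\gamma'$ gives $\chi_{\gamma',\kappa(s,t)}(\eta,x)=H(\kappa(s,t)(x),\gamma'(\eta))=H(\kappa(s,t)(x),\gamma(a+\eta(b-a)))$. These two expressions are literally the same map, which settles the first assertion.

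Finally, for the two connecting paths I would apply Remark \ref{fhrem} to $\gamma'$ together with $\chi_{\gamma',\kappa(s,t)}$. Restricting this homotopy to $\{0\}\times I$ yields the path
$$p(x)=\chi_{\gamma',\kappa(s,t)}(0,x)=H(\kappa(s,t)(x),\gamma'(0))=H(\kappa(s,t)(x),\gamma(a)),$$
which I recognize as $(\chi_{\gamma,\kappa(s,t)}\circ\ell_{a})(x)$; evaluating at $x=0$ and $x=1$ gives the endpoints $(H\circ\gamma_{s})(a)$ and $(H\circ\gamma_{t})(a)$ as claimed. Restricting instead to $\{1\}\times I$ yields $q(x)=\chi_{\gamma',\kappa(s,t)}(1,x)=H(\kappa(s,t)(x),\gamma(b))=(\chi_{\gamma,\kappa(s,t)}\circ\ell_{b})(x)$, which connects $(H\circ\gamma_{s})(b)$ with $(H\circ\gamma_{t})(b)$. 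I do not expect a real obstacle here beyond bookkeeping; the only point that genuinely requires care is stating the reparametrization convention for restrictions precisely, since once that is in place every equality is forced by the defining formula for $\chi$.
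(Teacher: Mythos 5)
Your proposal is correct and follows essentially the same route as the paper: the paper's proof likewise precomposes $\chi_{\gamma,\kappa(s,t)}$ with the reparametrization $(\eta,x)\mapsto(a+\eta(b-a),x)$, observes the substitution identity $H(\kappa(s,t)(x),\gamma(a+\eta(b-a)))=H(\kappa(s,t)(x),\gamma'(\eta))$ to get an honest equality of homotopies, and then reads off the two connecting paths from Remark \ref{fhrem} exactly as you do. Your explicit statement of the reparametrization convention is a small clarity improvement over the paper, but the mathematical content is identical.
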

\begin{proof}
	The first part of the proof is standard and works in general but we have described it in full for the sake of the second statement of the lemma which will be used in the subsequent lemma. The map $\sigma: I \times I \rightarrow I \times I$ defined by $\sigma(\eta, x)= (a + \eta(b-a),x)$ is continuous and its composite with $\chi_{\gamma,\kappa(s,t)}$ is given by 
	$$(\chi_{\gamma,\kappa(s,t)} \circ \sigma)(\eta, x)=H(\kappa(s,t)(x), \gamma(a+\eta(b-a))).$$
	At $x=0$ the corresponding path is $(\chi_{\gamma,\kappa(s,t)} \circ \sigma)(\eta, 0)=H(s, \gamma(a+\eta(b-a)))$ whose initial is 
	$$(\chi_{\gamma,\kappa(s,t)} \circ \sigma)(0, 0)=H(s, \gamma(a))=(H \circ \gamma_{s})(a),$$
	and its terminal is
	$$(\chi_{\gamma,\kappa(s,t)} \circ \sigma)(1, 0)=H(s, \gamma(b))=(H \circ \gamma_{s})(b).$$
	At $x=1$ the corresponding path is $(\chi_{\gamma,\kappa(s,t)} \circ \sigma)(\eta, 1)=H(t, \gamma(a+\eta(b-a)))$ whose initial is 
	$$(\chi_{\gamma,\kappa(s,t)} \circ \sigma)(0, 1)=H(t, \gamma(a))=(H \circ \gamma_{t})(a),$$
	and its terminal is
	$$(\chi_{\gamma,\kappa(s,t)} \circ \sigma)(1, 1)=H(t, \gamma(b))=(H \circ \gamma_{t})(b).$$
	Further we observe that 
	$$H(\kappa(s,t)(x), \gamma(a+\eta(b-a)))=H(\kappa(s,t)(x), \gamma'(\eta)),$$
	where the right hand side gives the homotopy from $H \circ \gamma'_{s}$ to $H \circ \gamma'_{t}$. Now we prove the second part of the lemma. From Remark \ref{fhrem} we see that the path which connects $(H \circ \gamma_{s})(a)=(H \circ \gamma'_{s})(0)$ with $(H \circ \gamma_{t})(a)=(H \circ \gamma'_{t})(0)$ is given by 
	$$p(x)=H(\kappa(s,t)(x), \gamma'(0))=H(\kappa(s,t)(x), \gamma(a))=(\chi_{\gamma,\kappa(s,t)} \circ \ell_{a})(x),$$
	and similarly, the path which connects $(H \circ \gamma_{s})(b)=(H \circ \gamma'_{s})(1)$ with $(H \circ \gamma_{t})(b)=(H \circ \gamma'_{t})(1)$ is given by 
	$$q(x)=H(\kappa(s,t)(x), \gamma'(1))=H(\kappa(s,t)(x), \gamma(b))=(\chi_{\gamma,\kappa(s,t)} \circ \ell_{b})(x),$$
	which concludes the proof.
\end{proof}

\begin{lemma} \label{t-chi}
	The map $\tilde{\chi}_{\gamma,\kappa(s,t)}: I \rightarrow \pi^{l}(X \times I)$ given by $\tilde{\chi}_{\gamma,\kappa(s,t)}(\eta)=[\chi_{\gamma,\kappa(s,t)} \circ \ell_{\eta}]$ is continuous.
\end{lemma}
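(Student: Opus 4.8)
The plan is to check continuity directly against the Lasso topology on $\pi^{l}(X \times J)$, in the same spirit as the proof of Theorem \ref{dfrt}. Abbreviate $\alpha_{\eta} = \chi_{\gamma,\kappa(s,t)} \circ \ell_{\eta}$, so that $\tilde{\chi}_{\gamma,\kappa(s,t)}(\eta) = [\alpha_{\eta}]$, and note that $\iota(\alpha_{\eta}) = H(s,\gamma(\eta)) = (H \circ \gamma_{s})(\eta)$ and $\tau(\alpha_{\eta}) = H(t,\gamma(\eta)) = (H \circ \gamma_{t})(\eta)$. Fix $\eta_{0} \in I$ and an arbitrary basic neighbourhood $N([\alpha_{\eta_{0}}], \mathcal{U}, V, W)$ of $[\alpha_{\eta_{0}}]$, where $V$ is a neighbourhood of $x := (H \circ \gamma_{s})(\eta_{0})$ and $W$ a neighbourhood of $y := (H \circ \gamma_{t})(\eta_{0})$. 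It then suffices to exhibit an open interval $\mathfrak{I}_{\eta_{0}} \ni \eta_{0}$ in $I$ with $[\alpha_{\eta}] \in N([\alpha_{\eta_{0}}], \mathcal{U}, V, W)$ for every $\eta \in \mathfrak{I}_{\eta_{0}}$.

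First I would compare $\alpha_{\eta}$ with $\alpha_{\eta_{0}}$ up to homotopy, and this is exactly what Lemma \ref{path-res} and Lemma \ref{relative homotopy} are built to do. Taking $\eta > \eta_{0}$ (the case $\eta < \eta_{0}$ being symmetric) and applying Lemma \ref{path-res} with $[a,b] = [\eta_{0}, \eta]$, the restriction of $\chi_{\gamma,\kappa(s,t)}$ to $[\eta_{0},\eta] \times I$ is the free homotopy from $H \circ \gamma'_{s}$ to $H \circ \gamma'_{t}$, where $\gamma'(\xi) = \gamma(\eta_{0} + \xi(\eta - \eta_{0}))$, and its two connecting paths of Remark \ref{fhrem} are precisely $\chi_{\gamma,\kappa(s,t)} \circ \ell_{\eta_{0}} = \alpha_{\eta_{0}}$ and $\chi_{\gamma,\kappa(s,t)} \circ \ell_{\eta} = \alpha_{\eta}$. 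Feeding these into Lemma \ref{relative homotopy} yields $\alpha_{\eta_{0}}^{-1} \ast (H \circ \gamma'_{s}) \ast \alpha_{\eta} \sim H \circ \gamma'_{t}$, which rearranges to
$$\alpha_{\eta} \sim (H \circ \gamma'_{s})^{-1} \ast \alpha_{\eta_{0}} \ast (H \circ \gamma'_{t}).$$
Here $\lambda := (H \circ \gamma'_{s})^{-1}$ runs from $\iota(\alpha_{\eta})$ to $x$ and is the reverse of the restriction of $H \circ \gamma_{s}$ to $[\eta_{0},\eta]$, while $\rho := H \circ \gamma'_{t}$ runs from $y$ to $\tau(\alpha_{\eta})$ and is the restriction of $H \circ \gamma_{t}$ to $[\eta_{0},\eta]$.

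To finish, I would use that $H \circ \gamma_{s}$ and $H \circ \gamma_{t}$ are continuous paths $I \rightarrow X \times J$, being composites of the continuous $H$ of Theorem \ref{defret} with the continuous maps $\gamma_{s}, \gamma_{t}$ of Lemma \ref{ext-p}. Continuity at $\eta_{0}$ then furnishes a $\delta > 0$ such that, for $\mathfrak{I}_{\eta_{0}} = (\eta_{0} - \delta, \eta_{0} + \delta) \cap I$, one has $(H \circ \gamma_{s})(\mathfrak{I}_{\eta_{0}}) \subseteq V$ and $(H \circ \gamma_{t})(\mathfrak{I}_{\eta_{0}}) \subseteq W$. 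Consequently, for every $\eta \in \mathfrak{I}_{\eta_{0}}$ the path $\lambda$ sits inside $V$ and ends at $x$, while $\rho$ sits inside $W$ and starts at $y$; choosing the Spanier loops $\mu \in \pi_{1}(\mathcal{U}, x)$ and $\mu' \in \pi_{1}(\mathcal{U}, y)$ to be constant, the displayed relative homotopy $\alpha_{\eta} \sim \lambda \ast \alpha_{\eta_{0}} \ast \rho$ exhibits $[\alpha_{\eta}]$ as a member of $N([\alpha_{\eta_{0}}], \mathcal{U}, V, W)$, which is exactly what continuity requires.

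The hard part will be organisational rather than conceptual: one must keep track of orientation (which of $\eta, \eta_{0}$ is the left endpoint of the subinterval) and slot the connecting paths produced by Lemma \ref{path-res} into the correct $V$- and $W$-positions of the lasso neighbourhood, respecting the inverses dictated by the algebra in Lemma \ref{relative homotopy}. Once the homotopy $\alpha_{\eta} \sim \lambda \ast \alpha_{\eta_{0}} \ast \rho$ with $\lambda, \rho$ confined to $V$ and $W$ is in place, the remaining verification is routine.
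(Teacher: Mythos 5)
Your proof is correct and takes essentially the same route as the paper's: both arguments obtain the interval around $\eta_{0}$ from the continuity of $H \circ \gamma_{s}$ and $H \circ \gamma_{t}$ (pulling back $V$ and $W$), and both combine Lemma \ref{path-res} with Lemma \ref{relative homotopy} to get $\alpha_{\eta} \sim (H \circ \gamma'_{s})^{-1} \ast \alpha_{\eta_{0}} \ast (H \circ \gamma'_{t})$ with the two conjugating paths confined to $V$ and $W$, giving membership in $N([\alpha_{\eta_{0}}], \mathcal{U}, V, W)$ with trivial Spanier loops. The only differences are presentational (you do the homotopy algebra before choosing the interval, and relabel $\eta, \eta'$ as $\eta_{0}, \eta$), which are immaterial.
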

\begin{proof}
	Let $N([\chi_{\gamma,\kappa(s,t)} \circ \ell_{\eta}], \mathcal{U}, V, W)$ be any open neighbourhood of $[\chi_{\gamma,\kappa(s,t)} \circ \ell_{\eta}]$. Here $V$ is an open set belonging to $\mathcal{U}$ which contains $(\chi_{\gamma,\kappa(s,t)} \circ \ell_{\eta})(0)=(H \circ \gamma_{\kappa(s,t)(0)})(\eta)$, and similarly, $W$ is an open set belonging to $\mathcal{U}$ which contains $(\chi_{\gamma,\kappa(s,t)} \circ \ell_{\eta})(1)=(H \circ \gamma_{\kappa(s,t)(1)})(\eta)$. It follows from the continuity of $H \circ \gamma_{\kappa(s,t)(0)}$ that $(H \circ \gamma_{\kappa(s,t)(0)})^{-1}(V)$ is an open neighbourhood of $\eta$ in $I$, and similarly from the continuity of $H \circ \gamma_{\kappa(s,t)(1)}$ we have that $(H \circ \gamma_{\kappa(s,t)(1)})^{-1}(W)$ is another open neighbourhood of $\eta$ in $I$. Hence $\mathcal{O}=(H \circ \gamma_{\kappa(s,t)(0)})^{-1}(V) \cap (H \circ \gamma_{\kappa(s,t)(1)})^{-1}(W)$ is an open neighbourhood of $\eta$ in $I$. There is an open interval $\mathcal{I}_{\eta} \subseteq \mathcal{O}$ which contains $\eta$. We prove now that $\tilde{\chi}_{\gamma,\kappa(s,t)}(\mathcal{\mathcal{I}_{\eta}}) \subseteq N([\chi_{\gamma,\kappa(s,t)} \circ \ell_{\eta}], \mathcal{U}, V, W)$ from which the continuity of $\tilde{\chi}_{\gamma,\kappa(s,t)}$ follows. Let $\eta' \in \mathcal{\mathcal{I}_{\eta}}$ be arbitrary and assume that $\eta< \eta'$. We observe that for every $\eta \leq \eta" \leq \eta'$
	$$(H \circ \gamma_{\kappa(s,t)(0)})(\eta'') \in (H \circ \gamma_{\kappa(s,t)(0)})(\mathcal{\mathcal{I}_{\eta}})\subseteq (H \circ \gamma_{\kappa(s,t)(0)})((H \circ \gamma_{\kappa(s,t)(0)})^{-1}(V))=V,$$
	which proves that the image of the restriction of $H \circ \gamma_{\kappa(s,t)(0)}$ in $[\eta, \eta']$ lies inside $V$. Similarly, we see that
	$$(H \circ \gamma_{\kappa(s,t)(1)})(\eta'') \in (H \circ \gamma_{\kappa(s,t)(1)})(\mathcal{\mathcal{I}_{\eta}})\subseteq (H \circ \gamma_{\kappa(s,t)(1)})((H \circ \gamma_{\kappa(s,t)(1)})^{-1}(W))=W,$$
	which implies that the image of the restriction of $H \circ \gamma_{\kappa(s,t)(1)}$ in $[\eta, \eta']$ lies inside $W$. From Lemma \ref{free homotopy} the restrictions of the paths $H \circ \gamma_{\kappa(s,t)(0)}$ and $H \circ \gamma_{\kappa(s,t)(1)}$ in $[\eta, \eta']$ are homotopic, and from Lemma \ref{path-res}, the homotopy class of the path connecting $(H \circ \gamma_{\kappa(s,t)(0)})(\eta)$ with $(H \circ \gamma_{\kappa(s,t)(1)})(\eta)$ is $[\chi_{\gamma,\kappa(s,t)} \circ \ell_{\eta}]=\tilde{\chi}_{\gamma,\kappa(s,t)}(\eta)$. Similarly, the homotopy class of the path connecting $(H \circ \gamma_{\kappa(s,t)(0)})(\eta')$ with $(H \circ \gamma_{\kappa(s,t)(1)})(\eta')$ is $[\chi_{\gamma,\kappa(s,t)} \circ \ell_{\eta'}]=\tilde{\chi}_{\gamma,\kappa(s,t)}(\eta')$. Lemma \ref{relative homotopy} implies that 
	$$(\chi_{\gamma,\kappa(s,t)} \circ \ell_{\eta})^{-1} \ast (H \circ \gamma'_{\kappa(s,t)(0)}) \ast (\chi_{\gamma,\kappa(s,t)} \circ \ell_{\eta'}) \sim (H \circ \gamma'_{\kappa(s,t)(1)}),$$
whence
	$$(\chi_{\gamma,\kappa(s,t)} \circ \ell_{\eta'}) \sim (H \circ \gamma'_{\kappa(s,t)(0)})^{-1} \ast (\chi_{\gamma,\kappa(s,t)} \circ \ell_{\eta}) \ast (H \circ \gamma'_{\kappa(s,t)(1)}).$$
	But, from the first part of the lemma, $Im(H \circ \gamma'_{\kappa(s,t)(0)}) \subseteq V$ and $Im(H \circ \gamma'_{\kappa(s,t)(1)}) \subseteq W$, therefore
	$\tilde{\chi}_{\gamma,\kappa(s,t)}(\eta')=[\chi_{\gamma,\kappa(s,t)} \circ \ell_{\eta'}]\in N([\chi_{\gamma,\kappa(s,t)} \circ \ell_{\eta}], \mathcal{U}, V, W)$.
\end{proof}

\begin{lemma} \label{v-inv}
For every path $\gamma$, and every $s,t, \eta\in I$, $\chi_{\gamma,\kappa(s,t)} \circ \ell_{\eta}=(\chi_{\gamma,\kappa(t,s)} \circ \ell_{\eta})^{-1}$.
\end{lemma}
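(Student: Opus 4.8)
The plan is to reduce the stated equality of two paths in $X \times J$ to a single affine identity in the parameter by simply unwinding the definitions on both sides. Recall that $\kappa(s,t)$ is the affine map $x \mapsto (t-s)x + s$ and that, by definition, $(\chi_{\gamma,\kappa(s,t)} \circ \ell_{\eta})(x) = H(\kappa(s,t)(x), \gamma(\eta))$. Thus, for the fixed point $\gamma(\eta) \in X \times J$, if I set $g_{\eta} \colon I \to X \times J$ by $g_{\eta}(r) = H(r, \gamma(\eta))$, then both paths in the lemma are of the form $g_{\eta}$ precomposed with an affine reparametrisation of $I$: the left-hand side is $g_{\eta} \circ \kappa(s,t)$ and the right-hand side is $(g_{\eta} \circ \kappa(t,s))^{-1}$.

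First I would record the elementary fact that $\kappa(t,s)$ is the reverse of $\kappa(s,t)$ as a path in $I$. With the standard convention $p^{-1}(x) = p(1-x)$ for path reversal, this is the computation
\[
\kappa(t,s)(1-x) = (s-t)(1-x) + t = s + (t-s)x = \kappa(s,t)(x),
\]
valid for every $x \in I$; equivalently $\kappa(t,s)^{-1} = \kappa(s,t)$. Given this, the result follows from the general fact that precomposition by a continuous map commutes with reversal, namely $(g_{\eta} \circ p)^{-1} = g_{\eta} \circ p^{-1}$, applied with $p = \kappa(t,s)$:
\[
(\chi_{\gamma,\kappa(t,s)} \circ \ell_{\eta})^{-1} = (g_{\eta} \circ \kappa(t,s))^{-1} = g_{\eta} \circ \kappa(t,s)^{-1} = g_{\eta} \circ \kappa(s,t) = \chi_{\gamma,\kappa(s,t)} \circ \ell_{\eta}.
\]
Alternatively, and perhaps more transparently, I would just verify the equality pointwise: the value of the right-hand side at $x$ is $(\chi_{\gamma,\kappa(t,s)} \circ \ell_{\eta})(1-x) = H(\kappa(t,s)(1-x), \gamma(\eta))$, which by the displayed affine identity equals $H(\kappa(s,t)(x), \gamma(\eta))$, the value of the left-hand side at $x$.

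There is essentially no serious obstacle here; the only point requiring care is bookkeeping the reversal convention, since the identity hinges on the interplay between reversing $\kappa$ (swapping the roles of $s$ and $t$) and reversing the parameter $x \mapsto 1-x$. Once the affine identity $\kappa(t,s)(1-x) = \kappa(s,t)(x)$ is in hand the conclusion is immediate, and no continuity or homotopy arguments are needed, because the two paths are literally the same function on $I$ rather than merely homotopic.
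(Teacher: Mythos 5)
Your proposal is correct and is essentially the paper's own argument: the paper proves the lemma by exactly the pointwise computation you give in your ``alternative'' paragraph, namely $(\chi_{\gamma,\kappa(t,s)} \circ \ell_{\eta})^{-1}(x) = H(\kappa(t,s)(1-x),\gamma(\eta)) = H(\kappa(s,t)(x),\gamma(\eta)) = (\chi_{\gamma,\kappa(s,t)} \circ \ell_{\eta})(x)$. Your preliminary repackaging via $g_{\eta}$ and the fact that reversal commutes with precomposition is just a tidy restatement of that same computation, so nothing essential differs.
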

\begin{proof}
For every $x \in I$ we have that
\begin{align*}
(\chi_{\gamma,\kappa(t,s)} \circ \ell_{\eta})^{-1}(x)&= (\chi_{\gamma,\kappa(t,s)} \circ \ell_{\eta})(1-x)\\
&=H(\kappa(t,s)(1-x), \gamma(\eta))\\
&=H(\kappa(s,t)(x), \gamma(\eta))\\
&=(\chi_{\gamma,\kappa(s,t)} \circ \ell_{\eta})(x),
\end{align*}
therefore we have the equality.
\end{proof}

\begin{lemma}
Let $Y$ be any space and $\pi^{l}(Y)$ its fundamental groupoid equipped with Lasso topology. The map $\boldsymbol{\iota}: \pi^{l}(Y) \rightarrow \pi^{l}(Y)$ defined by $[p] \mapsto [p^{-1}]$ is continuous.
\end{lemma}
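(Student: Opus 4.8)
The plan is to exhibit, for each basic Lasso neighbourhood of $\boldsymbol{\iota}([p])=[p^{-1}]$, a basic Lasso neighbourhood of $[p]$ that $\boldsymbol{\iota}$ carries into it; since the sets $N([\alpha],\mathcal{U},V,W)$ form a basis for $\pi^{l}(Y)$, this suffices for continuity. Write $\iota(p)=x$ and $\tau(p)=y$, so that $p^{-1}$ runs from $y$ to $x$. A basic neighbourhood of $[p^{-1}]$ then has the form $N([p^{-1}],\mathcal{U},W,V)$, where $W\in\mathcal{U}$ is a neighbourhood of the initial point $y$ and $V\in\mathcal{U}$ a neighbourhood of the terminal point $x$. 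With these same $V,W$ I would consider the basic neighbourhood $N([p],\mathcal{U},V,W)$ of $[p]$, in which $V$ sits around the initial point $x$ and $W$ around the terminal point $y$; note that $[p]$ itself belongs to it, obtained by taking all of $\gamma,\mu,\mu',\lambda$ trivial.

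Next I would establish the inclusion $\boldsymbol{\iota}\bigl(N([p],\mathcal{U},V,W)\bigr)\subseteq N([p^{-1}],\mathcal{U},W,V)$. Take $[\beta]\in N([p],\mathcal{U},V,W)$, so that
$$\beta \sim \gamma \ast \mu \ast p \ast \mu' \ast \lambda,$$
with $\gamma$ a path in $V$ satisfying $\gamma(1)=x$, $\mu\in\pi_{1}(\mathcal{U},x)$, $\mu'\in\pi_{1}(\mathcal{U},y)$, and $\lambda$ a path in $W$ satisfying $\lambda(0)=y$. Since homotopy relative to endpoints is preserved under path reversal, I would invert both sides and use the standard reversal-of-concatenation rule to obtain
$$\beta^{-1} \sim \lambda^{-1} \ast (\mu')^{-1} \ast p^{-1} \ast \mu^{-1} \ast \gamma^{-1}.$$
The crux is that this decomposition has exactly the shape required for membership in $N([p^{-1}],\mathcal{U},W,V)$, once $p^{-1}$ is read as the distinguished representative.

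To confirm this I would verify the five factors against the definition. The leading factor $\lambda^{-1}$ is a path in $W$ (its image is unchanged by reversal) with $\lambda^{-1}(1)=\lambda(0)=y=\iota(p^{-1})$, as demanded of the front path. The trailing factor $\gamma^{-1}$ is a path in $V$ with $\gamma^{-1}(0)=\gamma(1)=x=\tau(p^{-1})$, as demanded of the back path. Finally, because the Spanier group $\pi_{1}(\mathcal{U},z)$ is a subgroup of $\pi_{1}(Y,z)$ and hence closed under inverses, one has $(\mu')^{-1}\in\pi_{1}(\mathcal{U},y)$ and $\mu^{-1}\in\pi_{1}(\mathcal{U},x)$, which are precisely the two Spanier factors required at $y$ and $x$. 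Thus $\boldsymbol{\iota}([\beta])=[\beta^{-1}]\in N([p^{-1}],\mathcal{U},W,V)$, which gives the inclusion and therefore the continuity of $\boldsymbol{\iota}$.

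I do not expect a serious obstacle here; the only point requiring care is the bookkeeping of the two neighbourhoods $V$ and $W$, whose roles get interchanged by inversion because reversal swaps the initial and terminal points of $p$. Since $\boldsymbol{\iota}$ is an involution, applying the same argument to $p^{-1}$ in place of $p$ shows that $\boldsymbol{\iota}$ is in fact a homeomorphism, although only continuity is asserted in the statement.
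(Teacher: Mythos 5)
Your proof is correct and follows essentially the same route as the paper's: given a basic neighbourhood of $[p^{-1}]$, swap the roles of the two open sets to get a basic neighbourhood of $[p]$, invert the defining decomposition $\beta \sim \gamma \ast \mu \ast p \ast \mu' \ast \lambda$, and use that Spanier groups are closed under inverses and that reversal preserves images of paths. The only (inessential) difference is that the paper additionally spells out well-definedness of $\boldsymbol{\iota}$ (that $p\sim q$ implies $p^{-1}\sim q^{-1}$), which you invoke as a standard fact.
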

\begin{proof}
First we show that $\boldsymbol{\iota}$ is well defined. Indeed, if $p \sim q$, then $p \ast q^{-1} \sim c_{q(0)}=c_{p(0)}$ and $p^{-1} \ast p \ast q^{-1} \sim p^{-1} \ast c_{p(0)}$. But $p^{-1} \ast p \sim c_{p(1)}=c_{q^{-1}(0)}$, therefore $p^{-1} \sim q^{-1}$. Now we prove the continuity of $\boldsymbol{\iota}$. For every $[p] \in \pi^{l}(Y)$, let $N([p^{-1}], \mathcal{U},V,W])$ be an open set containing $[p^{-1}]=\boldsymbol{\iota}([p])$. Then $p^{-1}(0) \in V$ and $p^{-1}(1)\in W$. It follows from this that $N([p], \mathcal{U},W, V)$ is an open neighbourhood of $[p]$. For every $[q] \in N([p], \mathcal{U},W, V)$, $q(0) \in W$ and $q(1) \in V$, hence $q^{-1}(0)=q(1) \in V$ and $q^{-1}(1)=q(0) \in W$. Since $[q] \in N([p], \mathcal{U},W, V)$, then 
$$q \sim \lambda \ast \mu_{1} \ast p \ast \mu_{2} \ast \rho,$$
where $\lambda$ is a path in $W$ with $\lambda(1)=p(0)$, $\mu_{1}\in \pi_{1}(\mathcal{U}, p(0))$, $\mu_{2}\in \pi_{1}(\mathcal{U}, p(1))$ and $\rho$ is a path in $V$ with $\rho(0)=p(1)$. These conditions imply
$$q^{-1} \sim \rho^{-1} \ast \mu^{-1}_{2} \ast p^{-1} \ast \mu^{-1}_{1} \ast \lambda^{-1},$$
where $\rho^{-1}$ is a path in $V$ with $\rho^{-1}(1)=\rho(0)=p(1)=p^{-1}(0)$,
$\mu^{-1}_{2}\in \pi_{1}(\mathcal{U}, p^{-1}(0))$, $\mu^{-1}_{1}\in \pi_{1}(\mathcal{U}, p^{-1}(1))$ and $\lambda^{-1}$ is a path in $W$ with $\lambda^{-1}(0)=\lambda(1)=p(0)=p^{-1}(1)$. From these we get that $[q^{-1}] \in N([p^{-1}], \mathcal{U}, V, W)$ proving the continuity of $\boldsymbol{\iota}$.
\end{proof}

\begin{lemma}
The map $\boldsymbol{\iota}: \pi^{l}(X \times J) \rightarrow \pi^{l}(X \times J)$ induces a covariant functor $\pi(\boldsymbol{\iota}): \pi(\pi^{l}(X \times J)) \rightarrow \pi(\pi^{l}(X \times J))$ of groupoids. 
\end{lemma}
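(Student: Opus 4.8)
The plan is to recognise this as an instance of the functoriality of the fundamental groupoid construction $\pi(-)\colon \mathbf{Top} \to \mathbf{Grpd}$, applied to the continuous map $\boldsymbol{\iota}$ whose continuity was established in the preceding lemma. Concretely, I would define $\pi(\boldsymbol{\iota})$ on objects by sending a point $[\alpha]$ of $\pi^{l}(X \times J)$ to $\boldsymbol{\iota}([\alpha]) = [\alpha^{-1}]$, and on morphisms by sending the homotopy class $[P]$ of a path $P\colon I \to \pi^{l}(X \times J)$ to $[\boldsymbol{\iota} \circ P]$. Here one must keep in mind that a morphism of $\pi(\pi^{l}(X \times J))$ is a homotopy class of a path \emph{in the topological space} $\pi^{l}(X \times J)$, not a single homotopy class of a path in $X \times J$.

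First I would check well-definedness. Since $\boldsymbol{\iota}$ is continuous, for any path $P$ in $\pi^{l}(X \times J)$ the composite $\boldsymbol{\iota} \circ P$ is again a path, so the assignment makes sense at the level of paths. If $P$ and $Q$ are homotopic relative to their endpoints via a homotopy $F\colon I \times I \to \pi^{l}(X \times J)$, then $\boldsymbol{\iota} \circ F$ is a homotopy relative to endpoints between $\boldsymbol{\iota} \circ P$ and $\boldsymbol{\iota} \circ Q$; hence the assignment descends to homotopy classes. On objects there is nothing further to verify, since $\boldsymbol{\iota}$ is already a well-defined map of the underlying space.

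Next I would verify the functor axioms. For identities, the constant path $c_{[\alpha]}$ at an object $[\alpha]$ satisfies $\boldsymbol{\iota} \circ c_{[\alpha]} = c_{\boldsymbol{\iota}([\alpha])}$, so identity morphisms are preserved. For composition, if $P$ and $Q$ are composable paths in $\pi^{l}(X \times J)$, then directly from the definition of concatenation one has $\boldsymbol{\iota} \circ (P \ast Q) = (\boldsymbol{\iota} \circ P) \ast (\boldsymbol{\iota} \circ Q)$, so that $\pi(\boldsymbol{\iota})([P][Q]) = \pi(\boldsymbol{\iota})([P])\,\pi(\boldsymbol{\iota})([Q])$ in the \emph{same} order. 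This last point is precisely what makes the induced functor covariant rather than contravariant, and it is worth stressing because $\boldsymbol{\iota}$ itself is an inversion map on $\pi^{l}(X \times J)$; the order reversal lives in the groupoid $\pi^{l}(X \times J)$ and does not propagate to the fundamental groupoid $\pi(\pi^{l}(X \times J))$.

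Since every step reduces to the standard behaviour of post-composition with a continuous map, I do not anticipate a genuine obstacle. The only matter requiring care is to keep the two roles of $\boldsymbol{\iota}$ cleanly separated: as a self-map of the topological space $\pi^{l}(X \times J)$ it is what drives the functoriality, while its description as $[\alpha] \mapsto [\alpha^{-1}]$ records merely its action on objects of the fundamental groupoid.
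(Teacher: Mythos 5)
Your proposal is correct and follows essentially the same route as the paper's proof: define $\pi(\boldsymbol{\iota})$ on objects by $[\gamma]\mapsto[\gamma^{-1}]$ and on morphisms by $[p]\mapsto[\boldsymbol{\iota}\circ p]$, then verify preservation of identities and of composition. In fact you are slightly more thorough, since you explicitly check that the assignment descends to homotopy classes of paths in $\pi^{l}(X\times J)$ (using continuity of $\boldsymbol{\iota}$), a well-definedness step the paper leaves implicit.
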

\begin{proof}
For every object $[\gamma] \in \pi^{l}(X \times J)$ we define 
$\pi(\boldsymbol{\iota})([\gamma])=[\gamma^{-1}]$ and for every homotopy class $[p]$ of a path $p$ in $\pi^{l}(X \times J)$, we let $\pi(\boldsymbol{\iota})([p])=[\boldsymbol{\iota} \circ p]$. Letting $e_{[\gamma]}=[c_{[\gamma]}]$ be the unit at $[\gamma]$, we see that
$$\pi(\boldsymbol{\iota})(e_{[\gamma]})=\pi(\boldsymbol{\iota})([c_{[\gamma]}])=[\boldsymbol{\iota}\circ c_{[\gamma]}]=[c_{[\gamma^{-1}]}]=e_{\pi(\boldsymbol{\iota})([\gamma])}.$$
For every two composable morphisms $[p],[q] \in \pi(\pi^{l}(X \times J))$ we see that
\begin{align*}
\pi(\boldsymbol{\iota})([p] \ast [q])&=\pi(\boldsymbol{\iota})([p \ast q])= [\boldsymbol{\iota} \circ (p \ast q)]=[(\boldsymbol{\iota} \circ p) \ast (\boldsymbol{\iota}\circ q)]\\
&=[\boldsymbol{\iota} \circ p] \ast [\boldsymbol{\iota} \circ q]= \pi(\boldsymbol{\iota})([p]) \ast \pi(\boldsymbol{\iota})([q]).
\end{align*}
proving the functoriality.
\end{proof}

\begin{definition}
Let $X$ be the discrete groupoid and $X \times \pi^{l}(X \times J)$ the obvious direct product groupoid. For every fuzzy subset $F$ we define $[F]: X \times \pi^{l}(X \times J) \rightarrow \pi(\pi^{l}(X \times J))$ on objects by $[F](y, (z,\beta))=[p_{F,(y,(z,\beta))}]$ where $p_{F,(y,(z,\beta))}$ is the path in $X \times J$ given by $p_{F,(y,(z,\beta))}(u)= H(\kappa(F(y),1-F(y))(u), (z,\beta))$, and on morphisms by $[F](y, [\gamma])=[\tilde{\chi}_{\gamma, \kappa( F(y),1-F(y))}]$.
\end{definition}

\begin{lemma} \label{constant}
	For every fuzzy subset $G$ and every $(y, [c_{(z,\beta)}]) \in X \times \pi^{l}(X \times J)$, the path $\tilde{\chi}_{c_{(z,\beta)}, \kappa(G(y), 1-G(y))}$ is the constant path $c_{[p_{G,(y,(z,\beta))}]}$ at $[p_{G,(y,(z,\beta))}]$.
\end{lemma}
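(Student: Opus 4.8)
The plan is to unwind the two definitions involved and exploit the fact that the path $c_{(z,\beta)}$ is constant, which forces the family of loops produced by the free homotopy to be independent of the parameter $\eta$. Recall that by definition $\tilde{\chi}_{c_{(z,\beta)}, \kappa(G(y),1-G(y))}(\eta) = [\chi_{c_{(z,\beta)}, \kappa(G(y),1-G(y))} \circ \ell_{\eta}]$, and that, directly from the definitions of $\ell_{\eta}$ and of $\chi_{\gamma,\kappa(s,t)}$, one has $(\chi_{\gamma,\kappa(s,t)} \circ \ell_{\eta})(x) = H(\kappa(s,t)(x), \gamma(\eta))$ for an arbitrary path $\gamma$.

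First I would substitute $\gamma = c_{(z,\beta)}$, $s = G(y)$ and $t = 1-G(y)$ into this rule. For every $\eta, x \in I$ this gives
$$(\chi_{c_{(z,\beta)}, \kappa(G(y),1-G(y))} \circ \ell_{\eta})(x) = H(\kappa(G(y),1-G(y))(x), c_{(z,\beta)}(\eta)) = H(\kappa(G(y),1-G(y))(x), (z,\beta)),$$
the last equality holding because $c_{(z,\beta)}(\eta) = (z,\beta)$ for every $\eta$. The decisive observation is that the right-hand side no longer depends on $\eta$, so $\chi_{c_{(z,\beta)}, \kappa(G(y),1-G(y))} \circ \ell_{\eta}$ is literally the same path for all values of $\eta$.

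Next I would match this common path with $p_{G,(y,(z,\beta))}$, whose defining rule is precisely $p_{G,(y,(z,\beta))}(u) = H(\kappa(G(y),1-G(y))(u), (z,\beta))$; this coincides with the expression just computed after the harmless renaming of the running variable. Consequently $\tilde{\chi}_{c_{(z,\beta)}, \kappa(G(y),1-G(y))}(\eta) = [p_{G,(y,(z,\beta))}]$ for every $\eta \in I$, so the map $\tilde{\chi}_{c_{(z,\beta)}, \kappa(G(y),1-G(y))}$ takes the single value $[p_{G,(y,(z,\beta))}]$ and is therefore the constant path $c_{[p_{G,(y,(z,\beta))}]}$, exactly as asserted.

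There is essentially no obstacle here: the whole content is the cancellation of $\eta$ caused by the constancy of $\gamma = c_{(z,\beta)}$, and a constant map into $\pi^{l}(X \times J)$ is automatically a (continuous) path, so continuity need not be argued separately --- it is in any case a special case of Lemma \ref{t-chi}. The only thing demanding care is the bookkeeping of parameters, namely verifying that $(s,t) = (G(y),1-G(y))$ is inserted correctly and that the formula for $\chi_{\gamma,\kappa(s,t)} \circ \ell_{\eta}$ lines up with the definition of $p_{G,(y,(z,\beta))}$.
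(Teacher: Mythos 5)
Your proposal is correct and follows essentially the same route as the paper's own proof: unwind the definitions, observe that $(\chi_{c_{(z,\beta)}, \kappa(G(y),1-G(y))} \circ \ell_{\eta})(u) = H(\kappa(G(y),1-G(y))(u), (z,\beta))$ is independent of $\eta$, and identify this common path with $p_{G,(y,(z,\beta))}$. If anything, you are slightly more explicit than the paper in matching the common path to the defining formula of $p_{G,(y,(z,\beta))}$, which the paper leaves as an implicit final step.
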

\begin{proof}
	For every $\eta \in I$, 
	\begin{equation} \label{eq-const}
		(\tilde{\chi}_{c_{(z,\beta)}, \kappa(G(y), 1-G(y))})(\eta)=[\chi_{c_{(z,\beta)}, \kappa(G(y),1-G(y))} \circ \ell_{\eta}].
	\end{equation} 
	To prove the claim we show that paths $\chi_{c_{(z,\beta)}, \kappa(G(y),1-G(y))} \circ \ell_{\eta}$ in $X \times J$ with $\eta$ varying in $I$ coincide. Indeed, for every $u \in I$, 
	\begin{align*}
		(\chi_{c_{(z,\beta)}, \kappa(G(y),1-G(y))} \circ \ell_{\eta})(u)&= H(\kappa(G(y),1-G(y))(u), c_{(z,\beta)}(\eta))\\
		&=H(\kappa(G(y),1-G(y))(u), (z,\beta)), \stepcounter{equation}\tag{\theequation}\label{H}
	\end{align*}
	where the right hand side is independent of $\eta$. The second part of the lemma follows from (\ref{eq-const}) and (\ref{H}).
\end{proof}

\begin{proposition}
For every fuzzy subset $F$, $[F]$ is a covariant functor of groupoids.
\end{proposition}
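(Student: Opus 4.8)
The plan is to observe that, once a first coordinate $y \in X$ is fixed, the assignment $[F](y,-)$ is precisely the functor induced on fundamental groupoids by a single continuous map, and then to deduce the three functor axioms from the standard functoriality of the construction $\pi(-)$. Concretely, writing $s=F(y)$ and $t=1-F(y)$, I would introduce the map $\Phi_{y}\colon X\times J \to \pi^{l}(X\times J)$ sending a point $w$ to the homotopy class of the path $x\mapsto H(\kappa(s,t)(x),w)$; thus on objects $\Phi_{y}(w)=[p_{F,(y,w)}]=[F](y,w)$. For a path $\gamma$ in $X\times J$, the formula preceding Lemma \ref{path-res} gives $(\chi_{\gamma,\kappa(s,t)}\circ\ell_{\eta})(x)=H(\kappa(s,t)(x),\gamma(\eta))$, whence $\tilde{\chi}_{\gamma,\kappa(s,t)}(\eta)=\Phi_{y}(\gamma(\eta))$, i.e. $\tilde{\chi}_{\gamma,\kappa(s,t)}=\Phi_{y}\circ\gamma$. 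Hence on morphisms $[F](y,[\gamma])=[\Phi_{y}\circ\gamma]$, so that $[F](y,-)$ agrees with $\pi(\Phi_{y})$ on both objects and morphisms, and the whole statement will follow as soon as $\Phi_{y}$ is continuous.

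The hard part, and the only genuinely new content, is proving that $\Phi_{y}$ is continuous; note that Lemma \ref{t-chi} supplies only continuity of $\Phi_{y}\circ\gamma$ along each individual path $\gamma$, which is strictly weaker. To prove continuity at a point $w_{0}$, I would fix a basic Lasso neighbourhood $N([p_{F,(y,w_{0})}],\mathcal{U},V,W)$, so that $V,W\in\mathcal{U}$ with $H(s,w_{0})=p_{F,(y,w_{0})}(0)\in V$ and $H(t,w_{0})=p_{F,(y,w_{0})}(1)\in W$. Since $H(s,-)$ and $H(t,-)$ are continuous, the preimages $\{w:H(s,w)\in V\}$ and $\{w:H(t,w)\in W\}$ are open neighbourhoods of $w_{0}$, and, using local path connectedness of $X\times J$ from Proposition \ref{pc+lpc}, I would take a path connected open set $O\ni w_{0}$ contained in their intersection. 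For $w\in O$ choose a path $c$ in $O$ from $w_{0}$ to $w$; then $c_{s}:=H(s,c(-))$ is a path in $V$ and $c_{t}:=H(t,c(-))$ is a path in $W$. The map $(x,\tau)\mapsto H(\kappa(s,t)(x),c(\tau))$ is a free homotopy from $p_{F,(y,w_{0})}$ to $p_{F,(y,w)}$ whose left and right sides are $c_{s}$ and $c_{t}$, so the square argument underlying Lemma \ref{relative homotopy} yields
\[ p_{F,(y,w)} \sim c_{s}^{-1}\ast p_{F,(y,w_{0})}\ast c_{t} \]
relative to endpoints. Taking the Spanier loops trivial, this exhibits $\Phi_{y}(w)=[p_{F,(y,w)}]$ as an element of $N([p_{F,(y,w_{0})}],\mathcal{U},V,W)$; hence $\Phi_{y}(O)$ is contained in this neighbourhood and $\Phi_{y}$ is continuous.

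With continuity established, I would conclude by the standard functoriality of $\pi(-)$. For fixed $y$, the map $\pi(\Phi_{y})$ is well defined on morphisms because a homotopy $\gamma\sim\gamma'$ rel endpoints composes with the continuous $\Phi_{y}$ to a homotopy $\Phi_{y}\circ\gamma\sim\Phi_{y}\circ\gamma'$ rel endpoints; it preserves sources and targets automatically, since $\tilde{\chi}_{\gamma,\kappa(s,t)}(0)=\Phi_{y}(\gamma(0))=[F](y,\gamma(0))$ and likewise at $1$; it preserves identities because $\Phi_{y}\circ c_{w}=c_{\Phi_{y}(w)}$, which is exactly Lemma \ref{constant}; and it preserves composition because $\Phi_{y}\circ(\gamma\ast\delta)=(\Phi_{y}\circ\gamma)\ast(\Phi_{y}\circ\delta)$, i.e. $\tilde{\chi}_{\gamma\ast\delta,\kappa(s,t)}=\tilde{\chi}_{\gamma,\kappa(s,t)}\ast\tilde{\chi}_{\delta,\kappa(s,t)}$, an equality that is immediate from the piecewise definition of concatenation. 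Finally, these fibrewise functors assemble into a functor on the product groupoid $X\times\pi^{l}(X\times J)$: because $X$ is a discrete groupoid, any two composable morphisms of the product necessarily share the same first coordinate $y$, so no compatibility between different values of $y$, and hence of the pair $(F(y),1-F(y))$, is ever needed. This shows that $[F]$ is a covariant functor of groupoids.
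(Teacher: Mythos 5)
Your proof is correct, and it takes a genuinely different route from the paper's. Your key move --- identifying $[F](y,-)$ with $\pi(\Phi_{y})$ for the single map $\Phi_{y}\colon w\mapsto[p_{F,(y,w)}]$, via the identity $\tilde{\chi}_{\gamma,\kappa(F(y),1-F(y))}=\Phi_{y}\circ\gamma$ --- does not appear in the paper. The paper instead verifies each functor axiom by hand: well-definedness on morphisms is proved by constructing, from a homotopy $\varphi$ between $\gamma$ and $\delta$, the map $\tilde{\varphi}(\eta,t)=[\chi_{\varphi(\bullet,t),\kappa(F(y),1-F(y))}\circ\ell_{\eta}]$ and establishing its continuity through a long computation with balls in $I\times I$, Lemmas \ref{path-res} and \ref{relative homotopy}, and Lemma 2.3 of \cite{LT}; preservation of identities is Lemma \ref{constant}; preservation of composition is a direct comparison of concatenation formulas. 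In your version all of these checks are absorbed into the standard functoriality of the fundamental-groupoid construction applied to a continuous map, and the entire burden falls on the continuity of $\Phi_{y}$, which you prove correctly: the rearrangement $p_{F,(y,w)}\sim c_{s}^{-1}\ast p_{F,(y,w_{0})}\ast c_{t}$ coming from the homotopy square of Lemma \ref{relative homotopy}, with trivial Spanier loops, is exactly what membership of $\Phi_{y}(w)$ in $N([p_{F,(y,w_{0})}],\mathcal{U},V,W)$ requires, and your appeal to local path connectedness via Proposition \ref{pc+lpc} is legitimate because $(X,\iota_{X}(\mathcal{T}))$ being p.c.\ and l.p.c.\ is a standing hypothesis of this part of the paper (it is needed for $\pi^{l}(X\times J)$ to exist at all). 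You are also right that Lemma \ref{t-chi} only gives continuity of $\Phi_{y}$ along individual paths, so your continuity claim is strictly stronger and, once proved, re-proves that lemma. What each approach buys: yours is shorter, more conceptual, and isolates a reusable statement (continuity of $\Phi_{y}$); the paper's hands-on computation never needs continuity of $\Phi_{y}$ as a map on the whole space and does not invoke local path connectedness inside this particular proof, at the cost of considerable length and of re-doing, in this special case, what is really the functoriality of $\pi(-)$.
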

\begin{proof}
First we prove the correctness of $[F]$. Let $\gamma \sim \delta$ and we want to show that for every $y \in X$, $\tilde{\chi}_{\gamma, \kappa(F(y), 1-F(y))} \sim \tilde{\chi}_{\delta, \kappa(F(y), 1-F(y))}$. If $\varphi:I \times I\rightarrow X \times J$ is a homotopy between $\gamma$ and $\delta$, then for every $t \in I$, the restriction $\varphi(\bullet,t)$ of $\varphi$ on $I \times \{t\}$ is a path in $X \times J$ from $\gamma(0)=\delta(0)$ to $\gamma(1)=\delta(1)$. We let $\tilde{\varphi}: I \times I \rightarrow \pi^{l}(X \times J)$ be the map given by
$$\tilde{\varphi}(\eta, t)=\tilde{\chi}_{\varphi(\bullet,t), F(y), 1-F(y)}(\eta)=[\chi_{\varphi(\bullet, t), \kappa(F(y), 1-F(y))} \circ \ell_{\eta}].$$
This map is a homotopy between $\tilde{\chi}_{\gamma, \kappa(F(y), 1-F(y))}$ and $\tilde{\chi}_{\delta, \kappa(F(y), 1-F(y))}$. Indeed, 
$$\tilde{\varphi}(\eta, 0)=[\chi_{\varphi(\bullet, 0), \kappa(F(y), 1-F(y))} \circ \ell_{\eta}]=[\chi_{\gamma, \kappa(F(y), 1-F(y))} \circ \ell_{\eta}]=(\tilde{\chi}_{\gamma, \kappa(F(y), 1-F(y))})(\eta),$$
and 
$$\tilde{\varphi}(\eta, 1)=[\chi_{\varphi(\bullet, 1), \kappa(F(y), 1-F(y))} \circ \ell_{\eta}]=[\chi_{\delta, \kappa(F(y), 1-F(y))} \circ \ell_{\eta}]=(\tilde{\chi}_{\delta, \kappa(F(y), 1-F(y))})(\eta).$$
Also for every $t \in I$,
$$\tilde{\varphi}(0, t)= [\chi_{\varphi(\bullet, t), \kappa(F(y), 1-F(y))} \circ \ell_{0}]=[\chi_{\gamma, \kappa(F(y), 1-F(y))} \circ \ell_{0}]=(\tilde{\chi}_{\gamma, \kappa(F(y),1-F(y))})(0),$$
where the second equality holds true since for every $u \in I$,
\begin{align*}
(\chi_{\varphi(\bullet, t), \kappa(F(y), 1-F(y))} \circ \ell_{0})(u)&=H(\kappa(F(y), 1-F(y))(u), \varphi(0,t))\\
&=H(\kappa(F(y), 1-F(y))(u), \gamma(0))\\
&=(\chi_{\gamma, \kappa(F(y), 1-F(y))} \circ \ell_{0})(u).
\end{align*}
In a similar way one can check that
$$\tilde{\varphi}(1, t)=(\tilde{\chi}_{\gamma, \kappa(F(y),1-F(y))})(1).$$
It remains now to prove the continuity of $\tilde{\varphi}$. Let $(\eta, t) \in I \times I$ be arbitrary and $N([\chi_{\varphi(\bullet, t), \kappa(F(y), 1-F(y))} \circ \ell_{\eta}], \mathcal{U}, V, W)$ be any open neighbourhood of $[\chi_{\varphi(\bullet, t), \kappa(F(y), 1-F(y))} \circ \ell_{\eta}]$ in $\pi^{l}(X \times J)$. It follows that $V$ is an open neighbourhood of 
$$(\chi_{\varphi(\bullet, t), \kappa(F(y), 1-F(y))} \circ \ell_{\eta})(0)=H(F(y), \varphi(\eta, t)),$$
and that $W$ is an open neighbourhood of 
$$(\chi_{\varphi(\bullet, t), \kappa(F(y), 1-F(y))} \circ \ell_{\eta})(1)=H(1-F(y), \varphi(\eta, t)).$$
The continuity of $H$ implies that there is an open $A$ in $I$ and an open $B$ in $X \times J$ such that $F(y) \in A$, $\varphi(\eta, t) \in B$ and that $A \times B \subseteq H^{-1}(V)$. Similarly, there is an open $A'$ in $I$ and an open $B'$ in $X \times J$ such that $1-F(y) \in A'$, $\varphi(\eta, t) \in B'$ and that $A' \times B' \subseteq H^{-1}(W)$. Consequently $B \cap B'$ is an open neighbourhood of $\varphi(\eta, t)$ in $X \times J$. The continuity of $\varphi$ implies that $\varphi^{-1}(B \cap B')$ is an open neighbourhood of $(\eta, t)$ in $I \times I$. We can chose some $r>0$ such that the open ball $B((\eta, t), r)$ with center $(\eta, t)$ and radius $r$ is included in $\varphi^{-1}(B \cap B')$. We prove that $\tilde{\varphi}(B((\eta, t), r)) \subseteq N([\chi_{\varphi(\bullet, t), \kappa(F(y), 1-F(y))} \circ \ell_{\eta}], \mathcal{U}, V, W)$ which implies the continuity of $\tilde{\varphi}$. Let $(\eta', t') \in B((\eta, t), r)$ and want to prove that 
\begin{equation} \label{tf}
\tilde{\varphi}(\eta', t') = [\chi_{\varphi(\bullet, t'), \kappa(F(y), 1-F(y))} \circ \ell_{\eta'}] \in N([\chi_{\varphi(\bullet, t), \kappa(F(y), 1-F(y))} \circ \ell_{\eta}], \mathcal{U}, V, W).
\end{equation}
The homotopy $\varphi$ induces a path $\varphi(\eta', \bullet): I \rightarrow X \times J$ by the rule $t'' \mapsto \varphi(\eta', t'')$ which in turn induces two paths, $H \circ \varphi(\eta', \bullet)_{F(y)}$ and $H \circ \varphi(\eta', \bullet)_{1-F(y)}$ which are freely homotopic. Under the assumption that $t'<t$, Lemma \ref{path-res} implies that the restrictions $\alpha$ and $\alpha'$ of respectively $H \circ \varphi(\eta', \bullet)_{F(y)}$ and $H \circ \varphi(\eta', \bullet)_{1-F(y)}$ on $[t',t]$ are also freely homotopic, and the path which connects $H(F(y), \varphi(\eta',t'))$ with $H(1-F(y), \varphi(\eta',t'))$ is $\chi_{\varphi(\eta', \bullet), \kappa(F(y), 1-F(y))} \circ \ell_{t'}$, and the path which connects $H(F(y), \varphi(\eta', t))$ with $H(1-F(y), \varphi(\eta', t))$ is $\chi_{\varphi(\eta', \bullet), \kappa(F(y), 1-F(y))} \circ \ell_{t}$. It is easy to see that  
$$\chi_{\varphi(\eta', \bullet), \kappa(F(y), 1-F(y))} \circ \ell_{t'}= \chi_{\varphi(\bullet, t'), \kappa(F(y), 1-F(y))} \circ \ell_{\eta'},$$
and 
$$\chi_{\varphi(\eta', \bullet), \kappa(F(y), 1-F(y))} \circ \ell_{t}= \chi_{\varphi(\bullet, t), \kappa(F(y), 1-F(y))} \circ \ell_{\eta'}.$$
It follows from Lemma \ref{relative homotopy} that
\begin{equation} \label{v}
\chi_{\varphi(\bullet, t), \kappa(F(y), 1-F(y))} \circ \ell_{\eta'} \sim \alpha^{-1} \ast (\chi_{\varphi(\bullet, t'), \kappa(F(y), 1-F(y))} \circ \ell_{\eta'}) \ast \alpha'.
\end{equation}
We prove that $\alpha$ is included in $V$ and that $\alpha'$ is included in $W$. We present here the proof for the first claim since the proof for the second is dual. Any point of $\alpha$ is of the form $H(F(y), \varphi(\eta', t''))$ with $t'' \in [t', t]$ where $(\eta', t'') \in B((\eta,t), r)$ since
$$d((\eta', t''), (\eta, t))=\sqrt{(\eta-\eta')^{2} + (t-t'')^{2}} \leq \sqrt{(\eta-\eta')^{2}+ (t-t')^{2}}< r.$$
This implies that 
$$\varphi(\eta', t'') \in \varphi(B((\eta,t), r)) \subseteq \varphi(\varphi^{-1}(B \cap B')) \subseteq \varphi(\varphi^{-1}(B))=B.$$
But $F(y) \in A$, therefore $(F(y), \varphi(\eta', t'')) \in A \times B \subseteq H^{-1}(V)$, and then $H(F(y), \varphi(\eta', t'')) \in H(H^{-1}(V))=V$. Now the facts that $\alpha^{-1} \subseteq V$ and that $\alpha' \subseteq W$ together with (\ref{v}) imply that
\begin{equation} \label{vh}
[\chi_{\varphi(\bullet, t), \kappa(F(y), 1-F(y))} \circ \ell_{\eta'} ] \in N([\chi_{\varphi(\bullet, t'), \kappa(F(y), 1-F(y))} \circ \ell_{\eta'}], \mathcal{U}, V, W).
\end{equation}
Further, under the assumption that $\eta'<\eta$, Lemma \ref{path-res} implies that the restrictions $\beta$ and $\beta'$ of respectively $H \circ \varphi(\bullet, t)_{F(y)}$ and $H \circ \varphi(\bullet, t)_{1-F(y)}$ on $[\eta', \eta]$ are homotopic with each other, and the path which connects $H(F(y), \varphi(\eta',t))$ with $H(1-F(y), \varphi(\eta',t))$ is $\chi_{\varphi(\bullet, t), \kappa(F(y), 1-F(y))} \circ \ell_{\eta'}$, and the path which connects $H(F(y), \varphi(\eta, t))$ with $H(1-F(y), \varphi(\eta, t))$ is $\chi_{\varphi(\bullet, t), \kappa(F(y), 1-F(y))} \circ \ell_{\eta}$. Since
$$(\chi_{\varphi(\bullet, t), \kappa(F(y), 1-F(y))} \circ \ell_{\eta'})^{-1} \ast \beta \ast (\chi_{\varphi(\bullet, t), \kappa(F(y), 1-F(y))} \circ \ell_{\eta}) \sim \beta',$$
it follows that
\begin{equation} \label{h}
\beta \ast (\chi_{\varphi(\bullet, t), \kappa(F(y), 1-F(y))} \circ \ell_{\eta}) \ast \beta'^{-1} \sim \chi_{\varphi(\bullet, t), \kappa(F(y), 1-F(y))} \circ \ell_{\eta'}.
\end{equation}
Also we have that $\beta$ is included in $V$ and $\beta'$ is included in $W$. We present here the proof for $\beta'$ since the proof for $\beta$ is dual and similar to the proof that $\alpha$ is included in $V$. Any point on $\beta'$ has the form $H(1-F(y), \varphi(\eta'', t))$ with $\eta'' \in [\eta', \eta]$ and $(\eta'', t) \in B((\eta, t),r)$ since
$$d((\eta'', t), (\eta, t))=\sqrt{(\eta-\eta'')^{2}+(t-t)^{2}}\leq \sqrt{(\eta-\eta')^{2}+(t-t')^{2}}<r.$$
This implies that
$$\varphi(\eta'', t) \in \varphi (B((\eta, t),r)) \subseteq \varphi (\varphi^{-1}(B\cap B'))\subseteq \varphi (\varphi^{-1}(B'))=B'.$$
This, together with $1-F(y) \in A'$, imply that $(1-F(y), \varphi(\eta'', t)) \in A' \times B' \subseteq H^{-1}(W)$, consequently $H(1-F(y), \varphi(\eta'', t)) \in H(H^{-1}(W))=W$. Now (\ref{h}) and the facts that $\beta \subseteq V$ and $\beta' \subseteq W$ imply that 
\begin{equation} \label{hh}
[\chi_{\varphi(\bullet, t), \kappa(F(y), 1-F(y))} \circ \ell_{\eta'}] \in N([\chi_{\varphi(\bullet, t), \kappa(F(y), 1-F(y))} \circ \ell_{\eta}], \mathcal{U}, V, W).
\end{equation}
Now (\ref{vh}), (\ref{hh}) and Lemma 2.3 of \cite{LT} imply (\ref{tf}) as desired.

To check the functoriality of $[F]$ we first see that $[F]$ sends identities to identities. Indeed, for every object $(y, (z,\beta))$,
\begin{align*}
[F](e_{(y, (z,\beta))})&=[F](y, [c_{(z,\beta)}])\\
&=[\tilde{\chi}_{c_{(z,\beta)}, \kappa( F(y),1-F(y))}]\\
&=[c_{[p_{F,(y,(z,\beta))}]}] && (\text{by Lemma \ref{constant}})\\
&=[c_{[F](y, (z,\beta))}] && (\text{by the definition})\\
&=e_{[F](y, (z,\beta))}
\end{align*} 
Let now $[\gamma], [\delta] \in \pi^{l}(X \times J)$ such that $[\gamma] \ast [\delta]$ exists, then for every $y \in X$, $(y, [\gamma]) \ast (y, [\delta])=(y, [\gamma] \ast [\delta])=(y, [\gamma \ast \delta])$. Let us check that
$[F](y,[\gamma])=[\tilde{\chi}_{\gamma, \kappa(F(y),1-F(y))}]$ is composable with $[F](y,[\delta])=[\tilde{\chi}_{\delta, \kappa(F(y),1-F(y))}]$. This follows if we prove that $(\tilde{\chi}_{\gamma, \kappa(F(y),1-F(y))})(1)=(\tilde{\chi}_{\delta, \kappa(F(y),1-F(y))})(0)$, or equivalently that $\chi_{\gamma, \kappa(F(y),1-F(y))} \circ \ell_{1}=\chi_{\delta, \kappa(F(y),1-F(y))} \circ \ell_{0}$. This is indeed so since for every $x \in I$ we have that
\begin{align*}
(\chi_{\gamma, \kappa(F(y),1-F(y))} \circ \ell_{1})(x)&=H(\kappa(F(y),1-F(y))(x), \gamma(1))\\
&=H(\kappa(F(y),1-F(y))(x), \delta(0)) && (\text{$\gamma \ast \delta$ exists})\\
&= (\chi_{\delta, \kappa(F(y),1-F(y))} \circ \ell_{0})(x).
\end{align*}
Now we have to prove that 
$$[F](y, [\gamma]) \ast [F](y, [\delta])=[F](y, [\gamma \ast \delta]),$$
or equivalently that
$$[\tilde{\chi}_{\gamma, \kappa(F(y),1-F(y))}] \ast [\tilde{\chi}_{\delta, \kappa(F(y),1-F(y))}]=[\tilde{\chi}_{\gamma \ast \delta, \kappa(F(y),1-F(y))}].$$
We prove this by proving that
$$(\tilde{\chi}_{\gamma, \kappa(F(y),1-F(y))}) \ast (\tilde{\chi}_{\delta, \kappa(F(y),1-F(y))})=\tilde{\chi}_{(\gamma \ast \delta), \kappa(F(y),1-F(y))}.$$
The path in the left hand side of the above is given by
$$(\tilde{\chi}_{\gamma, \kappa(F(y),1-F(y))} \ast \tilde{\chi}_{\delta, \kappa(F(y),1-F(y))})(\eta)=\left\lbrace \begin{matrix}
	[\chi_{\gamma, \kappa(F(y),1-F(y))} \circ \ell_{2\eta}] & if & 0 \leq \eta \leq 1/2\\
	[\chi_{\delta, \kappa(F(y),1-F(y))} \circ \ell_{2\eta-1}] & if & 1/2 \leq \eta \leq 1
\end{matrix} \right.$$
Observe that for every $x \in I$, if $0\leq \eta \leq 1/2$, then
$$(\chi_{\gamma, \kappa(F(y),1-F(y))} \circ \ell_{2\eta})(x)=H(\kappa(F(y),1-F(y))(x), \gamma (2\eta)),$$
and when $1/2 \leq \eta \leq 1$, then
$$(\chi_{\delta, \kappa(F(y),1-F(y))} \circ \ell_{2\eta-1})(x)=H(\kappa(F(y),1-F(y))(x), \delta(2\eta -1)).$$
On the other hand, since
$$(\tilde{\chi}_{(\gamma \ast \delta), \kappa(F(y),1-F(y))})(\eta)=[\chi_{(\gamma \ast \delta), \kappa(F(y),1-F(y))} \circ \ell_{\eta}],$$
we have that for every $x \in I$,
$$(\chi_{(\gamma \ast \delta), \kappa(F(y),1-F(y))} \circ \ell_{\eta})(x)=H(\kappa(F(y),1-F(y))(x), (\gamma \ast \delta)(\eta)).$$
It follows that for $0 \leq \eta \leq 1/2$,
$$(\chi_{(\gamma \ast \delta), \kappa(F(y),1-F(y))} \circ \ell_{\eta})(x)=H(\kappa(F(y),1-F(y))(x), \gamma (2\eta)),$$
and for $1/2 \leq \eta \leq 1$,
$$(\chi_{(\gamma \ast \delta), \kappa(F(y),1-F(y))} \circ \ell_{\eta})(x)=H(\kappa(F(y),1-F(y))(x), \delta (2\eta-1)).$$
By comparing we get the desired equality.
\end{proof}

\begin{theorem} \label{main}
\begin{itemize}
	\item [(i)] For every two fuzzy subsets $F$ and $G$, $\pi(\boldsymbol{\iota}) \circ [F]=[G]$ if and only if $G=1-F$. 
	\item[(ii)] For every fuzzy subset $F$, for every $y \in X$ and every constant path $c_{(z,\beta)}$ at some $(z,\beta)$ we have in particular that $[1-F](x, [c_{(z,\beta)}])=([F](x, [c_{(z,\beta)}]))^{-1}$.
\end{itemize}
\end{theorem}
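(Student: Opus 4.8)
The plan is to reduce the whole statement to the single elementary identity $\kappa(s,t)(1-u)=\kappa(t,s)(u)$, which is immediate from $\kappa(s,t)(x)=(t-s)x+s$. From it I would extract two facts, one for objects and one for morphisms, that carry all the weight. Writing $p_{F}$ for $p_{F,(y,(z,\beta))}$, taking $s=F(y)$ and $t=1-F(y)$ gives $p_{F}^{-1}(u)=p_{F}(1-u)=H(\kappa(1-F(y),F(y))(u),(z,\beta))=p_{1-F}(u)$, so that $p_{1-F}=p_{F}^{-1}$ as paths in $X\times J$; call this $(\ast)$. For morphisms, Lemma~\ref{v-inv} with the same $s,t$ yields $(\chi_{\gamma,\kappa(F(y),1-F(y))}\circ\ell_{\eta})^{-1}=\chi_{\gamma,\kappa(1-F(y),F(y))}\circ\ell_{\eta}$, and applying $\boldsymbol{\iota}$ pointwise gives $\boldsymbol{\iota}\circ\tilde{\chi}_{\gamma,\kappa(F(y),1-F(y))}=\tilde{\chi}_{\gamma,\kappa(1-F(y),F(y))}$; call this $(\ast\ast)$.

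For the ``if'' part of (i) I would set $G=1-F$ and verify the two components of functor equality. On objects, $(\pi(\boldsymbol{\iota})\circ[F])(y,(z,\beta))=[p_{F}^{-1}]=[p_{1-F}]=[G](y,(z,\beta))$ by $(\ast)$. On morphisms, $(\pi(\boldsymbol{\iota})\circ[F])(y,[\gamma])=[\boldsymbol{\iota}\circ\tilde{\chi}_{\gamma,\kappa(F(y),1-F(y))}]=[\tilde{\chi}_{\gamma,\kappa(1-F(y),F(y))}]=[G](y,[\gamma])$ by $(\ast\ast)$ together with $\kappa(G(y),1-G(y))=\kappa(1-F(y),F(y))$. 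Hence $\pi(\boldsymbol{\iota})\circ[F]=[G]$.

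For the ``only if'' part I would use only agreement on objects: the hypothesis forces $[p_{F}^{-1}]=[p_{G}]$ for every object, and since equal homotopy classes of paths share endpoints I compare initial points $p_{F}^{-1}(0)=p_{F}(1)=H(1-F(y),(z,\beta))=(z,F(y)\beta)$ and $p_{G}(0)=H(G(y),(z,\beta))=(z,(1-G(y))\beta)$. Choosing an object with $\beta\neq 0$, which exists because $J=[0,1)$, gives $F(y)=1-G(y)$ for all $y$, that is $G=1-F$. Part (ii) is then the evaluation of (i) at the units $(x,[c_{(z,\beta)}])$: by Lemma~\ref{constant} both $[F](x,[c_{(z,\beta)}])$ and $[1-F](x,[c_{(z,\beta)}])$ are the unit morphisms at $[p_{F,(x,(z,\beta))}]$ and $[p_{1-F,(x,(z,\beta))}]$ respectively, and $(\ast)$ identifies the second object with $\boldsymbol{\iota}[p_{F,(x,(z,\beta))}]=[p_{F,(x,(z,\beta))}]^{-1}$.

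I do not anticipate a genuine obstacle: once $(\ast)$ and $(\ast\ast)$ are isolated the argument is bookkeeping. The only delicate points are that the morphism computation in the ``if'' direction rests entirely on Lemma~\ref{v-inv} (the reversal behaviour of $\chi$ under swapping the endpoints of $\kappa$), and that in (ii) the symbol $(\cdot)^{-1}$ must be read as the inverse induced by $\boldsymbol{\iota}$ on the underlying object rather than the groupoid inverse of a unit morphism, which would be the unit itself and would sit over the wrong object.
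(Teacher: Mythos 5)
Your proposal is correct and takes essentially the same route as the paper: both directions of (i) rest on the reversal identity of Lemma \ref{v-inv} (your $\kappa(s,t)(1-u)=\kappa(t,s)(u)$), the converse is settled by comparing initial points at an object with $\beta \neq 0$, and (ii) follows from Lemma \ref{constant} together with the path-reversal identification. The only differences are cosmetic and, if anything, in your favour: you verify the object component of the functor equality explicitly via $(\ast)$ (the paper checks only the morphism component, leaving objects implicit), and your uniqueness argument runs on objects rather than on the morphism paths evaluated at $\eta=0$, $x=0$ --- but these are literally the same paths, since $\chi_{\gamma,\kappa(F(y),1-F(y))}\circ \ell_{0}=p_{F,(y,\gamma(0))}$.
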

\begin{proof}
(i) Assume that $G=1-F$ and let $(y, [\gamma]) \in X \times \pi^{l}(X \times J)$ be arbitrary. Then,
$$(\pi(\boldsymbol{\iota}) \circ [F])(y, [\gamma])=\pi(\boldsymbol{\iota})([\tilde{\chi}_{\gamma, \kappa(F(y),1-F(y))}])=[\boldsymbol{\iota} \circ \tilde{\chi}_{\gamma, \kappa(F(y),1-F(y))}],$$
and $[1-F](y, [\gamma])=[\tilde{\chi}_{\gamma, \kappa(1-F(y),F(y))}]$. The claim follows if prove that for every $\eta \in I$,
$$(\boldsymbol{\iota} \circ \tilde{\chi}_{\gamma, \kappa(F(y),1-F(y))})(\eta)=(\tilde{\chi}_{\gamma, \kappa(1-F(y),F(y))})(\eta).$$
But on the one side
$$(\boldsymbol{\iota} \circ \tilde{\chi}_{\gamma, \kappa(F(y),1-F(y))})(\eta)=\boldsymbol{\iota}([\chi_{\gamma , \kappa(F(y),1-F(y))} \circ \ell_{\eta}])=[(\chi_{\gamma , \kappa(F(y),1-F(y))} \circ \ell_{\eta})^{-1}],$$
and on the other side
$$(\tilde{\chi}_{\gamma, \kappa(1-F(y),F(y))})(\eta)=[\chi_{\gamma , \kappa(1-F(y),F(y))} \circ \ell_{\eta}].$$
From Lemma \ref{v-inv} we have that $(\chi_{\gamma , \kappa(F(y),1-F(y))} \circ \ell_{\eta})^{-1}=\chi_{\gamma , \kappa(1-F(y),F(y))} \circ \ell_{\eta}$, therefore the equality follows.

Conversely, assume that $\pi(\boldsymbol{\iota}) \circ [F]=[G]$, therefore for every $(y, [\gamma]) \in X \times \pi^{l}(X \times J)$ we have that
$$[\tilde{\chi}_{\gamma, \kappa(G(y),1-G(y))}]=[G](y, [\gamma])=(\pi(\boldsymbol{\iota}) \circ [F])(y, [\gamma])=[\boldsymbol{\iota} \circ \tilde{\chi}_{\gamma, \kappa(F(y),1-F(y))}].$$
This implies that for $\eta=0$ in particular we have,
$$[\chi_{\gamma , \kappa(G(y),1-G(y))} \circ \ell_{0}]=(\tilde{\chi}_{\gamma, \kappa(G(y),1-G(y))})(0)=(\boldsymbol{\iota} \circ \tilde{\chi}_{\gamma, \kappa(F(y),1-F(y))})(0)=[(\chi_{\gamma , \kappa(F(y),1-F(y))} \circ \ell_{0})^{-1}].$$
Taking $x=0$ we obtain
\begin{align*}
H(G(y), \gamma(0))&=H(\kappa(G(y),1-G(y))(0), \gamma(0))\\
&=(\chi_{\gamma , \kappa(G(y),1-G(y))} \circ \ell_{0})(0)\\
&= (\chi_{\gamma , \kappa(F(y),1-F(y))} \circ \ell_{0})^{-1}(0)\\
&= (\chi_{\gamma , \kappa(1-F(y),F(y))} \circ \ell_{0})(0) && (\text{from Lemma \ref{v-inv}})\\
&= H(1-F(y), \gamma(0)).
\end{align*}
Since this holds true for every path $\gamma$, we can chose it to satisfy $\gamma(0)=(z,\beta)$ with $\beta \neq 0$, hence we have $(1-G(y))\beta=F(y)\beta$. This implies that for every $y \in X$, $G(y)=1-F(y)$ therefore $G=1-F$.\\
(ii) Combining part (i) of the theorem together with Lemma \ref{constant} we have the following
\begin{align*}
[1-F](y, [c_{(z,\beta)}])&=\pi(\boldsymbol{\iota})([F](y, [c_{(z,\beta)}])) && (\text{by part (i) of the theorem})\\
&=\pi(\boldsymbol{\iota})([\tilde{\chi}_{c_{(z,\beta)}, \kappa(F(y), 1-F(y))}])\\
&= \pi(\boldsymbol{\iota})[c_{[p_{F, (y,(z, \beta))}]}] && (\text{by Lemma \ref{constant}})\\
&= [c_{[p_{F, (y,(z, \beta))}]}]^{-1} && (\text{definition of $\pi(\boldsymbol{\iota})$})\\
&= ([F](y, [c_{(z,\beta)}]))^{-1} && (\text{by Lemma \ref{constant}}.)
\end{align*}
This completes the proof.
\end{proof}


\begin{thebibliography}{99}



\bibitem{Br} Brodskiy, N., Dydac, J., Labuz, B., Mitra, A., \textit{Covering maps for locally path-connected
spaces}, Fundamenta Mathematicae 218 (2012)
	
\bibitem{Brown}  Brown, R., \textit{TOPOLOGY AND
GROUPOIDS: A Geometric Account of General Topology,
Homotopy Types and the Fundamental Groupoid}, www.groupoids.org.uk, Deganwy, United Kingdom	
		
\bibitem{FTS} Chang C.L., \textit{Fuzzy Topological Spaces}, JOURNAL OF MATHEMATICAL ANALYSIS AND APPLICATIONS 24, 182-190 (1968)

	
\bibitem{Lowen} Lowen, R., \textit{Fuzzy Topological Spaces and Fuzzy Compactness}, JOURNAL OF MATHEMATICAL ANALYSIS AND APPLICATIONS 56, 621-633 (1976)
		

	
\bibitem{LT} Pakdaman, A., Shahini, F., \textit{The fundamental groupoid as a topological groupoid: 
\\ Lasso topology}, Topology and its Applications, Vol. 302, 2021, 107833

		
\bibitem{Rotman-T}  Rotman, J.J., \textit{An Introduction to Algebraic Topology}, Springer Verlag 1988		


\bibitem{Z} L.A. Zadeh, \textit{Fuzzy sets}, Inform. and Comput. 8 (1965), 338-353

	\end{thebibliography}
\end{document}